\DeclareMathAlphabet{\mathpzc}{OT1}{pzc}{m}{it}
\numberwithin{equation}{section}
\theoremstyle{plain}
\newtheorem{thm}{Theorem}[section]
\newtheorem{lem}[thm]{Lemma}
\newtheorem{prop}[thm]{Proposition}
\theoremstyle{definition}
\newtheorem{defn}{Definition}[section]
\newtheorem{ntz}{Notation}[section]
\newtheorem{rmk}[thm]{Remark}
\newcommand\opa{\bar{\partial}}
\numberwithin{equation}{section}
\newcommand\C{\mathbb{C}}
\newcommand\R{\mathbb{R}}
\newcommand\Z{\mathbb{Z}}
\newcommand\PP{\mathbb{P}}
\newcommand\Le{\mathscr{L}}
\newcommand\sfW{\textsf{W}}
\newcommand\Tq{\mathpzc{T}}
\newcommand\cO{\mathscr{O}}
\newcommand\Qq{\mathpzc{Q}}
\newcommand\CP{\mathbb{CP}}
\newcommand\Gr{\mathpzc{Gr}}
\newcommand\Fq{\mathpzc{F}}
\newcommand\Pro{\mathbb{P}}
\newcommand\Id{\mathrm{I}}
\newcommand\Ci{\mathcal{C}^\infty}
\newcommand\Cc{\mathcal{C}}
\newcommand\Xs{\textsf{X}}
\newcommand\Bs{\textsf{B}}
\newcommand\sfM{\textsf{M}}
\newcommand\Ta{\mathrm{T}}
\newcommand\Tas{\mathrm{T}^{\ast}{}}
\newcommand\Ha{\mathrm{H}}
\newcommand\CT{\mathbb{C}\mathrm{T}}
\newcommand\Uq{\mathpzc{U}}
\newcommand\Mon{\mathpzc{Mon}}
\newcommand\Jd{\mathrm{J}}
\newcommand\epi{\epsilonup}
\newcommand\pct{\mathrm{p}}
\newcommand\qct{\mathrm{q}}
\newcommand\pq{\mathpzc{p}}
\newcommand\qq{\mathpzc{q}}
\newcommand\cI{\mathcal{I}}
\newcommand\cN{\mathcal{N}}
\newcommand\fq{\mathpzc{f}}
\newcommand\gq{\mathpzc{g}}
\newcommand\vq{\mathpzc{v}}
\newcommand\wq{\mathpzc{w}}
\newcommand\uq{\mathpzc{u}}
\newcommand\sfd{\textsf{d}}
\newcommand\cq{\mathpzc{c}}
\newcommand\Hi{\mathpzc{H}}
\newcommand\Ai{\mathcal{A}}
\newcommand\Ft{\texttt{F}}
\newcommand\Qt{\texttt{Q}}
\newcommand\Kt{\texttt{K}}
\newcommand\Et{\texttt{E}}
\newcommand\Eq{\mathpzc{E}}
\newcommand\Nt{\texttt{N}}
\newcommand\Kq{\mathpzc{K}}
\newcommand\Vs{\textsf{V}}
\newcommand\ttt{\texttt{t}}
\newcommand\id{\mathrm{id}}
\newcommand\Gf{\mathbf{G}}
\newcommand\sq{\mathpzc{s}}
\newcommand\Ot{\Theta}
\newcommand\supp{\mathrm{supp}}
\newcommand\sfX{\textsf{X}}
   \def\DHLhksqrt#1#2{\setbox0=\hbox{$#1\sqrt{#2\,}$}\dimen0=\ht0
     \advance\dimen0-0.2\ht0
     \setbox2=\hbox{\vrule height\ht0 depth -\dimen0}
     {\box0\lower0.4pt\box2}}
\begin{document}

\title{A stability theorem for projective CR manifolds}

\author[J.~Brinkschulte]{Judith Brinkschulte}
\address{J.\ Brinkschulte:
Mathematisches Institut\\ Universit\"at Leipzig\\
Augustusplatz 10/11\\ 04109 Leipzig (Germany)}
\email{brinkschulte@math.uni-leipzig.de}

\author[C.D.~Hill]{C. Denson Hill}
\address{C.D.\ Hill:
Department of Mathematics\\ Stony Brook University
\\ Stony Brook, N.Y. 11794 (USA)}
\email{dhill@math.stonybrook.edu}

\author[M.~Nacinovich]{Mauro Nacinovich}
\address{M.\ Nacinovich:
Dipartimento di Matematica\\ II Universit\`a di Roma
``Tor Ver\-ga\-ta''\\ Via della Ricerca Scientifica\\ 00133 Roma
(Italy)}
\email{nacinovi@mat.uniroma2.it}

\maketitle

\vspace{0.5cm}

\begin{abstract}
We consider smooth deformations of the $CR$ structure of
a smooth  $2$-pseudoconcave compact $CR$ 
submanifold $\sfM$ of a reduced complex analytic variety
$\sfX$ outside the intersection 
$D\,{\cap}\,\sfM$ 
with the support $D$ 
of a Cartier divisor of a positive line bundle $\Ft_{\sfX}.$
We show  that nearby structures still
admit projective $CR$ embeddings. 
Special results are obtained under the additional assumptions that 
$\sfX$ is a projective space or a Fano variety.
\end{abstract}
\tableofcontents
\vspace{0.5cm}

\section{Introduction}

 In this paper we study projective $CR$ manifolds, meaning those that have a $CR$ embedding into some $\C\PP^N$. We
consider a small $CR$ deformation of such an $\sfM$, and investigate under what conditions the
deformed $CR$ manifold still has a projective $CR$ embedding. 
  Our main result is the following $CR$-embedding theorem.

\begin{thm} \label{main}
Let $\sfX$ be an $(n{+}k)$-dimensional  
reduced complex analytic variety and $D$ the support of 
a Cartier divisor of a positive line bundle $\Ft_{\sfX}.$ 
\par 
Let $\sfM$ be a smooth compact submanifold of real dimension $2n{+}k$ of $\sfX_{\text{reg}}$ 
and  $\{\sfM_{\ttt}\}_{\ttt\in\R}$ a  family of $CR$ structures of type $(n,k)$
on $\sfM,$ 
smoothly depending on a real parameter $\ttt.$ We assume that 
\begin{enumerate}
 \item 
 $\sfM_{0}$ is induced by the embedding $\sfM\,{\hookrightarrow}\sfX$;
 \item
 $\sfM_{0}$ is $2$-pseudoconcave;
 \item 
the $CR$ structures of all $\sfM_{\ttt}$ agree to infinite order on $\sfM\,{\cap}D$. 
\end{enumerate}
Then we can find $\epi_{0}{>}0$ such that, for every $|\ttt|\,{<}\,\epi_{0},$
$\sfM_{\ttt}$ admits a generic
$CR$ embedding into a projective complex variety $\sfX_{\ttt}.$ 
\end{thm}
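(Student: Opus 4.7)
The strategy is to produce, for each sufficiently small $\ttt$, enough CR sections of a suitable positive CR line bundle on $\sfM_\ttt$ to realize a generic projective CR embedding, and to take $\sfX_\ttt$ to be the Zariski closure of the image in the ambient $\CP^N$. Set $\Lf=\Ft_\sfX|_\sfM$, a CR line bundle on $\sfM_0$; for $m$ large enough the restrictions to $\sfM_0$ of a basis of global sections of $\Ft_\sfX^{\otimes m}$ give a generic CR embedding of $\sfM_0$ into some $\CP^N$. Hypothesis (3) guarantees that $\Lf$ remains CR for every $\sfM_\ttt$ to infinite order on a neighborhood of $\sfM\cap D$, and that the original sections are CR there modulo flat errors. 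Away from $D$ the CR structure genuinely deforms, but a standard Kodaira-Spencer argument produces a smooth family of CR line bundles $\Lf_\ttt$ on $\sfM_\ttt$ with $\Lf_0=\Lf$.

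The core of the argument is a stability theorem for CR sections: for each section $s$ in the chosen linear system on $\sfM_0$, construct a CR section $s_\ttt$ of $\Lf_\ttt^{\otimes m}$ on $\sfM_\ttt$ that agrees with $s$ to infinite order on $\sfM\cap D$ and converges to $s$ as $\ttt\to 0$. Writing $s_\ttt=s+u_\ttt$ with $u_\ttt$ flat on $\sfM\cap D$, the CR condition reduces to a tangential Cauchy-Riemann equation
\begin{equation*}
\bar\partial_{b,\ttt}\, u_\ttt \,=\, f_\ttt\,,
\end{equation*}
where $f_\ttt$ measures the defect of $s$ to be CR for the new structure; by hypothesis (3), $f_\ttt$ is flat on $\sfM\cap D$ and of order $\ttt$ globally. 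The $2$-pseudoconcavity of $\sfM_0$ is inherited by $\sfM_\ttt$ for $|\ttt|$ small, and yields finite-dimensionality and upper semicontinuity in $\ttt$ of the obstruction group $H^1(\sfM_\ttt,\cO(\Lf_\ttt^{\otimes m}))$. Combined with the vanishing of the obstruction (achievable for $m$ large by Kodaira-type arguments applied to the positive bundle $\Ft_\sfX$, and, if needed, by perturbing $s$ within the linear system), the equation is solvable with smooth dependence on $\ttt$.

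Once such $s_i^\ttt$ are produced, they depend smoothly on $\ttt$ and reduce to the $s_i$ at $\ttt=0$; by continuity, the projective CR map they define remains a generic CR embedding for $|\ttt|<\epi_0$, and the Zariski closure of its image in $\CP^N$ furnishes the required reduced projective variety $\sfX_\ttt$. The main obstacle is the stability step itself: solving the tangential Cauchy-Riemann equation on the $2$-pseudoconcave $\sfM_\ttt$ with prescribed flat behavior on $\sfM\cap D$ and with uniform control in $\ttt$. This will require sharp Hodge-type estimates for $\bar\partial_b$ coupled to $\Lf_\ttt^{\otimes m}$, the Kodaira-Spencer deformation of $\Lf_\ttt$, and a Moser-type iteration exploiting the infinite-order matching on $\sfM\cap D$ to absorb small errors. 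Once this analytic core is established, the projective embedding of $\sfM_\ttt$ follows by standard arguments.
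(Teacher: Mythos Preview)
Your overall strategy---correct the embedding sections of a high power of the positive bundle by solving tangential Cauchy--Riemann equations, with uniform control in $\ttt$---matches the paper's. But there is a genuine gap in the analytic core. You propose to solve $\bar\partial_{b,\ttt}u_\ttt=f_\ttt$ directly in the \emph{positive} bundle $\Lf_\ttt^{\otimes m}$, invoking ``Kodaira-type arguments'' for the vanishing of $H^{0,1}(\sfM_\ttt,\Lf_\ttt^{\otimes m})$. On a $2$-pseudoconcave $CR$ submanifold this is the wrong direction: the Andreotti--Grauert vanishing available through the $(n{+}k{-}2)$-pseudoconcave tubular neighborhood yields $H^{0,1}(\sfM_\ttt,\Ft_\sfM^{-\ell})=0$ for large $\ell$, i.e.\ for \emph{negative} powers of the positive bundle. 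There is no general vanishing for positive powers in degree one here, and ``perturbing $s$ within the linear system'' does not help, since you need \emph{every} section of the embedding system to admit a correction. The paper circumvents this by a two-step trick you are missing: first it corrects $\sigma^{-\ell}$ (a section of $\Ft^{-\ell}$ on $\sfM\setminus D$, with $\bar\partial_{\sfM_\ttt}$ extending smoothly by zero across $D$) to obtain a $CR$ section, then inverts to get a $CR$ section $\eta_\ttt$ of $\Ft^{\ell}$ on $\sfM_\ttt$ cutting out $D$; second, it writes each corrected embedding section as $\sigma_{i,\ttt}=\sigma_i-u_{i,\ttt}\,\eta_\ttt^{2}$, where $u_{i,\ttt}$ solves a $\bar\partial$-equation in $\Ft^{-\ell}$ (the factor $\eta_\ttt^{2}$ transports between $\Ft^{-\ell}$ and $\Ft^{\ell}$, and the data $\bar\partial_{\sfM_\ttt}(\eta_\ttt^{-2}\sigma_i)$ extends smoothly across $D$ by the infinite-order agreement hypothesis). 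All the $\bar\partial$-solving thus happens in $\Ft^{-\ell}$, where the vanishing and the uniform subelliptic estimates are actually available. No Moser iteration is needed; the estimates are direct.

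A second, smaller gap: you assert that the Zariski closure of the image in $\CP^N$ is the required $\sfX_\ttt$, but you do not say why it has complex dimension $n{+}k$---which is exactly what ``generic'' requires. The paper closes this by invoking the bound on the transcendence degree of the field of $CR$ meromorphic functions on a pseudoconcave $CR$ manifold of type $(n,k)$, which forces any projective $CR$ embedding to factor through an $(n{+}k)$-dimensional projective variety.
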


More precise results are obtained under the additional assumptions that 
$\sfX$ is a projective space or a Fano variety (Theorems \ref{projective} and \ref{Fano}).\par
The example of \S\ref{secex} shows that an assumption of $1$-pseudoconcavity
on $\sfM_{0}$ is not sufficient to obtain the statement of Theorem \ref{main}. \\

 Precise definitions and explanation of terms used
will be found in the following sections; let us start by explaining how our main theorem relates to
other results in the field of CR geometry or analysis on CR manifolds.\\

     The notion of a $CR$ structure of type $(n,k)$ on a differentiable manifold  $\sfM$ of real dimension
$2n{+}k$ arises very naturally in two, a priori different, contexts. One is quite geometric in nature, and
the other is more analytical, being connected with fundamental questions about partial differential
equations.\par

     From the geometric perspective, a $CR$ structure of type $(n,k)$ on a smooth real submanifold $\sfM$ of dimension $2n{+}k$ of some complex manifold $\sfX$ of complex dimension $n{+}k$ is induced as the tangential part of the ambient complex structure on $\sfX$. In particular, one has the tangential Cauchy-Riemann equations on
$\sfM$.   In this situation $\sfM$ is said to be generically $CR$
embedded in $\sfX$. It is also possible that
$\sfM$ might be embedded in a complex manifold $X$ of complex dimension greater than $n{+}k$, with
the $CR$ structure on $\sfM$ also being induced as the tangential part of the ambient complex structure
in $\sfX$. In this situation the real codimension of $\sfM$ is greater than the $CR$ dimension $k$, and $\sfM$
is said to be non generically $CR$ embedded in $\sfX$. This is an important distinction only globally,
because: if $\sfM$ can be locally non generically CR embedded, then $\sfM$ can be locally generically
$CR$ embedded (see e.g. \cite{HT}).\par

     From the PDE perspective, one begins with (at least locally) a system of n smooth complex vector
fields defined on $\sfM$, which are linearly independent over $\mathbb{C}$, 
and satisfy a formal integrability condition.
 Again $n$ is the $CR$ dimension, and $k$ is the $CR$ codimension. One then asks if
the given complex vector fields might represent (in a local basis) the tangential $CR$ equations on $\sfM$
given by some (local) $CR$ embedding of $\sfM$.  The criterion for this, is that one must be able to find
(locally) a maximal set of functionally independent characteristic coordinates, which means to find
$n{+}k$ independent complex valued functions that are solutions of the homogeneous system associated
to the $n$ complex vector fields. When the vector fields can be chosen to have real analytic coefficients,
then this is possible (\cite{AH}). Hence locally, in the real analytic case, the geometric perspective and the
analytical PDE perspective are equivalent. The famous example of L. Nirenberg \cite{Ni} showed that the
two perspectives are not locally equivalent if the coefficients of the vector fields are assumed to be
only smooth. Why does this matter? From an analytical PDE viewpoint,
it is too restrictive to allow only real analytic structures, one needs cut off functions, partitions
of unity; etc., in order to employ calculus and modern analysis. So from this perspective it is more
natural to define what is known as an abstract $CR$ manifold $\sfM$ of type $(n,k)$, by reformulating
the local vector field basis description given above  into a global and invariant version, and to do
so within the smooth category. Precise definitions and details are below.\par
 
     The example of Nirenberg \cite{Ni} was a single complex vector field in $\mathbb{R}^3$, 
     so it endows $\mathbb{R}^3$ with the
structure of an abstract $CR$ manifold of type $(1,1)$. To make matters worse, the complex vector field of
Nirenberg's example is a small smooth perturbation, near some point, of the tangential Cauchy
Riemann equation to the unit sphere in $\mathbb{C}^2$. In other words, the generically $CR$ embedded and real
analytic $CR$ structure of type $(1,1)$ on the sphere $S^3$, inherited from $\mathbb{C}^2$, becomes not even locally
$CR$ embeddable near some point, if it is perturbed in a smooth way to obtain a very nearby abstract $CR$
structure of type $(1,1)$ on the same $S^3$. Here the original $CR$ structure on $S^3$ is strictly pseudoconvex,
and remains so under the small perturbation.\par

     Rossi \cite{R} constructed small real analytic deformations of the  $CR$ structure 
     on $S^3$ in $\mathbb{C}^2$ , such that
the resulting perturbed abstract $CR$ structures fail to $CR$ embed globally into $\mathbb{C}^2$, 
even though they do
locally %
$CR$ embed 
into~$\mathbb{C}^2$.  On the other hand, sticking with strictly pseudoconvex $CR$ structures
of hypersurface type, i.e. type $(n,1),$
 Boutet de Monvel proved that when $\sfM$ is compact and $n \,{>}\, 1$,
then $\sfM$ has a $CR$ embedding into $\mathbb{C}^N$ for some perhaps large $N$.  
Catlin and Lempert \cite{CL} showed that there are compact 
strictly pseudoconvex $CR$ manifolds of hypersurface type $(n,1)$, for $n \geq 1$,
that are $CR$ embedded into some $\mathbb{C}^N$, which admit small deformations that are also embeddable,  but
their embeddings cannot be chosen close to the original embedding. Thus these examples exhibit the
phenomenon of 
\textit{instability}
of the $CR$ embedding. On the other hand, for $n\, {=}\, 1$, 
Lempert \cite{L} proved that
if a compact strictly pseudoconvex $\sfM$ is $CR$ embeddable 
into %
$\mathbb{C}^2$, 
hence generically embedded, and if the $CR$ structure gets perturbed 
by a family of $CR$ embeddable $CR$ structures,   then
the embedding is 
\textit{stable}. %
This means that the embedding of the perturbed structure stays close to the
unperturbed one. Additional references to other work on the strictly pseudoconvex hypersurface
type case can be found in the above mentioned papers.\par

     In the higher codimensional case 
($k\,{>}\,1$)     
Polyakov \cite{P} proved a stability theorem for compact
3-pseudoconcave $CR$ manifolds $\sfM$, generically $CR$ embedded in a complex manifold, under
some additional hypotheses, such as the vanishing of a certain cohomology group.  The notion of
flexible/inflexible, which is closely related to the notion of stability/instability, was introduced in
 \cite{BH1}, also for the higher codimension case, where the emphasis was on $CR$ manifolds $\sfM$ that are not
compact. A $CR$ manifold $\sfM$ of type $(n,k)$, generically $CR$ embedded in $\mathbb{C}^{n+k}$, is called flexible
if it admits a compactly supported $CR$ deformation whose %
 deformed structure is no
longer $CR$ embeddable in $\mathbb{C}^{n+k}$; in other words, the deformation causes the $CR$ structure to
{\textquotedblleft{flex out}\textquotedblright}
of the space 
where the original manifold lies. %
On the other hand, $\sfM$ is inflexible
if any compactly supported deformation stays in the 
class of manifolds globally $CR$ embeddable into $\C^{n+k}.$ %
In \cite{BH1}, \cite{BH2} it was proved that if $\sfM$ is a  $CR$ submanifold of type $(n,k)$ in $\mathbb{C}^{n+k}$, 
and it is
only 2-pseudoconcave, then M is inflexible. 
 This relates to the present paper as follows: If a $CR$ structure of a 
 2-pseudoconcave compact $CR$ manifold of a reduced complex analytic variety  
 gets perturbed by a smooth family of $CR$ structures that 
 {\textquotedblleft{glues to}\textquotedblright} the unperturbed structure 
 along a Cartier divisor of a positive line 
 bundle, then the perturbed $CR$ structures {\it do not} 
{\textquotedblleft{flex out}\textquotedblright} of the 
class %
of globally $CR$ embeddable $CR$ manifolds (Theorem~\ref{main}). Moreover, 
the embedding is stable if the unperturbed $CR$ manifold sits generically in a projective space (Theorem \ref{projective}).
\par
     
We emphasize that      
in this paper we discuss compact $CR$ manifolds and we allow higher
codimension and that we are interested in global results, not local ones.\par

\section{Preliminaries and notation}\label{secnot}
\subsection{$CR$ manifolds of type $(n,k)$} \label{subs2.1}
 Let $\sfM$ be a smooth manifold of real dimension $m$
and $n,k$ nonnegative integers with $2n{+}k{=}m.$ 
A $CR$ structure of type $(n,k)$ on $\sfM$ is a formally integrable distribution
$\Ta^{0,1}_{\sfM}$ 
of $n$-dimensional complex 
subspaces of the complexified tangent bundle $\CT_{\sfM}$ of $\sfM,$
transversal to~$\Ta_{\sfM}.$   
By this transversality condition,  the real parts of  vectors in $\Ta^{0,1}_{\sfM}$
are the elements of the \emph{vector $CR$ distribution} $\Ha_{\sfM}$ and 
for each $\pct\,{\in}\,\sfM$ and
$\vq\,{\in}\,\Ha_{\sfM,\pct}$ 
(subscript {\textquotedblleft{$\pct$}\textquotedblright} means 
 {\textquotedblleft{the fiber at $\pct$\textquotedblright})
there is a unique $\vq'\,{\in}\,\Ha_{\sfM,\pct}$ such that
$\vq{+}i\vq'\,{\in}\,\Ta^{0,1}_{\sfM{},{\pct}}.$ 
The correspondence $\vq{\to}\vq'$ defines a  bundle self
map $\Jd_{\sfM}\,{:}\,\Ha_{\sfM}\,{\to}\,\Ha_{\sfM}$ which yields a complex structure on the fibers of
$\Ha_{\sfM},$ as $\Jd_{\sfM}^{2}\,{=}\,{-}\Id_{\Ha_{\sfM}}.$ The formal integrability condition 
means that 
\begin{equation}\label{e2.1}
 [\Gamma^{\infty}(\sfM,\Ta^{0,1}_{\sfM}),\Gamma^{\infty}(\sfM,\Ta^{0,1}_{\sfM})]
 \subseteq\Gamma^{\infty}(\sfM,\Ta^{0,1}_{\sfM}),
\end{equation}
which is equivalent to the vanishing 
of the  
 Nijenhuis tensor on $\Ha_{\sfM}$, i.e. 
\begin{equation*}
 \Jd_{\sfM}([X,Y]-[\Jd_{\sfM}X,\Jd_{\sfM}Y])=[\Jd_{\sfM}X,Y]+[X,\Jd_{\sfM}Y],\;\;\forall \,X,Y\in
 \Gamma^{\infty}(\sfM,\Ha_{\sfM}).
\end{equation*}
 We use the standard notation 
\begin{align*}
 \Ta^{1,0}_{\sfM}&=\{X-i\Jd_{\sfM}X\mid X\in{\Ha}_{\sfM}\}\subset\C\Ta_{\sfM},\\
  \Ta^{0,1}_{\sfM}&=\{X+i\Jd_{\sfM}X\mid X\in{\Ha}_{\sfM}\}\subset\C\Ta_{\sfM},\\
  \Ta^{\ast}{}^{1,0}_{\sfM}&=\{\phiup\in{\CT}^{\ast}_{\sfM}\mid \phiup(Z)=0,\; 
  \forall Z\in\Gamma^{\infty}(\sfM,\Ta^{0,1}_{\sfM})\},\\
    \Ta^{\ast}{}^{0,1}_{\sfM}&=\{\phiup\in{\CT}^{\ast}_{\sfM}\mid \phiup(Z)=0,\; 
    \forall Z\in\Gamma^{\infty}(\sfM,\Ta^{1,0}_{\sfM})\},\\
    \Ha^{0}_{\sfM}&=\{\xiup\in{\Ta}^{\ast}_{\sfM}\mid \xiup(X)=0,\;\forall X\in\Gamma^{\infty}(\sfM,{\Ha}_{\sfM})\}.
    \intertext{We have}
 \Ta^{1,0}_{\sfM}\cap {\Ta}^{0,1}_{\sfM} & {=}\{0\},\;\;
  \Ta^{1,0}_{\sfM}{+} {\Ta}^{0,1}_{\sfM}  {=}\C\Ha_{\sfM},\;\;  
    \Ta^{\ast}{}^{1,0}_{\sfM}{\cap}  \Ta^{\ast}{}^{0,1}_{\sfM}{=}\C{{\Ha}}_{\sfM}^{0},\;\;
     \Ta^{\ast}{}^{1,0}_{\sfM}{+}  \Ta^{\ast}{}^{0,1}_{\sfM}{=} \C\Ta^{\ast}_{\sfM}.  
\end{align*}
\par\smallskip

Deformations of $CR$ structures were considered especially for the 
hy\-per\-sur\-face-type case $k=1$
\cite{Ak, DBM}. Since, by a theorem of Darboux (see \cite{D}),
$1$-codimensional contact structures 
are locally equivalent, deformations of non degenerate $CR$ hypersurfaces 
can be  described  by   smooth curves
$\{\ttt{\to}\Jd_{\ttt}\}$ of formally integrable complex structures on
a fixed contact distribution ${\Ha}_{\sfM}$ of 
a smooth real manifold $\sfM.$ 
Differently,  generalized contact distributions of higher codimension may
be not locally equivalent. Therefore, 
to describe general deformations of abstract
$CR$ manifolds of type $(n,k)$ with arbitrary $k$ 
it will be convenient to focus our attention on
their 
\textit{canonical bundles}, which carry comprehensive
information on both their contact and  partial complex structures,
as explained below. 
\par 

\smallskip

Denote by  
$\Ai_{\sfM}$ the sheaf of
Grassmann algebras of germs of 
complex valued smooth exterior differential forms on $\sfM.$ 
The subsheaf
$\Ai^{\qq}_{\sfM}$ of
homogeneous elements of degree $\qq$ consists of 
the germs of smooth 
sections of the  $\C$-vector bundle $\Lambda^{\qq}(\C\Ta^{\ast}_{\sfM})$ 
of complex $\qq$-covectors on~$M.$ 
 \par 
We can identify 
the Grassmannian  $\Gr_{\!\!n}^{\C}(\sfM)$ of $n$-dimensional complex subspaces
of $\C\Ta_{\sfM}$ with the subbundle of the  
projective bundle $\PP(\Lambda^{n+k}(\C\Ta^{\ast}_{\sfM}))$
whose representatives belong to the cone   
$\Mon(\Lambda^{n+k}(\CT^{\ast}M))$ of degree $n{+}k$ 
monomials\footnote{We recall that, if $\Vs$ is a vector space, the \emph{monomials of degree $\qq$} 
of its exterior algebra
$\Lambda^{*}\Vs$ are the exterior products $\vq_{1}{\wedge}\cdots{\wedge}\vq_{\qq}$ of
vectors $\vq_{1},\hdots,\vq_{\qq}$ of $\Vs.$}
of the Grassmann algebra
of $\CT^{\ast}M$ (see e.g. \cite{C}).
Then  a distribution of complex $n$-planes of $\CT_{\sfM}$ is the datum of 
a smooth complex line bundle $\Kt_{\sfM}$ on $\sfM,$ 
with $\Kt_{\sfM}\,{\subset}\,\Mon(\Lambda^{n+k}(\CT^{\ast}M)).$
\begin{defn} Let $(M,\Ta^{0,1}_{\sfM})$ be an abstract $CR$ manifold of type $(n,k).$ 
 The complex line bundle $\Kt_{\sfM}$ corresponding to the distribution $\Ta^{0,1}_{\sfM}$
 of its \textit{anti-holomorphic tangent vectors} is called its \emph{canonical bundle}. 
\end{defn}
\begin{ntz}
 Let us denote by $\Kq_{\sfM}$ the subsheaf of ideals in $\Ai_{\sfM}$ generated by the
 sheaf  ${\underline{\Kt}}{}_{\sfM}$ of germs of smooth sections of $\Kt_{\sfM}.$ 
\end{ntz}
\begin{prop} \label{defn2.1}
A $CR$ structure of type $(n,k)$ on $\sfM$ is completely determined by the
datum of 
a complex line bundle $\Kt_{\sfM}\,{\subset}\,\Mon(\Lambda^{n+k}(\CT^{\ast}_{\sfM}))$
such that
 the ideal sheaf 
 $\Kq_{\sfM}$  of $\Ai_{\sfM}$ generated by the germs its 
 smooth
 sections  satisfies 
the following conditions: 
\begin{itemize}
 \item[(1)] $\Kq_{\sfM}$ is \textit{closed}, i.e. 
 $d\Kq_{\sfM}\subseteq\Kq_{\sfM}$ \quad {(\textsc{formal integrability});}
 \item[(2)] $\{X\in\Gamma^{\infty}(M,{\Ta}_{\sfM})\,{\mid}\, X\,{\rfloor}\,\Kq_{\sfM}
 \,{\subseteq}\,\Kq_{\sfM}\}\,{=}\,\{0\}$\quad
 (\textsc{transversality to $\Ta_{\sfM}$}).
\end{itemize}
\par 
Indeed, when $(1)$ and $(2)$ are valid, 
$\Kt_{\sfM}$ is the canonical bundle of the $CR$ manifold $(\sfM,\Ta_{\sfM}^{0,1})$ 
with 
\begin{equation}
 \Gamma^{\infty}(\sfM,\Ta_{\sfM}^{0,1})=\{Z\in\Gamma^{\infty}(\sfM,\C\Ta_{\sfM})\mid
 Z\rfloor\Kq_{\sfM}\,\subseteq\,\Kq_{\sfM}\} 
\end{equation}
\end{prop}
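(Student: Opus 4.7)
The plan is to extract a rank-$n$ complex subbundle $\Ta^{0,1}_{\sfM}\subset\C\Ta_{\sfM}$ from the line bundle $\Kt_{\sfM}$ and to check that hypotheses (1) and (2) translate exactly into formal integrability and transversality of this distribution, so that $\Kt_{\sfM}$ becomes the canonical bundle of the $CR$ structure thus produced.

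The first step is local: a nowhere vanishing section $\omega$ of $\Kt_{\sfM}$ is, by the standing assumption $\Kt_{\sfM}\subset\Mon(\Lambda^{n+k}\CT^{\ast}_{\sfM})$, decomposable as $\omega=\xi_{1}\wedge\cdots\wedge\xi_{n+k}$, and so its interior-product annihilator $\bigcap_{i}\ker\xi_{i}=\{Z\in\C\Ta_{\sfM}\mid Z\rfloor\omega=0\}$ is a smooth rank-$n$ complex subbundle of $\C\Ta_{\sfM}$, unaffected by rescaling $\omega$ by a nowhere vanishing smooth function. Gluing these local kernels defines a global distribution $\Ta^{0,1}_{\sfM}$ of complex rank $n$. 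The key reformulation I would establish next is the equivalence
\begin{equation*}
Z\rfloor\omega=0\;\Longleftrightarrow\;Z\rfloor\Kq_{\sfM}\subseteq\Kq_{\sfM},
\end{equation*}
where $(\Rightarrow)$ follows from the Leibniz rule $Z\rfloor(\omega\wedge\beta)=(Z\rfloor\omega)\wedge\beta+(-1)^{n+k}\omega\wedge(Z\rfloor\beta)$, and $(\Leftarrow)$ uses that $Z\rfloor\omega$ has degree $n+k-1$, strictly below the lowest degree $n+k$ in which $\Kq_{\sfM}$ has nonzero sections, so $Z\rfloor\omega\in\Kq_{\sfM}$ forces it to vanish. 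This equivalence already yields the description of $\Gamma^{\infty}(\sfM,\Ta^{0,1}_{\sfM})$ claimed in the proposition.

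Transversality to $\Ta_{\sfM}$ is then immediate: a real $X\in\Ta_{\sfM}\cap\Ta^{0,1}_{\sfM}$ satisfies $X\rfloor\Kq_{\sfM}\subseteq\Kq_{\sfM}$ by the previous step, and hypothesis (2) forces $X=0$. For formal integrability I would combine Cartan's identity $\mathcal{L}_{Z}=d\circ(Z\rfloor\cdot)+(Z\rfloor\cdot)\circ d$ with the derivation rule $\mathcal{L}_{Z}(W\rfloor\alpha)=[Z,W]\rfloor\alpha+W\rfloor\mathcal{L}_{Z}\alpha$ to obtain
\begin{equation*}
[Z,W]\rfloor\alpha=d(Z\rfloor W\rfloor\alpha)+Z\rfloor d(W\rfloor\alpha)-W\rfloor d(Z\rfloor\alpha)-W\rfloor Z\rfloor d\alpha,
\end{equation*}
which I would apply with $\alpha\in\Kq_{\sfM}$ and $Z,W\in\Gamma^{\infty}(\sfM,\Ta^{0,1}_{\sfM})$: each $d(\cdots)$ lies in $\Kq_{\sfM}$ by hypothesis (1), and each interior product preserves $\Kq_{\sfM}$ by the equivalence above, so all four terms sit in $\Kq_{\sfM}$. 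Hence $[Z,W]\rfloor\Kq_{\sfM}\subseteq\Kq_{\sfM}$, so $[Z,W]\in\Gamma^{\infty}(\sfM,\Ta^{0,1}_{\sfM})$, proving \eqref{e2.1}. Finally, every local generator $\omega$ of $\Kt_{\sfM}$ is by construction a nowhere vanishing decomposable section of $\Lambda^{n+k}\Ta^{\ast}{}^{1,0}_{\sfM}$ for the $CR$ structure just produced, so $\Kt_{\sfM}$ is indeed its canonical bundle.

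The main delicate point is the degree argument in the equivalence of the second paragraph, which is what makes hypothesis (2) (a statement about real tangent vectors) strong enough to force transversality of the complex distribution $\Ta^{0,1}_{\sfM}$ to $\Ta_{\sfM}$; once that is set up, the rest reduces to propagating membership in $\Kq_{\sfM}$ through the interior and exterior derivatives via the Cartan-type identity of the third paragraph.
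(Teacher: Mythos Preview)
Your proof is correct and supplies in full the details that the paper's own proof only asserts in a single sentence (namely, that (1) is equivalent to formal integrability and (2) to transversality). Your degree argument for the equivalence $Z\rfloor\omega=0\Leftrightarrow Z\rfloor\Kq_{\sfM}\subseteq\Kq_{\sfM}$ and your use of the Cartan identity to propagate membership in $\Kq_{\sfM}$ through brackets are exactly the mechanisms underlying the paper's one-line claim, so the approaches coincide; you have simply carried out what the authors left implicit.
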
 
\begin{proof}
 In fact, $(1)$  is equivalent to the formal integrability \eqref{e2.1}
 and $(2)$ to the transversality of $\Ta^{0,1}_{\sfM}$ to the real tangent distribution
 $\Ta_{\sfM}.$ 
\end{proof}
By the Newlander-Nirenberg theorem (\cite{NN}) abstract $CR$ manifolds of type $(n,0)$ 
are $n$-dimensional complex manifolds. 

\subsection{Natural projection onto the canonical bundle} On a $CR$ manifold $\sfM$ of
type $(n,k)$ we have an exact sequence of smooth bundle maps \smallskip
\begin{equation} \label{e2.3}
\begin{CD}
 0 @>>> \C\Ha^{0}_{\sfM} @>>> \Ta^{\ast}{}^{1,0}_{\!\!\sfM}\oplus\Ta^{\ast}{}^{0,1}_{\!\!\sfM}
 @>>> \C\Ta^{\ast}_{\sfM}@>>>0,
\end{CD}
\end{equation}
where 
\begin{gather*}
\C\Ha^{0}_{\sfM}\,{\ni}\,\xiup\,\longrightarrow\,(\xiup,\xiup)
\,{\in}\,\Ta^{\ast}{}^{1,0}_{\!\!\sfM}\oplus\Ta^{\ast}{}^{0,1}_{\!\!\sfM},\\
\Ta^{\ast}{}^{1,0}_{\!\!\sfM}\oplus\Ta^{\ast}{}^{1,0}_{\!\!\sfM}\,{\ni}\,(\xiup_{1},\xiup_{2})
\longrightarrow \xiup_{1}{-}\xiup_{2}\,{\in}\, \C\Ta^{\ast}_{\sfM}.
\end{gather*}
A \emph{$CR$ gauge} is a bundle map 
\begin{equation}\label{e2.4}
 \lambdaup:\C\Ta^{\ast}_{\sfM} \longrightarrow\Ta^{\ast}{}^{1,0}_{\!\!\sfM},\;\;\text{such that}\;\;
 \xiup-\tfrac{1}{2}\lambdaup(\xiup)\in\Ta^{\ast}{}^{0,1}_{\!\!\sfM},\;\forall\xiup\in\C\Ta^{\ast}_{\sfM}.
\end{equation}
Note that, in particular, 
\begin{equation*}
 \lambdaup(\Ta^{\ast}{}^{0,1}_{\!\!\sfM})\subseteq\C\Ha^{0}_{\sfM}\;\;\;\text{and}\;\;\;
 \xiup-\tfrac{1}{2}\lambdaup(\xiup)\in\C\Ha^{0}_{\sfM},\;\forall \xiup\in\Ta^{\ast}{}^{1,0}_{\!\!\sfM}.
\end{equation*}
Therefore a $CR$ gauge yields a short exact sequence inverting  \eqref{e2.3}:
\begin{equation*}\begin{CD}
 0 @>>> \C\Ta^{\ast}_{\sfM}@>{(\lambdaup/2)\oplus(\id-\lambdaup/2)}>>
 \Ta^{\ast}{}^{1,0}_{\!\!\sfM}\oplus \Ta^{\ast}{}^{0,1}_{\!\!\sfM} 
 @>{(\lambdaup/{2}-\id,\lambdaup/2)}>> \C\Ha^{0}_{\sfM}@>>>0,
 \end{CD}
\end{equation*}
where $\id$ denotes the identity on $\C\Ta^{\ast}_{\sfM}.$ \par 
There are several possible choices of $\lambdaup.$ 
A $CR$ gauge $\lambdaup$ is said to be \emph{of the real type} if 
\begin{equation} \lambdaup(\xiup)+\overline{\lambdaup(\,\overline{\xiup}\,)}=2{\cdot}\xiup,\;\;
\forall\xiup\,\in\C\Ta^{\ast}{}_{\sfM}
\end{equation}
and \emph{balanced} if 
\begin{equation}
 \lambdaup(\xiup)=\xiup,\;\;\;\forall \xiup\in\Ha^{0}_{\sfM}.
\end{equation} 
Existence of balanced $CR$ gauges of the real type was 
proved in \mbox{\cite[\S{2}A]{MN}}. The following is then easy to deduce from the above properties.
\begin{prop} The bundle map $\lambdaup$ of a balanced $CR$ gauge 
\begin{itemize}
 \item is a projection of $\C\Ta^{\ast}_{\sfM}$ onto $\Ta^{\ast}{}^{1,0}_{\sfM}$;
 \item its restriction to $\Ta^{\ast}_{\sfM}$ is an isomorphism with a real complement
 of $i{\cdot}\Ha^{0}_{\sfM}$ in $\Ta^{\ast}{}^{1,0}_{\!\!\sfM}.$  
\end{itemize}
\end{prop}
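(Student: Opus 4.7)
The plan is to handle the two bullets separately, drawing on the defining property of a $CR$ gauge together with the balanced (and, for the second item, real-type) hypothesis.

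For the first bullet, the inclusion $\lambdaup(\C\Ta^{\ast}_{\sfM}) \subseteq \Ta^{\ast}{}^{1,0}_{\sfM}$ is built into the definition of a $CR$ gauge, so the substance of the claim is surjectivity onto $\Ta^{\ast}{}^{1,0}_{\sfM}$. I would pick an arbitrary $\etaup \in \Ta^{\ast}{}^{1,0}_{\sfM}$ and construct a preimage. The gauge condition at $\etaup$ gives $\etaup - \tfrac{1}{2}\lambdaup(\etaup) \in \Ta^{\ast}{}^{0,1}_{\sfM}$; since both $\etaup$ and $\lambdaup(\etaup)$ already belong to $\Ta^{\ast}{}^{1,0}_{\sfM}$, this element actually lies in $\Ta^{\ast}{}^{1,0}_{\sfM} \cap \Ta^{\ast}{}^{0,1}_{\sfM} = \C\Ha^{0}_{\sfM}$. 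The balanced hypothesis $\lambdaup|_{\Ha^{0}_{\sfM}} = \id$ extends $\C$-linearly to all of $\C\Ha^{0}_{\sfM}$, so applying $\lambdaup$ to $\etaup - \tfrac{1}{2}\lambdaup(\etaup)$ leaves it fixed, giving by $\C$-linearity
\[
\lambdaup(\etaup) - \tfrac{1}{2}\lambdaup^{2}(\etaup) = \etaup - \tfrac{1}{2}\lambdaup(\etaup),
\]
which rearranges to $\etaup = \lambdaup\bigl(\tfrac{3}{2}\etaup - \tfrac{1}{2}\lambdaup(\etaup)\bigr)$, exhibiting $\etaup$ as an element of the image.

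For the second bullet, the real-type identity applied at a real $\xiup \in \Ta^{\ast}_{\sfM}$ (so $\overline{\xiup} = \xiup$) collapses to $\lambdaup(\xiup) + \overline{\lambdaup(\xiup)} = 2\xiup$, equivalently $\Re\,\lambdaup(\xiup) = \xiup$. This immediately yields two things: $\lambdaup|_{\Ta^{\ast}_{\sfM}}$ is injective (since $\xiup$ is recovered as the real part of $\lambdaup(\xiup)$), and any $\xiup$ with $\lambdaup(\xiup) \in i\Ha^{0}_{\sfM}$ (purely imaginary) must satisfy $\xiup = \Re\,\lambdaup(\xiup) = 0$, so $\lambdaup(\Ta^{\ast}_{\sfM}) \cap i\Ha^{0}_{\sfM} = \{0\}$. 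A real-rank count concludes: the injective image has rank $2n+k$, $i\Ha^{0}_{\sfM}$ has rank $k$, and $\Ta^{\ast}{}^{1,0}_{\sfM}$ has real rank $2n+2k$; the trivial intersection together with the dimension match force $\lambdaup(\Ta^{\ast}_{\sfM}) \oplus i\Ha^{0}_{\sfM} = \Ta^{\ast}{}^{1,0}_{\sfM}$, which is precisely the complement statement.

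I expect the principal subtlety to lie in the first step: the factor $\tfrac{1}{2}$ in the gauge condition prevents $\lambdaup$ from being a literal idempotent, so an abstract splitting argument is not directly available; one must instead exploit the specific interplay between the balanced condition and the overlap $\C\Ha^{0}_{\sfM} = \Ta^{\ast}{}^{1,0}_{\sfM} \cap \Ta^{\ast}{}^{0,1}_{\sfM}$ to extract the surjectivity identity above. The second bullet then reduces to a short linear-algebra exercise once the real-type identity is in hand.
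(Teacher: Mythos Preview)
The paper does not supply a proof of this proposition; it only remarks that it is ``easy to deduce from the above properties,'' so there is nothing to compare against directly. Your argument is correct. In particular, your observation that $\lambdaup$ is not a literal idempotent is accurate (for $\etaup\in\Ta^{\ast}{}^{1,0}_{\!\!\sfM}$ one finds $\lambdaup(\etaup)\equiv 2\etaup\pmod{\C\Ha^{0}_{\sfM}}$), so interpreting ``projection'' as surjectivity is the appropriate reading, and your surjectivity identity $\etaup=\lambdaup\bigl(\tfrac{3}{2}\etaup-\tfrac{1}{2}\lambdaup(\etaup)\bigr)$ is neat.

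One minor comment on the second bullet: you invoke the real-type identity, while the proposition as written only assumes the balanced condition. The conclusion still holds under that weaker hypothesis---for instance, if $\xiup\in\Ta^{\ast}_{\sfM}$ is real with $\lambdaup(\xiup)=0$, the gauge condition gives $\xiup\in\Ta^{\ast}{}^{0,1}_{\!\!\sfM}$, conjugation gives $\xiup\in\Ta^{\ast}{}^{1,0}_{\!\!\sfM}$, hence $\xiup\in\Ha^{0}_{\sfM}$ and balance forces $\xiup=\lambdaup(\xiup)=0$; a similar manipulation handles the trivial-intersection claim. That said, the sentence in the paper immediately preceding the proposition refers to balanced gauges \emph{of the real type}, so your reading of the hypotheses is entirely defensible and your route via $\Re\,\lambdaup(\xiup)=\xiup$ is certainly the cleaner one.
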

Let us define a linear map
\begin{equation}
 \lambdaup^{\pq}:\Lambda^{\pq}(\C\Ta^{\ast}_{\sfM})\to\Lambda^{\pq}(\Ta^{\ast}{}^{1,0}_{\!\!\sfM})
\end{equation}
in such a way that on the monomials we have 
\begin{equation*}
 \lambdaup^{p}(\xiup_{1}\wedge\cdots\wedge\xiup_{\pq})=\lambdaup(\xiup_{1})\wedge
 \cdots\wedge\lambdaup(\xiup_{\pq}).
\end{equation*}
\begin{prop}  The map
$\lambdaup^{n+k}$ defines a projection of $\Lambda^{n+k}(\C\Ta^{\ast}_{\sfM})$ onto $\Kt_{\sfM},$
which is independent of the particular choice of a balanced $CR$ gauge
$\lambdaup$ of the real type.
\end{prop}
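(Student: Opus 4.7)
The plan is to verify three things in order: first, that $\lambdaup^{n+k}$ has image in $\Kt_{\sfM}$; second, that it is surjective and so truly projects onto $\Kt_{\sfM}$; third, that it does not depend on which balanced $CR$ gauge of the real type is chosen. The first point is essentially formal: since by definition $\lambdaup(\C\Ta^{\ast}_{\sfM}) \subseteq \Ta^{\ast}{}^{1,0}_{\!\!\sfM}$, the induced map on degree $n{+}k$ exterior powers satisfies $\lambdaup^{n+k}(\Lambda^{n+k}(\C\Ta^{\ast}_{\sfM})) \subseteq \Lambda^{n+k}(\Ta^{\ast}{}^{1,0}_{\!\!\sfM})$, and because $\Ta^{\ast}{}^{1,0}_{\!\!\sfM}$ has complex rank $n{+}k$, its top exterior power is the line bundle $\Kt_{\sfM}$.

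For the second point, I would work in a local frame: pick $\thetaup_{1},\ldots,\thetaup_{k}$ spanning $\C\Ha^{0}_{\sfM}$ and complete to a frame $\thetaup_{1},\ldots,\thetaup_{k},\zetaup_{1},\ldots,\zetaup_{n}$ of $\Ta^{\ast}{}^{1,0}_{\!\!\sfM}$. Balancedness gives $\lambdaup(\thetaup_{l}) = \thetaup_{l}$, while the defining relation $\xiup - \tfrac{1}{2}\lambdaup(\xiup) \in \Ta^{\ast}{}^{0,1}_{\!\!\sfM}$ forces $\lambdaup(\zetaup_{j}) \equiv 2\zetaup_{j}$ modulo $\C\Ha^{0}_{\sfM}$. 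Since the $\thetaup$-tails produced by each $\lambdaup(\zetaup_{j})$ are killed after wedging with $\thetaup_{1}\wedge\cdots\wedge\thetaup_{k}$, one obtains
\[
\lambdaup^{n+k}(\thetaup_{1}\wedge\cdots\wedge\thetaup_{k}\wedge\zetaup_{1}\wedge\cdots\wedge\zetaup_{n})\,=\,2^{n}\,\thetaup_{1}\wedge\cdots\wedge\thetaup_{k}\wedge\zetaup_{1}\wedge\cdots\wedge\zetaup_{n},
\]
so $\lambdaup^{n+k}$ restricts on $\Kt_{\sfM}$ to the universal scalar $2^{n}{\cdot}\id$, and in particular surjects onto $\Kt_{\sfM}$.

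For the third point, let $\lambdaup$ and $\muup$ be two balanced $CR$ gauges of the real type. Subtracting the defining relations shows that $\etaup := \muup - \lambdaup$ is a $\C$-linear bundle map with image in $\Ta^{\ast}{}^{1,0}_{\!\!\sfM} \cap \Ta^{\ast}{}^{0,1}_{\!\!\sfM} = \C\Ha^{0}_{\sfM}$; the balanced condition forces $\etaup|_{\C\Ha^{0}_{\sfM}} = 0$, and the real-type condition yields the anti-symmetry $\etaup(\xiup) = -\overline{\etaup(\bar{\xiup})}$. Expanding
\[
\muup^{n+k}(\xiup_{1}\wedge\cdots\wedge\xiup_{n+k})\,=\,\bigwedge_{i=1}^{n+k}\bigl(\lambdaup(\xiup_{i}) + \etaup(\xiup_{i})\bigr)
\]
produces $\lambdaup^{n+k}(\xiup_{1}\wedge\cdots\wedge\xiup_{n+k})$ plus a sum of correction terms in which at least one factor lies in $\C\Ha^{0}_{\sfM}$. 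The main obstacle is to verify that these corrections vanish collectively: the plan is to exploit the rank bound $\rk\,\C\Ha^{0}_{\sfM} = k$ to annihilate terms carrying more than $k$ such factors, then to pair up the surviving contributions via the anti-symmetry $\etaup(\xiup) = -\overline{\etaup(\bar{\xiup})}$ so that they cancel on every basis monomial $\thetaup_{I}\wedge\zetaup_{J}\wedge\bar{\zetaup}_{L}$ of $\Lambda^{n+k}(\C\Ta^{\ast}_{\sfM})$. This combinatorial cancellation is the delicate part; everything else is routine given the prior proposition on balanced real-type gauges.
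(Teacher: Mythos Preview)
Your third step is the crux, and leaving it as an unexecuted ``plan'' is a genuine gap.  Worse, the plan cannot succeed: the anti-symmetry $\etaup(\xiup)=-\overline{\etaup(\bar\xiup)}$ relates the value of $\etaup$ on a covector to its value on the \emph{conjugate} covector, but a single basis monomial of $\Lambda^{n+k}(\C\Ta^{\ast}_{\sfM})$ need not contain conjugate pairs, so there is nothing to pair.  Concretely, choose $w_{1},\dots,w_{n}$ spanning a complement of $\C\Ha^{0}_{\sfM}$ in $\Ta^{\ast}{}^{1,0}_{\!\!\sfM}$ and $\tauup_{1},\dots,\tauup_{k}\in\Ha^{0}_{\sfM}$.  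The gauge with $\lambdaup(w_{i})=2w_{i}$, $\lambdaup(\bar w_{i})=0$ and the gauge with $\muup(w_{1})=2w_{1}+\tauup_{k}$, $\muup(\bar w_{1})=-\tauup_{k}$ (agreeing with $\lambdaup$ on all other basis vectors) are both balanced and of the real type, yet on the monomial $w_{1}\wedge\cdots\wedge w_{n}\wedge\tauup_{1}\wedge\cdots\wedge\tauup_{k-1}\wedge\bar w_{1}$ one computes $\lambdaup^{n+k}=0$ while $\muup^{n+k}=-2^{\,n}\,w_{1}\wedge\cdots\wedge w_{n}\wedge\tauup_{1}\wedge\cdots\wedge\tauup_{k}$.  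No combinatorial cancellation can repair this.

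The paper takes a different route: it fixes a basis $\xiup_{i}\in\Ta^{\ast}{}^{1,0}_{\!\!\sfM}$, $\tauup_{j}\in\Ha^{0}_{\sfM}$, $\zetaup_{i}\in\Ta^{\ast}{}^{0,1}_{\!\!\sfM}$ and evaluates $\lambdaup^{n+k}$ on every degree-$(n{+}k)$ basis monomial, arguing that the answer is visibly gauge-free.  The hinge is the assertion $\lambdaup(\xiup_{i})=\xiup_{i}$.  But your own step~2 already contradicts this: the defining relation $\xiup-\tfrac{1}{2}\lambdaup(\xiup)\in\Ta^{\ast}{}^{0,1}_{\!\!\sfM}$ forces $\lambdaup(\xiup)\equiv 2\xiup\pmod{\C\Ha^{0}_{\sfM}}$ on $\Ta^{\ast}{}^{1,0}_{\!\!\sfM}$, which is exactly why you found the factor $2^{n}$ on $\Kt_{\sfM}$ rather than~$1$.  (Indeed, combining $\lambdaup(\xiup_{i})=\xiup_{i}$ with the real-type identity would give $\lambdaup(\zetaup_{i})=\zetaup_{i}\notin\Ta^{\ast}{}^{1,0}_{\!\!\sfM}$, a contradiction.)  So there is an inconsistency between the gauge axioms as written and the proof; the right move is to flag it rather than to attempt the cancellation in step~3.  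If the intended reading of~(\ref{e2.4}) makes $\lambdaup$ an honest projection (identity on $\Ta^{\ast}{}^{1,0}_{\!\!\sfM}$), then the paper's direct basis-monomial computation is both shorter and sufficient, and you should adopt it in place of your comparison argument.
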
 
\begin{proof} 
 Fix a point $\pct$ of $\sfM$ and a basis 
\begin{equation*}\tag{$\ddag$}
 \xiup_{1},\hdots,\xiup_{n},\tauup_{1},\hdots,\tauup_{k},\zetaup_{1},\hdots,\zetaup_{n}
\end{equation*}
of $\C\Ta^{\ast}_{\sfM,\pct}$ with $\xiup_{i}\,{\in}\,\Ta^{\ast}{}^{1,0}_{\!\!\sfM,\pct},$
for $1{\leq}i{\leq}n,$ $\tauup_{j}\,{\in}\,\Ha^{0}_{\sfM}$ for $1{\leq}j{\leq}k$ and   
$\zetaup_{i}\,{\in}\,\Ta^{\ast}{}^{0,1}_{\!\!\sfM,\pct}$ for $1{\leq}i{\leq}n.$ 
\par 
If $\lambdaup$ is a balanced $CR$ gauge of the real type, then 
$\lambdaup(\xiup_{i})\,{=}\,\xiup_{i}$ for $1{\leq}i{\leq}n,$
$\lambdaup(\tauup_{j})\,{=}\,\tauup_{j}$ for $1{\leq}j{\leq}k,$
$\lambdaup(\zetaup_{i})\,{\in}\,\C\Ha^{0}_{\sfM}$ for $1{\leq}i{\leq}n.$
Hence the image by $\lambdaup^{n+k}$ of all monomials of degree $n{+}k$ 
in the elements of the basis
$(\ddag)$ which contain a factor $\zetaup_{i}$ contain at least $k{+}1$ factors in $\C\Ha^{0}_{\sfM}$
and is therefore $0,$ while 
\begin{equation*}
 \lambdaup^{n+k}(\xiup_{1}\wedge\cdots\wedge\xiup_{n}\wedge\tauup_{1}\wedge\cdots\wedge\tauup_{k})
 =\xiup_{1}\wedge\cdots\wedge\xiup_{n}\wedge\tauup_{1}\wedge\cdots\wedge\tauup_{k}.
\end{equation*}
This completes the proof.
\end{proof}
\begin{defn}
 We call $\lambdaup^{n+k}$ the \emph{natural projection} onto the canonical bundle.
\end{defn}

\subsection{$CR$ maps}
Let  $(\sfM_{i},\Kt_{\sfM_{i}})$ be abstract $CR$  manifolds of type $(n_{i},k_{i})$ ($i{=}1,2$),
 $f\,{:}\,\sfM_{1}\,{\to}\,\sfM_{2}$ a smooth map and $f_{*}\,{:}\,
 \C\Ta_{\sfM_{1}}\,{\to}\,\C\Ta_{\sfM_{2}}$ the bundle map
obtained by complexifying its differential.
\begin{defn}
The map $f$ is said to be:
\begin{itemize}
 \item $CR$ if  $f_{*}(\Ta^{0,1}_{\sfM_{1}})\,\subseteq\,\Ta^{0,1}_{\sfM_{2}}$;
\item a $CR$ \emph{immersion} (\emph{embedding}) if it is a smooth immersion (embedding)
and moreover $\Ta^{0,1}_{\sfM_{1}}\,{=}\,\{Z\in\C\Ta_{\sfM_{1}}\,{\mid}\, f_{*}(Z)\,{\in}\,
\Ta^{0,1}_{\sfM_{2}}\}.$ 
\item a \emph{$CR$ submersion} if it is a smooth submersion and $f_{*}(\Ta^{0,1}\sfM_{1})
{=}\Ta^{0,1}\sfM_{2}.$ 
\end{itemize}\par 
A $CR$ immersion (embedding) is called \emph{generic} when $n_{1}{+}k_{1}{=}n_{2}{+}k_{2}.$   
\end{defn}

On an abstract
$CR$ manifold $\sfM$ of type $(n,k),$ we consider 
the Grassmann subalgebra $\Omega^{*}_{\sfM}$
of $\Lambda^{*}(\C\Ta^{\ast}_{\sfM})$ generated by
$\Ta^{\ast}{}^{1,0}_{\sfM}$:
\begin{equation*}
 \Omega^{\ast}_{\sfM}={\sum}_{\pq=0}^{n+k}\Omega^{\pq}_{\sfM},\quad \text{with\;\;
 $\Omega^{\pq}_{\sfM}=\Omega^{\ast}_{\sfM}\cap\Lambda^{\pq}(\C\Ta^{\ast}_{\sfM})$}.
\end{equation*}
With this notation, $\Kt_{\sfM}\,{=}\,\Omega_{\sfM}^{n+k}.$ 
$CR$ maps and immersion can be described by using the bundles $\Omega^{*}_{\sfM}.$ 

\begin{prop} Let $\sfM_{i}$ be abstract $CR$ manifolds of type $(n_{i},k_{i}),$ for $i{=}1,2$ 
and $f\,{:}\,\sfM_{1}\,{\to}\,\sfM_{2}$ a smooth map. Then $f$ is 
\begin{itemize}
 \item $CR$ iff $f^{*}(\Omega^{1}_{\sfM_{2}})
\,{\subseteq}\,\Omega^{1}_{\sfM_{1}}$;
\item a $CR$ immersion iff $f$ is a smooth immersion and
$f^{*}(\Omega^{n_{1}+k_{1}}_{M_{2}})\,{=}\,\Kt_{M_{1}}.$ 
\end{itemize}  
A generic $CR$-immersion $f\,{:}\,M_{1}\,{\to}\,M_{2}$
is  characterized by 
\begin{equation}\vspace{-20pt}
f^{*}(\Kt_{M_{2}})=\Kt_{M_{1}}.
\end{equation} \qed
\end{prop}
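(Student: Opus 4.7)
My plan is to reduce all three equivalences to one linear-algebraic ingredient --- the Pl\"ucker-style correspondence between nonzero decomposable top forms and the subspace they span --- combined with the tautological fact that $\Omega^{1}_{\sfM}=\Ta^{\ast}{}^{1,0}_{\!\!\sfM}$ is the annihilator of $\Ta^{0,1}_{\sfM}$ in $\C\Ta^{\ast}_{\sfM}$. For brevity set $W_{i}:=\Ta^{\ast}{}^{1,0}_{\!\!\sfM_{i}}$, a complex bundle of rank $n_{i}{+}k_{i}$. The first bullet is then pure finite-dimensional duality: from $(f^{*}\xiup)(Z)=\xiup(f_{*}Z)$, the condition $f^{*}(W_{2})\subseteq W_{1}$ reads as $\xiup(f_{*}Z)=0$ for every $\xiup$ annihilating $\Ta^{0,1}_{\sfM_{2}}$ and every $Z\in\Ta^{0,1}_{\sfM_{1}}$, which by double annihilator is precisely $f_{*}(\Ta^{0,1}_{\sfM_{1}})\subseteq\Ta^{0,1}_{\sfM_{2}}$.

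For the second bullet the direct implication is easy: being CR, $f^{*}$ preserves the grading of $\Omega^{\ast}$, so $f^{*}(\Omega^{n_{1}+k_{1}}_{\sfM_{2}})\subseteq\Kt_{\sfM_{1}}$. To obtain equality I would argue pointwise that $f^{*}|_{W_{2}}\colon W_{2}\to W_{1}$ is surjective by the same annihilator game: the annihilator of $f^{*}(W_{2})$ in $\C\Ta_{\sfM_{1},\pct}$ is $\{Z\mid f_{*}Z\in\Ta^{0,1}_{\sfM_{2}}\}$, which under the CR-immersion hypothesis equals $\Ta^{0,1}_{\sfM_{1},\pct}$, the annihilator of $W_{1}$. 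Biduality then gives $f^{*}(W_{2})=W_{1}$, and taking $(n_{1}{+}k_{1})$-th exterior powers yields the required equality of line bundles.

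For the converse, fix $\pct\in\sfM_{1}$: the hypothesis provides $\xiup_{1},\ldots,\xiup_{n_{1}+k_{1}}\in W_{2}$ with $\omega:=f^{*}\xiup_{1}\wedge\cdots\wedge f^{*}\xiup_{n_{1}+k_{1}}$ a nonzero element of $\Kt_{\sfM_{1},\pct}=\Lambda^{n_{1}+k_{1}}(W_{1})$. Since $\omega$ is a nonzero decomposable in the top exterior power of $W_{1}$, Pl\"ucker forces $f^{*}\xiup_{1},\ldots,f^{*}\xiup_{n_{1}+k_{1}}$ to be a basis of $W_{1}$. To extend this to an arbitrary $\etaup\in W_{2}$, the wedge $\etaup\wedge\xiup_{2}\wedge\cdots\wedge\xiup_{n_{1}+k_{1}}$ also lies in $\Omega^{n_{1}+k_{1}}_{\sfM_{2},f(\pct)}$, and its pullback must lie in $\Kt_{\sfM_{1},\pct}$. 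Fixing any complement $\C\Ta^{\ast}_{\sfM_{1},\pct}=W_{1}\oplus U$ and writing $f^{*}\etaup=a+b$ accordingly, the direct-sum decomposition $\Lambda^{n_{1}+k_{1}}(W_{1}\oplus U)=\bigoplus_{i+j=n_{1}+k_{1}}\Lambda^{i}(W_{1})\wedge\Lambda^{j}(U)$ forces $b\wedge f^{*}\xiup_{2}\wedge\cdots\wedge f^{*}\xiup_{n_{1}+k_{1}}=0$, and the non-vanishing of $f^{*}\xiup_{2}\wedge\cdots\wedge f^{*}\xiup_{n_{1}+k_{1}}$ then forces $b=0$. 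Hence $f^{*}(W_{2})\subseteq W_{1}$, so $f$ is CR; as $f^{*}(W_{2})=W_{1}$ by the basis above, any $Z$ with $f_{*}Z\in\Ta^{0,1}_{\sfM_{2}}$ is annihilated by every element of $W_{1}$, hence lies in $\Ta^{0,1}_{\sfM_{1}}$, making $f$ a CR immersion.

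The generic case follows formally: when $n_{1}{+}k_{1}=n_{2}{+}k_{2}$, $\Omega^{n_{1}+k_{1}}_{\sfM_{2}}=\Kt_{\sfM_{2}}$ and the second bullet specializes; conversely, the form-degree on either side of $f^{*}\Kt_{\sfM_{2}}=\Kt_{\sfM_{1}}$ forces $n_{1}{+}k_{1}=n_{2}{+}k_{2}$, and applying the Pl\"ucker argument separately to $W_{2}$ and its complex conjugate, together with $W_{i}+\overline{W}_{i}=\C\Ta^{\ast}_{\sfM_{i}}$, shows that $f^{*}$ is surjective on full cotangent spaces, so $f$ is a smooth immersion and the previous paragraph applies. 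I expect the Pl\"ucker step of the third paragraph to be the principal obstacle: turning a single top-degree pointwise identity into the first-degree containment $f^{*}(W_{2})\subseteq W_{1}$ requires testing against a whole family of wedges and tracking the direct-sum decomposition of $\Lambda^{n_{1}+k_{1}}(W_{1}\oplus U)$ carefully, whereas every other step is either pure duality or a formal consequence.
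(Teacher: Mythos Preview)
Your argument is correct. The paper itself supplies no proof for this proposition: it is stated with a terminal \qed, i.e.\ treated as routine linear algebra on the cotangent side. Your write-up fills in exactly the steps one would expect---the duality between $\Ta^{0,1}_{\sfM}$ and its annihilator $\Omega^{1}_{\sfM}$ for the first bullet, surjectivity of $f^{*}|_{W_{2}}$ via annihilators for the forward direction of the second, and the Pl\"ucker/decomposable-form argument for the converse---and each step checks out. The point you flag as the ``principal obstacle'' (upgrading the single top-degree identity to $f^{*}(W_{2})\subseteq W_{1}$ by wedging an arbitrary $\etaup$ against the already-found basis and reading off the $U$-component) is handled cleanly; the key fact you use, that $b\wedge\alpha=0$ with $0\neq\alpha\in\Lambda^{n_{1}+k_{1}-1}(W_{1})$ and $b\in U$ forces $b=0$, follows from the canonical identification of the $(1,n_{1}{+}k_{1}{-}1)$-summand with $U\otimes\Lambda^{n_{1}+k_{1}-1}(W_{1})$. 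Your treatment of the generic case, deducing that $f$ is a smooth immersion by applying the same Pl\"ucker argument to $\overline{W_{2}}$ and invoking $W_{1}+\overline{W_{1}}=\C\Ta^{\ast}_{\sfM_{1}}$, is also correct and is a detail the paper leaves entirely implicit.
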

\par
\vspace{2pt}

\subsection{$\qq$-pseudoconcavity}
Let $\qq$ be a nonegative integer. 
Following \cite{HN1}, we say that 
a $CR$ manifold 
$M,$ of type $(n,k),$ 
is \emph{$\qq$-pseu\-do\-con\-cave}
if, for every $\pct\,{\in}\, M$ 
and every characteristic covector $\xiup\,{\in}\, 
\Ha^{0}_{\pct} M
{\backslash}\{0\}$, 
the scalar
Levi form $\Le_{\pct}(\xiup, \cdot)$ 
has at least $q$ negative and $q$ positive eigenvalues.\par
We recall that 
\begin{equation*}
 \Le_{\pct}(\xiup,\vq)=\xiup([\Jd_{\sfM}X,X])=\sfd\,{\tilde{\xiup}}(\vq,\Jd_{\sfM}(\vq)),\;\; \text{for}\, \vq\in\Ha_{\pct}M,
\end{equation*}
with $X\,{\in}\,\Gamma^{\infty}(M,\Ha_{\sfM}),$ $\tilde{\xiup}\in\Gamma^{\infty}(M,\Ha^{0}M)$
and $X_{\pct}\,{=}\,\vq,$ $\tilde{\xiup}_{\pct}=\xiup.$ \par
The $CR$ dimension $n$ of a $\qq$-pseudoconcave $CR$ manifold is  at least $2\qq$ 
(more precise lower bounds on $n,$ depending also on the $CR$ codimension $k,$
can be obtained e.g. from \cite{AY}).
\par 
Various equivalent definitions of the Levi form and more of its aspects are explained in~\cite{BHLN}.\\

\subsection{Deformation of $CR$ structures}
With the preparation of \S\ref{subs2.1}, we can define 
$CR$ deformations
of a $CR$ manifold $\sfM$ of general
type $(n,k)$ in terms of canonical bundles,
allowing 
in this way 
also a deformation of  the underlying contact structure $\Ha_{\sfM}.$

\begin{defn}\label{defn2.3}
 A smooth one-parameter deformation of a $CR$ structure $\Kt_{\sfM}$ on $\sfM$ 
 is a smooth map $\sfM{\times}(-\epi_{0},\epi_{0})\ni(\pct,\ttt)\,{\to}\,[\Kt_{\sfM_{\ttt},\pct}]\,{\in}
 \Pro(\Lambda^{n+k}({\CT}^{\ast}))$ such that, for each $\ttt\,{\in}\,(-\epi_{0},\epi_{0}),$ 
$\Kt_{\sfM_{\ttt}}$ is the canonical complex line bundle 
of a $CR$ structure on $\sfM$ 
 and $\Kt_{\sfM_{0}}\,{=}\,\Kt_{\sfM}.$ 
\end{defn}
Line bundles over a contractible base are trivial (see e.g. \cite[Cor.3.4.8]{Hu}). 
Hence, for
a compact $\sfM,$ we can find a finite open covering  $\Uq\,{=}\,\{U_{j}\}$ 
to describe the deformation by the data of smooth maps 
\begin{equation}
 \sfM\times (-\epi_{0},\epi_{0})\ni (\pct,\ttt) \to \omegaup_{j}(\pct,\ttt)\in\Lambda^{n+k}\C\Tas{\sfM}
\end{equation}
such that 
  $\omegaup_{j,\ttt}(\,\cdot\,,\ttt)
 \in\Gamma^{\infty}(U_{j},\Kt_{M_{t}})$
  for each $\ttt\,{\in}\,(-\epi_{0},\epi_{0}).$

\section{$\bar{\partial}_{\sfM}$-cohomology}\label{secprel}

Let $\sfM$ be an abstract $CR$ manifold of type $(n,k).$ 
\par 
The ideal sheaf $\cI_{\!\sfM}$ of $\Ai_{\sfM}$ generated by the germs of smooth
sections of $\Ta^{\ast}{}^{1,0}_{\!\!\sfM}$ is, according to \cite{C}, 
the \emph{characteristic system} of $\Ta^{0,1}_{\sfM}.$ Hence, formal
integrability is equivalent to 
\begin{equation*}
 \sfd(\cI_{\!\sfM})\subseteq\cI_{\!\sfM}.
\end{equation*}
For nonnegative integers $\pq,$ 
its exterior powers $\cI_{\!{\sfM}}^{p}$ are also  $\sfd$-closed ideals
and we  can consider the quotients 
of the de Rham 
complex:
\begin{equation*}\tag{$*$}
 \opa_{\sfM} : \cI_{\!{\sfM}}^{\pq}/ \cI_{\!{\sfM}}^{\pq+1} \longrightarrow  
\cI_{\!{\sfM}}^{\pq}/ \cI_{\!{\sfM}}^{\pq+1}
\end{equation*}
(we put $\cI_{\!\sfM}^{0}{=}\Ai_{\sfM}$). Since  
$\cI_{\!{\sfM}}^{p}{=}0$ when $\pq{>}n{+}k,$ the map
 $(*)$ is nontrivial if and only if $0{\leq}\pq{\leq}n{+}k.$  

\par 
The canonical $\Z$-grading of $\Ai_{\sfM}$ induces a $\Z$-grading 
$\cI_{\!{\sfM}}^{\pq}\,{=}\,{\bigoplus}_{\qq=0}^{n}\cI_{\!{\sfM}}^{\pq,\qq}$ of $\cI_{\!{\sfM}}^{\pq},$
with 
$\cI_{\!{\sfM}}^{p,q}=\cI_{\!{\sfM}}^{p} \,{\cap}\,\Ai^{p+q}_{\sfM}.$
Let us set $\Qq^{p,\ast}_{\,\sfM}\,{=}\, \cI_{\!{\sfM}}^{\pq}/ \cI_{\!{\sfM}}^{\pq+1}.$ 
By passing to the quotients, the $\Z$-gradings of the exterior powers of $\cI_{\!\sfM}$ 
induce the $\Z$-gradings 
\begin{equation*}
 \Qq^{p,*}_{\,\sfM}\,{=}\,
{\bigoplus}_{q=0}^{n}\Qq^{p,q}_{\,\sfM}.
\end{equation*}
For each pair of integers $(\pq,\qq)$ with $0{\leq}\pq{\leq}n{+}k,$ $0{\leq}\qq{\leq}n$
the summand $\Qq_{\,\sfM}^{\pq,\qq}$ 
is the sheaf of germs of smooth sections of a complex vector bundle
$\Qt_{\sfM}^{\pq,\qq}$ on~$\sfM.$ 
Then for each $0{\leq}\pq{\leq}n{+}k,$ the map $(*)$ is a \textit{differential complex}
\begin{equation*} 
\begin{CD}
 0 \to \Qq^{\pq,0}_{\,\sfM}@>{\bar{\partial}_{\,\sfM}}>>
 \Qq^{\pq,1}_{\,\sfM}@>>> \cdots @>{\bar{\partial}_{\,\sfM}}>>
 \Qq^{\pq,n-1}_{\,\sfM}@>{\bar{\partial}_{\,\sfM}}>>
 \Qq^{\pq,n}_{\,\sfM}\to 0,
\end{CD}
\end{equation*}
which is called 
the \emph{tangential Cauchy-Riemann complex  in degree $\pq$}
(for more details see e.g. \cite{HN1}).\par
 \smallskip
 Since $\cI_{\!\sfM}^{n+k+1}{=}\underline{0},$ the bundles
 $\Qt^{n+k,\qq}_{\,\sfM}$ 
 are subbundles 
 of $\Lambda^{n+k+\qq}(\C\Ta^{\ast}_{\sfM})$ and in particular  
$\Qt^{n+k,0}_{\sfM}$ is the same as
the
\textit{canonical bundle} $\Kt_\sfM$. As we will explain below, 
it is a \emph{$CR$ line bundle} and 
the tangential 
Cauchy-Riemann complex in degree $n{+}k$ is also
its $\bar{\partial}_{\sfM}$-complex as a $CR$ line bundle. 
This is a special example of a notion that was described e.g.
in 
\cite[\S{7}]{HN} and that we will quickly recall in the next subsection. 
\par \medskip
The tangential Cauchy-Riemann complexes can be described in a way that is suitable 
to deal with smooth deformations of the $CR$ structure. \par
Let us set $\Ta^{\ast}{}^{\pq,0}_{{\!\!\sfM}}=\Lambda^{\pq}(\Ta^{\ast}{}^{1,0}_{\!\!\sfM})$
and denote by $\Tq^{\ast}{}^{\pq,0}_{\!\!\sfM}$ 
the sheaf of germs of smooth sections of $\Ta^{\ast}{}^{\pq,0}_{{\!\!\sfM}}.$
\subsection{Differential presentation of the tangential $CR$ complex}
\label{sub3.1a}
Let $\Et_{\sfM}$ be  a complement of $\Ta^{\ast}{}^{1,0}_{\!\!\sfM}$ 
in $\C\Ta^{\ast}_{\sfM}.$ 
We can e.g. fix a smooth Hermitian product on the fibers of $\C\Ta^{\ast}_{\sfM}$
and take $\Et_{\sfM}=(\Ta^{\ast}{}^{1,0}_{\!\!\sfM})^{\perp}.$ The fiber bundle $\Et_{\sfM}$ has
rank $n$ and we obtain 
a bigradation  
of the Grassmann  algebra of the complexified cotangent bundle of $\sfM$ by setting
\begin{equation*}
 \Lambda^{*}(\C\Ta^{\ast}_{\sfM})={\sum}_{
\begin{smallmatrix}
 0{\leq}\pq{\leq}n{+}k\\
 0{\leq}\qq{\leq}n
\end{smallmatrix}
 } \Et^{\pq,\qq}_{\sfM},\;\;\;\text{with}\;\; \Et^{\pq,\qq}_{\sfM}=\Lambda^{\qq}(\Et_{\sfM})
 \otimes_{\sfM}\Ta^{\ast}{}^{\pq,0}_{\!\!\sfM}.
\end{equation*}
Denote by $\Eq^{\pq,\qq}_{\sfM}$ the sheaf of germs of smooth sections of $\Et^{\pq,\qq}_{\sfM}.$ 
\begin{lem} For every integer $\pq$ with $0{\leq}\pq{\leq}n{+}k$ we have: 
\begin{enumerate}
 \item $\Eq^{\pq,\qq}_{\sfM}\,{\subseteq}\,\cI_{\sfM}^{\pq}$\, .
 \item The restriction of the projection onto the quotient defines  isomorphisms 
\begin{equation}\label{e3.1}
 \Et^{\pq,\qq}_{\sfM}\to\Qt^{\pq,\qq}_{\sfM},\;\;\ \Eq^{\pq,\qq}_{\sfM}\to\Qq^{\pq,\qq}_{\sfM}
\end{equation}
\end{enumerate}
of fiber bundles and of their sheaves of sections.  \qed
\end{lem}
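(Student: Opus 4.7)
The plan is to exploit the bigradation of $\Lambda^{*}(\C\Ta^{\ast}_{\sfM})$ induced by the smooth splitting $\C\Ta^{\ast}_{\sfM}=\Ta^{\ast}{}^{1,0}_{\!\!\sfM}\oplus\Et_{\sfM}$ to identify the sheaf $\cI_{\sfM}^{\pq}$ as a precise direct sum of the $\Eq^{p',q'}_{\sfM}$; both assertions then reduce to purely formal bookkeeping on the Grassmann algebra, with no appeal to the integrability of the $CR$ structure.

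For statement $(1)$, a local section of $\Eq^{\pq,\qq}_{\sfM}$ is a finite sum of monomials $\alpha\wedge\beta$ with $\alpha\in\Gamma^{\infty}(U,\Lambda^{\qq}\Et_{\sfM})$ and $\beta$ a local section of $\Ta^{\ast}{}^{\pq,0}_{\!\!\sfM}$. Every such $\beta$ is a sum of $\pq$-fold wedges of sections of $\Ta^{\ast}{}^{1,0}_{\!\!\sfM}$, hence lies in $\cI_{\sfM}^{\pq}$ by definition; wedging on the left by $\alpha$ keeps the product in the ideal.

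The main step for $(2)$ is to establish the identification, valid as subsheaves of $\Ai_{\sfM}$,
\[
\cI_{\sfM}^{\pq}\;=\;\bigoplus_{p'\geq \pq,\;q'\geq 0}\Eq^{p',q'}_{\sfM}\,.
\]
The inclusion $\supseteq$ is the argument of $(1)$ applied to each summand. For $\subseteq$, a typical generator of $\cI_{\sfM}^{\pq}$ has the form $\beta_{1}\wedge\cdots\wedge\beta_{\pq}\wedge\gamma$ with $\beta_{i}\in \Ta^{\ast}{}^{1,0}_{\!\!\sfM}$ and $\gamma\in\Ai_{\sfM}$; decomposing $\gamma$ into its bigraded components and using that $\Ta^{\ast}{}^{\pq,0}_{\!\!\sfM}\wedge\Eq^{p'',q''}_{\sfM}\subseteq \Eq^{\pq+p'',q''}_{\sfM}$ places each term of the expansion on the right-hand side. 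Restricting the identity to the homogeneous degree $\pq+\qq$ piece then yields $\cI_{\sfM}^{\pq,\qq}=\bigoplus_{j\geq 0}\Eq^{\pq+j,\,\qq-j}_{\sfM}$, while the same argument for $\pq{+}1$ gives $\cI_{\sfM}^{\pq+1}\cap\Ai^{\pq+\qq}_{\sfM}=\bigoplus_{j\geq 1}\Eq^{\pq+j,\,\qq-j}_{\sfM}$. The projection onto $\Qq^{\pq,\qq}_{\sfM}$ annihilates every summand with $j\geq 1$ and restricts to a linear bijection on $\Eq^{\pq,\qq}_{\sfM}$, which is exactly the isomorphism \eqref{e3.1}, at the level both of fibre bundles and of their sheaves of smooth sections.

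I expect no substantial obstacle: the proof is essentially formal. The only step deserving care is checking that the bigradation induced by the chosen complement $\Et_{\sfM}$ is compatible with the ideal structure of $\cI_{\sfM}$, i.e.\ that $\cI_{\sfM}^{\pq}$ is homogeneous with respect to the second grading. This is precisely what the displayed decomposition above asserts, and follows at once from the definition of $\cI_{\sfM}$ as the ideal generated by sections of $\Ta^{\ast}{}^{1,0}_{\!\!\sfM}$ together with the behaviour of the wedge product on bihomogeneous factors.
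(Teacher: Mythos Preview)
Your argument is correct. The paper itself offers no proof of this lemma: it is stated with a \qed\ immediately after the enunciation, signalling that the authors regard it as an immediate consequence of the definitions. Your proposal spells out precisely the formal bookkeeping they have in mind, namely that the smooth splitting $\C\Ta^{\ast}_{\sfM}=\Ta^{\ast}{}^{1,0}_{\!\!\sfM}\oplus\Et_{\sfM}$ induces a bigradation of the exterior algebra with respect to which $\cI_{\sfM}^{\pq}=\bigoplus_{p'\geq\pq}\Eq^{p',q'}_{\sfM}$, so that passing to the quotient kills exactly the summands with $p'\geq\pq{+}1$ and leaves $\Eq^{\pq,\qq}_{\sfM}$ as a set of representatives for $\Qq^{\pq,\qq}_{\sfM}$. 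Nothing is missing; your remark that integrability plays no role here is also accurate.
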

By using the isomorphisms of \eqref{e3.1} we can rewrite 
the tangential Cauchy-Riemann complexes
in the form 
\begin{equation*} 
\begin{CD}
 0 @>>>\Eq^{\pq,0}_{\sfM}@>{\sfd''_{\sfM}}>> \Eq^{\pq,1}_{\sfM}@>{\sfd''_{\sfM}}>> \cdots\to
 \Eq^{\pq,n-1}_{\sfM}
 @>{\sfd''_{\sfM}}>> \Eq^{\pq,n}_{\sfM}@>>>0.
\end{CD}
\end{equation*}
\par 
We get explicit representations of $\sfd''_{\sfM},$ and hence of $\Qq^{\pq,\qq}_{\,\sfM}$ and
$\bar{\partial}_{\sfM},$
by choosing
suitable
coframes of $\C\Ta^{\ast}_{\sfM}.$ Take indeed on 
an open subset $U$ of $\sfM$ a coframe 
$\omegaup_{1},\hdots,\omegaup_{n},\etaup_{1},\hdots,
\etaup_{n+k}$ of $\Ta^{\ast}_{\sfM}$
 \par\centerline{with  
$\omegaup_{1},\hdots,\omegaup_{n}$ in $\Eq_{\sfM}$ 
and $\etaup_{1},\hdots,\etaup_{n+k}\,{\in}\,\Gamma^{\infty}(U',\Ta^{\ast}{}^{1,0}_{{\!\!\sfM}}).$}
\par\noindent
Then there are uniquely determined smooth complex vector fields
$\bar{L}_{1},\hdots,\bar{L}_{n},$ $P_{1},\hdots,P_{n+k}$
 such that 
 \begin{equation*}
 \sfd{\,} u={\sum}_{j=1}(\bar{L}_{j}u)\omegaup_{j}+{\sum}_{i=1}^{n+k}(P_{i}u)\etaup_{i}
\end{equation*}
The vector fields $\bar{L}_{h},$ for $1{\leq}h{\leq}n,$ are characterized by 
\begin{equation*}
 \omegaup_{j}(\bar{L}_{h})=\deltaup_{j,h}\,\text{(Kronecker delta)},\; 1{\leq}j{\leq}n,\;\;\;
 \etaup_{i}(\bar{L}_{h})=0,\;\; 1{\leq}i{\leq}n{+}k,
\end{equation*}
and hence  are sections of $\Ta^{0,1}_{\sfM}.$ By our definition, 
\begin{equation*}
 \sfd''_{\sfM}u={\sum}_{j=1}^{n}(\bar{L}_{j}u)\,\omegaup_{j},\;\;\forall u\in\Ci(U).
\end{equation*}
For each $j$ with $1{\leq}j{\leq}n$ we have 
\begin{equation*}
 \sfd\omegaup_{j}={\sum}_{1{\leq}j_{1}{\leq}j_{2}\leq{n}}\cq_{j}^{j_{1},j_{2}}\omegaup_{j_{1}}\wedge
 \omegaup_{j_{2}}+\zetaup_{j},
\end{equation*}
 with $\cq_{j}^{j_{1},j_{2}}\in\Ci(U),$ $\zetaup_{j}\in\cI_{\sfM}(U).$ This gives in particular 
\begin{equation*}
 \sfd''_{\sfM}\omegaup_{j}={\sum}_{1{\leq}j_{1}{\leq}j_{2}\leq{n}}\cq_{j}^{j_{1},j_{2}}\omegaup_{j_{1}}\wedge
 \omegaup_{j_{2}}
\end{equation*}
and hence, computing by recurrence on the number of factors, we get 
\begin{gather*}
\sfd''_{\sfM}(\omegaup_{j_{0}}{\wedge}\omegaup_{j_{1}}{\wedge}\cdots{\wedge}\omegaup_{j_{\qq}})
=\sfd''_{\sfM}(\omegaup_{j_{0}}){\wedge}\omegaup_{j_{1}}{\wedge}\cdots{\wedge}\omegaup_{j_{\qq}}+
\omegaup_{j_{0}}{\wedge}\sfd''_{\sfM}(\omegaup_{j_{1}}{\wedge}\cdots{\wedge}\omegaup_{j_{\qq}}),
\\
\sfd''_{\sfM}(\omegaup_{j_{1}}{\wedge}\cdots{\wedge}\omegaup_{j_{\qq}}\wedge
\etaup_{i_{1}}{\wedge}\cdots{\wedge}\etaup_{i_{\pq}})=
(\sfd''_{\sfM}(\omegaup_{j_{1}}{\wedge}\cdots{\wedge}\omegaup_{j_{\qq}}))\wedge
\etaup_{i_{1}}{\wedge}\cdots{\wedge}\etaup_{i_{\pq}}.
\end{gather*}
These formulas yield explicit local expressions for $\sfd''_{\sfM}.$ We note that for $\pq{=}0$
they only depend on the choice of the coframe $\omegaup_{1},\hdots,\omegaup_{n}$ of
$\Et_{\sfM}.$ \par\smallskip
In particular, if $\sfM$ is compact and $\{\sfM_{\ttt}\}_{|\ttt|{<}\epi_{0}}$ a smooth deformation of
its $CR$ structure, then $\Et_{\sfM}$ stays transversal to $\Ta^{\ast}{}^{1,0}_{\!\!\sfM_{\ttt}}$
for small $|\ttt|$ and therefore the tangential Cauchy-Riemann complex in degree $0$ for $\sfM_{\ttt}$
can be defined on  sections of the same vector bundles for all small $\ttt.$ \par 

\subsection{$CR$ line bundles}\label{sub3.2a}
A \emph{$CR$ line bundle} on $\sfM$ 
is the datum of a smooth complex line bundle $\piup\,{:}\,\Ft{\to}\sfM,$ together
with a $CR$ structure on $\Ft$ which is compatible with the linear structure of
the fibers and for which $\piup$ is a $CR$ submersion. The 
compatible $CR$ structures on $\Ft$ are parametrized (modulo $CR$ equivalence)
by special cohomology 
classes of $H^{1}(\sfM,\Hi_{\sfM}^{0,1}),$ where $\Hi_{\sfM}^{0,1}$ is the sheaf
of germs of local cohomology classes of degree $(0,1)$ on $\sfM$: 
In a local trivialization $\{U_{i},s_{i})\}$ of $\Ft,$ with transition functions $(\gammaup_{i,j}),$
the structure is described by
the datum of $\bar{\partial}_{\sfM}$-closed forms $\phiup_{i}$ in 
$\Qq^{0,1}_{\,\sfM}(U_{i})$ 
with 
\begin{equation*}
 \bar{\partial}_{\sfM}^{\Ft}s_{i}=\phiup_{i} \otimes{s}_{i}\;\;\text{on $U_{i}$\; and}\;\;\;
 \phiup_{i}-\phiup_{j}=\gammaup_{i,j}^{-1}\bar{\partial}_{\sfM}\gammaup_{i,j}\;\:
 \text{on $U_{i}\cap{U}_{j}$.}
\end{equation*}
We can consistently define a differential  
operator $\bar{\partial}^{\Ft}_{\sfM}$ on smooth sections
of $\Ft,$ with values in the smooth sections of $\Qt_{\,\sfM}^{0,1}\otimes\Ft,$
in 
such a way that\footnote{This is obtained by identifying $\Ft$ with its vertical bundle and first defining 
$\bar{\partial}^{\Ft}_{\sfM}$ on the smooth local sections
$s$ of $\Ft$ by setting 
\begin{equation*}\tag{$\dag$}
 \bar{\partial}^{\Ft}_{\sfM}s(X{+}iJ_{\sfM}X)=ds(X){+}J_{\Ft}ds(J_{\sfM}X),\;\;\forall 
 X\,{\in}\, HM.
\end{equation*}
In fact, when $s$ is a section, the right hand side of ($\dag$) is vertical. }
\begin{equation*}
 \bar{\partial}_{\sfM}^{\Ft}(fs_{i})=(\bar{\partial}_{\sfM}f\otimes{s}_{i})+(f\,{\cdot}\, \phiup_{i})\otimes{s}_{i},\;\;
 \forall f\in\Ci(U_{i}).
\end{equation*}
\par 
Let us use the notation $\Fq_{\sfM}$ to indicate 
the sheaf of germs of smooth sections of $\Ft_{\sfM}$ and
$\Fq^{\pq,\qq}_{\sfM}$ for the sheaf of germs 
of smooth sections
on $\sfM$ of the bundle $\Qt^{\pq,\qq}_{\,\sfM}\,{\otimes}\,\Ft.$ 
Then we can define, in general, a differential operator 
$ \bar{\partial}_{\sfM}^{\Ft}$ mapping sections of $\Fq^{\pq,\qq}_{\sfM}$
into sections of  $\Fq^{\pq,\qq+1}_{\sfM}$ 
by setting 
for $U$ open in $\sfM,$ 
$\alphaup\,{\in}\,\Qq_{\,\sfM}^{p,q}(U)$ and $\sigma\,{\in}\,\Fq_{\sfM}(U),$ 
\begin{equation*}
 \bar{\partial}^{\Ft}_{\sfM}(\alphaup\otimes\sigma)=(\bar{\partial}_{\sfM}\alphaup)\otimes\sigmaup+
 (-1)^{p+q}\alphaup\wedge\partial^{\Ft}_{\sfM}\sigmaup.
\end{equation*}
In this way we obtain 
 the \emph{tangential Cauchy-Riemann complexes with coefficients in~$\Ft$\,}: 
\begin{equation*} \vspace{2pt}
\begin{aligned} 
\begin{CD}
 0 @>>> \Fq^{\pq,0}_{\,\sfM}(U) @>{\bar{\partial}_{\sfM}^{\Ft}}>> \Fq^{\pq,1}_{\,\sfM}(U) @>>> \cdots
\end{CD}\qquad\qquad \qquad  \\
 \qquad \qquad \qquad \qquad \begin{CD}
\cdots 
@>>> \Fq^{\pq,n-1}_{\,\sfM}(U) @>{\bar{\partial}_{\sfM}^{\Ft}}>> \Fq^{\pq,n}_{\,\sfM}(U) @>>> 0.
\end{CD}
\end{aligned}
\end{equation*}
We denote its cohomology groups by \vspace{2pt}
\begin{equation*}\vspace{2pt}
 H^{\pq,\qq}(U,\Ft_{\sfM})=\dfrac{\ker\big(\bar{\partial}_{\sfM}^{\Ft}:\Fq_{\,\sfM}^{\pq,\qq}(U) \to 
 \Fq_{\,\sfM}^{\pq,\qq+1}(U)\big)}{\text{image}\big(\bar{\partial}_{\sfM}^{\Ft}:\Fq_{\,\sfM}^{\pq,\qq-1}(U) \to 
 \Fq_{\,\sfM}^{\pq,\qq}(U)\big)}.
\end{equation*}

\par

When the $CR$ structure on $\sfM$ is induced from a generic 
embedding into an 
$(n{+}k)$-dimensional complex manifold $\sfX$, the $CR$ line bundle $\Kt_{\sfM}$ is
the restriction to $\sfM$ 
of the holomorphic canonical line bundle $\Kt_{\sfX}$ of $\sfX.$ 
In general, the canonical bundle of
an abstract $CR$ manifold  
 might not even be locally $CR$ trivializable,
as explained e.g. in \cite{BH, HN,  K}.
\par
For a $CR$ submanifold $\sfM$ of an $\Nt$-dimensional projective space $\CP^{\Nt},$ 
the restrictions $\cO_{\sfM}(-1)$ and $\cO_{\sfM}(1)$ to $\sfM$ of its tautological
and hyperplane bundles are $CR$ line bundles on $\sfM,$ dual to each other.
By the Euler sequence, 
the canonical bundle $\Kt_{\CP^{\Nt}}$ of the projective space is 
isomorphic to the $(\Nt{+}1)$-th power 
$\cO_{\CP^{\Nt}}(-\Nt{-}1)$ of the tautological bundle
(see e.g. \cite[p.146]{GH}) and therefore its dual $\Kt^{\ast}_{\CP^{\Nt}},$ being isomorphic
to the $(\Nt{+}1)$-th power $\cO_{\CP^{\Nt}}(\Nt{+}1)$
of  the hyperplane 
bundle $\cO_{\CP^{\Nt}}(1),$ which is very ample,
is very ample. 
 In particular, if $\sfM$ is a generic $CR$ submanifold of $\CP^{\Nt},$ then 
its anti-canonical bundle
 $\Kt^{\ast}_{\sfM}$ 
 is the restriction to $\sfM$ of a very ample line bundle. If $\Et_{\Xs}$ is a very ample line bundle
 on a complex compact complex manifold $\Xs,$ 
then its first Chern class $\cq_{1}(\Et_{\Xs})$ is positive
and the Kodaira-Nakano theorem  tells us that this property is sufficient to ensure that
the cohomology groups
$H^{\qq}(\Xs,\Kt_{\Xs}{\otimes}\Et_{\Xs})$ vanish for $\qq>0$ (see e.g. \cite[p.154]{GH}).
On the other hand, it is known that also the anti-canonical bundles of the generalized
complex flag manifolds are positive (see e.g. \cite{AKQ, Bo-Hi}). 
They are therefore examples of the larger class of \emph{Fano varieties},
which are characterized by the ampleness of their anti-canonical bundle $\Kt_{\Xs}^{*}.$ 
\par
When $\sfX$ is a  flag manifold of a complex semisimple Lie group $\Gf,$ 
the minimal orbits in $\sfX$ of its real forms $\Gf_{\R},$
studied e.g. in \cite{AMN,Wolf69},
are examples of generic $CR$ submanifolds of Fano varieties.

\subsection{Deformations and $CR$ line bundles}\label{sub3.2}
A Cartier divisor in $\sfX$ is the effective divisor of a section of a line bundle $\Ft_{\sfX}$
on $\sfX.$ A positive $\Ft_{\sfX}$ has a power which is very ample and therefore
the support of its Cartier divisor is the support 
of the pole divisor of a global meromorphic
function $f$ 
on $\sfX.$ \par 
Let $\sfX$ be a complex space, 
 $f$ a global meromorphic function on $\sfX.$ The corresponding 
 complex line bundle  
 $\Ft_{\sfX}$ can be described in the following way: 
Let $\{U_{i}\}$ be an open covering of $\sfX$ 
by connected open subsets with the property  that, for each index $i,$ 
there are relatively prime
$f_{i}',f_{i}''\in\cO_{\sfX}(U_{i})$ 
such that $f'_{i}/f''_{i}{=}f$ on $U_{i}.$ The quotients $f'_{i}/f'_{j}{=}f''_{i}/f''_{j}=g_{i,j}$ define
nowhere vanishing holomorphic functions on $U_{i}\,{\cap}\,U_{j},$ 
which are the transition functions of 
a holomorphic line bundle $\Ft_{\sfX}$ 
on the trivialization atlas $\{U_{i}\}.$ 
Then
\begin{equation}\label{e3.2}
 D={\bigcup}_{i}\{\pct\in{U}_{i}\mid f''_{i}(\pct)=0\}
\end{equation}
is the support of the pole divisor  of $f.$ 
\par \smallskip
\begin{prop}\label{p3.2}
 Let $\sfX$ be a complex space, $D$ the 
 support of the pole divisor of 
 a global meromorphic function $f$ on $\sfX$ and $\Ft_{\sfX}$ the corresponding
 line bundle. Let $\sfM$ be a smooth $CR$ submanifold of $\sfX$ 
 and $\{\sfM_{\ttt}\}$ a deformation on $\sfM$ of the $CR$ structure $\sfM_{0}$
 induced on $\sfM$ by $\sfX$ 
 such that for each $\ttt$ 
 the $CR$ structures of $\sfM_{\ttt}$ and
 $\sfM_{0}$ agree to infinite order on $D\,{\cap}\,\sfM.$ 
 Assume that the $CR$ embedding $\sfM_{0}\,{\hookrightarrow}\,\sfX$ is generic.
 Then we can define $CR$ line bundles $\Ft_{\sfM_{\ttt}}$ on each $\sfM_{\ttt}$
 in such a way that $\{\Ft_{\sfM_{\ttt}}\}$ is a smooth deformation of
 $\Ft_{\sfM},$ and their  $CR$ structures agree to infinite order on the fibers
 over $D.$  
\end{prop}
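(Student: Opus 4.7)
The plan is to build the family $\Ft_{\sfM_{\ttt}}$ directly from the local holomorphic defining data of the pole divisor. First I would fix a trivializing cover $\{U_{i}\}$ of $\sfX$ for $\Ft_{\sfX}$ together with relatively prime holomorphic representatives $f'_{i},f''_{i}\in\cO_{\sfX}(U_{i})$ of $f=f'_{i}/f''_{i}$, so that the transitions are $g_{i,j}=f''_{i}/f''_{j}$ and $D\cap U_{i}=\{f''_{i}=0\}$. Set $h_{i}:=f''_{i}|_{\sfM}$. For every $\ttt$ the underlying smooth complex line bundle will be $\Ft_{\sfX}|_{\sfM}$ with transitions $g_{i,j}|_{\sfM}$; only the $CR$ structure will vary, which in the description recalled in Subsection~\ref{sub3.2a} amounts to specifying local $(0,1)$-forms $\phi_{i}^{(\ttt)}$ on $U_{i}\cap\sfM$ satisfying $\phi_{i}^{(\ttt)}-\phi_{j}^{(\ttt)}=g_{i,j}^{-1}\bar{\partial}_{\sfM_{\ttt}}g_{i,j}$ and $\bar{\partial}_{\sfM_{\ttt}}\phi_{i}^{(\ttt)}=0$.

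The key definition will be $\phi_{i}^{(\ttt)}:=h_{i}^{-1}\bar{\partial}_{\sfM_{\ttt}}h_{i}$, a priori only on $(U_{i}\cap\sfM)\setminus D$. The crucial point to establish is that this extends smoothly across $D\cap\sfM\cap U_{i}$. Since $h_{i}$ is holomorphic on $U_{i}\subset\sfX$ and $\sfM_{0}$ is generically $CR$ embedded in $\sfX$, one has $\bar{\partial}_{\sfM_{0}}h_{i}=0$; by the assumption that the $CR$ structures agree to infinite order on $D\cap\sfM$, the form $\bar{\partial}_{\sfM_{\ttt}}h_{i}=(\bar{\partial}_{\sfM_{\ttt}}-\bar{\partial}_{\sfM_{0}})h_{i}$ is smooth and infinitely flat along $D\cap\sfM$. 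Since $h_{i}|_{\sfM}$ vanishes only to finite positive order there, the standard division lemma produces a smooth $(0,1)$-form on all of $U_{i}\cap\sfM$, infinitely flat on $D\cap\sfM$, depending smoothly on $\ttt$, and equal to $0$ at $\ttt=0$.

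The remaining verifications are routine. The compatibility $\phi_{i}^{(\ttt)}-\phi_{j}^{(\ttt)}=g_{i,j}^{-1}\bar{\partial}_{\sfM_{\ttt}}g_{i,j}$ follows from the Leibniz rule applied to $h_{i}=g_{i,j}h_{j}$, and the formal integrability $\bar{\partial}_{\sfM_{\ttt}}\phi_{i}^{(\ttt)}=0$ follows from the identity $\bar{\partial}(h^{-1}\bar{\partial}h)=-h^{-2}\,\bar{\partial}h\wedge\bar{\partial}h$ together with $\bar{\partial}_{\sfM_{\ttt}}^{2}=0$ on functions (formal integrability of $\Ta^{0,1}_{\sfM_{\ttt}}$) and the fact that the wedge of any $(0,1)$-form with itself vanishes. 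Smooth dependence on $\ttt$ and the infinite-order agreement of the resulting $CR$ structures on the fibers over $D$ then follow at once from the corresponding properties of $\phi_{i}^{(\ttt)}$. The hard part is thus the smooth extension of $\phi_{i}^{(\ttt)}$ across $D\cap\sfM$: it relies both on the infinite flatness of $\bar{\partial}_{\sfM_{\ttt}}h_{i}$ coming from the hypothesis and on $h_{i}|_{\sfM}$ being a genuine finite-order local equation for $D\cap\sfM$, which is precisely what the generic $CR$ embedding assumption ensures (it prevents $\sfM$ from being trapped inside~$D$).
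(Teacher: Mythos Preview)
Your proposal is correct and follows essentially the same approach as the paper: both define $\phi_{i}^{(\ttt)}=(f''_{i})^{-1}\bar{\partial}_{\sfM_{\ttt}}f''_{i}$ on $(U_{i}\cap\sfM)\setminus D$, extend it by zero across $D\cap\sfM$, and justify smoothness by the infinite-order vanishing of $\bar{\partial}_{\sfM_{\ttt}}f''_{i}$ against the finite-order vanishing of $f''_{i}$. Your write-up is in fact slightly more complete than the paper's, since you spell out the Leibniz-rule compatibility and the $\bar{\partial}_{\sfM_{\ttt}}$-closedness of $\phi_{i}^{(\ttt)}$, which the paper merely asserts.
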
\begin{proof}
We keep the notation introduced at the beginning of the subsection.
The restrictions of the transition
functions $g_{i,j}$ define a $CR$ line bundle on ${M_{0}},$ because 
\begin{equation*}
 g_{i,j}^{-1}\bar{\partial}_{M}g_{i,j}=0 \;\;\text{on $U_{i}\cap{U}_{j}\cap\sfM.$}
\end{equation*}
Let us consider now an element of the
 deformation $\{\sfM_{\ttt}\}.$ 
We note that 
\begin{equation*}
g_{j,i} \bar{\partial}_{\sfM_{\ttt}}g_{i,j}=\frac{f''_{j}}{f''_{i}}
\frac{f''_{j}\bar{\partial}_{\sfM_{\ttt}}f''_{i}-f''_{i}\bar{\partial}_{\sfM_{\ttt}}f''_{j}}{
f''_{j}{}^{2}} = \frac{\bar{\partial}_{\sfM_{\ttt}}f''_{i}}{f''_{i}}-\frac{\bar{\partial}_{\sfM_{\ttt}}f''_{j}}{f''_{j}},\quad
\text{on $U_{i}\cap{U}_{j}\cap\sfM$}
\end{equation*}
is the difference of two smooth $\bar{\partial}_{\sfM_{\ttt}}$-closed forms, 
the first one defined on $U_{i}{\cap}\,\sfM,$ the second on $U_{j}{\cap}\,\sfM.$
Indeed 
\begin{equation*}
 \phiup_{i}^{(\ttt)}= 
\begin{cases} f''_{i}{}^{-1}\cdot \bar{\partial}_{\sfM_{\ttt}}f''_{i},
 & \text{on $(U_{i}\cap\sfM){\backslash}D,$}\\
 0 , & \text{on $U_{i}\cap\sfM\cap{D},$}
\end{cases}
\end{equation*}
is a smooth $\bar{\partial}_{\sfM_{\ttt}}$-closed $(0,1)$ form on $U_{i}\,{\cap}\,\sfM,$ because
$\bar{\partial}_{\sfM_{\ttt}}f''_{i}$ vanishes to infinite order on $D\,{\cap}\,U_{i}{\cap}\,\sfM$ by the
assumption that the $CR$ structures of $\sfM$ and $\sfM_{\ttt}$ agree to infinite
order on $D\,{\cap}\,\sfM,$ while $f''_{i},$ which extends holomorphically 
to the neighourhood  $U_{i}$ of $U_{i}\,{\cap}\,\sfM$ in $\sfX,$ 
cannot vanish to infinite order at any point of
$D\,{\cap}\,\sfM.$  \par 
The complex smooth line bundle $\Ft_{\sfM}$
is obtained by gluing  the trivial line bundles $\{U_{i}\,{\times}\,\C\}$ by the
transition functions $\{g_{i,j}\}.$ Let $\sigmaup_{i}$ be the section $U_{i}\,{\times}\,\{1\}$
on $U_{i}.$ The $CR$ structure on $\Ft_{\sfM_{\ttt}}$ is obtained by setting 
\begin{equation}\label{e3.3}
 \bar{\partial}^{\Ft}_{\sfM_{\ttt}}\sigmaup_{i}=\phiup^{(\ttt)}_{i}{\cdot}\sigmaup_{i}\;\;\;\text{on $U_{i}$
 for all $i.$}
\end{equation}
Clearly it agrees to infinite order on $D$ with that of $\Ft_{\sfM_{0}}.$
\end{proof}

\section{An example}\label{secex}
Let $m$ be an integer $\geq{2}$ and 
$\sfM$ the quadric hypersurface of $\CP^{m}$ described in homogeneous coordinates by 
\begin{equation*}
 \sfM=\{z_{0}\bar{z}_{0}+z_{1}\bar{z}_{1}=z_{2}\bar{z}_{2}+\cdots+z_{m}\bar{z}_{m}\}.
\end{equation*}
The rational function $\zetaup_{1}\,{=}\,z_{1}/z_{0}$ has divisor $(\zetaup_{1})=D_{1}-D_{0}$ with
\begin{equation*}
 D_{0}=\{z_{0}=0\},\;\;\; D_{1}=\{z_{1}=0\}\;\;\;\text{and} \;\; D_{1}\cap{D}_{2}\cap\sfM=\emptyset.
\end{equation*}
Since the point $[1,0,\hdots,0]$ does not belong to $\sfM,$ 
the projection map 
\begin{equation*} \piup:
\CP^{m}{\backslash} \{[1,0,\hdots,0]\}\ni 
 [z_{0},z_{1},\hdots,z_{m}]\longrightarrow [z_{1},\hdots,z_{m}]\in\CP^{m-1}
\end{equation*}
is well defined on $\sfM.$ Its image is the complement in $\CP^{m-1}$
of the open ball 
$$\Bs=\{z_{2}\bar{z}_{2}{+}\cdots{+}z_{m}\bar{z}_{m}\,{<}\,z_{1}\bar{z}_{1}\}.$$ 
 \par
Let $F$ be a smooth function on $\CP^{m-1}$ which is holomorphic on $\Bs$
and $\qct$ any point of its boundary $\partial\Bs.$
If we can find an open neighbourhood $V$ of $\qct$ in $\CP^{m-1}$ and a solution 
$\vq$ to the Cauchy problem 
\begin{equation*} 
\begin{cases}
\vq\in\Ci(V{\backslash}\bar{\Bs})\cap\Cc^{0}(V{\backslash}\Bs),\\
 \bar{\partial}\vq=\bar\partial{F}, \qquad \text{on $V{\backslash}\bar{\Bs},$}\\
 \vq=0, \quad\;\qquad \text{on $V\cap\partial\Bs,$}
\end{cases}
\end{equation*}
then 
\begin{equation*}
 F'= 
\begin{cases}
 F, & \text{on $V\cap\Bs,$}\\
 F-\vq, & \text{on $V{\backslash}\Bs$}
\end{cases}
\end{equation*}
continues analytically $F|_{\Bs}$ to the point $\qct.$\par  
There are holomorphic functions
on $\Bs$ which can be continued smoothly, but not analytically, to all points of $\partial\Bs.$
Thus we can find $F\,{\in}\,\Ci(\CP^{m-1}),$ holomorphic on $\Bs,$ 
such that $F|_{\Bs}$ has no analytic continuation to any point of $\partial\Bs$ and moreover,
since $\partial\Bs\,{\cap}\,\{z_{1}=0\}\,{=}\,\emptyset,$ 
we can require that its support does not
intersect the hyperplane $\{z_{1}\,{=}\,0\}.$
Then $\alphaup=\bar\partial{F}$ is a smooth $(0,1)$ form on $\CP^{m-1}$ with the properties:
\begin{itemize}
\item $\bar\partial\alphaup=0$ and $\supp(\alphaup)\subset\CP^{m-1}{\backslash}\Bs,$ 
$\supp(\alphaup)\cap\{z_{1}=0\}=\emptyset$;
 \item If   
 $V$ is any open neighourhood
 of a point $\qct\,{\in}\,\partial\Bs$ and \par
 \centerline{
 $\vq\,{\in}\,\Ci(V{\backslash}\bar{\Bs})\,{\cap}\,\Cc^{0}(V{\backslash}\Bs)$ 
 satisfies $\bar{\partial}\vq\,{=}\,\alphaup$ on $V{\backslash}\bar{\Bs},$ }\par\noindent
then $\vq$ is not $0$ at some point of $V\,{\cap}\,\partial\Bs.$  
\end{itemize}
\par 
Let us fix a $(0,1)$ form $\alphaup$ on $\CP^{m-1}$ with these properties. 
\par \smallskip
We consider the coordinate charts $(U_{0},\zetaup),$ $(U_{1},\xiup)$ 
in $\CP^{m}$ with 
\begin{align*}
 U_{0}=\{z_{0}\neq{0}\}=\CP^{m}{\backslash}D_{0},\;\; \zetaup_{1}=\frac{z_{1}}{z_{0}},\;\zetaup_{2}=
\frac{z_{1}}{z_{0}},\hdots , \; \zetaup_{m}=\frac{z_{m}}{z_{0}},\\
U_{1}=\{z_{1}\neq{0}\}=\CP^{m}{\backslash}D_{1},\;\; \xiup_{1}=\frac{z_{0}}{z_{1}},\;\xiup_{2}=
\frac{z_{2}}{z_{1}},\hdots , \; \xiup_{m}=\frac{z_{m}}{z_{1}}.
\end{align*}
\par 
The pullback $\piup^{*}\alphaup$ vanishes to infinte order
on the points of $\sfM\,{\cap}\,D_{0},$ which are mapped by $\piup$ onto
the boundary of $\Bs.$ Then $\zetaup_{1}\,{\cdot}\,\piup^{*}\alphaup,$
defined on $\sfM{\backslash}D_{0},$ extends to a 
smooth $(0,1)$ form $\betaup$ on $\sfM,$ 
vanishing to infinite order on $\sfM\,{\cap}\,D_{0},$ 
whose support is contained in $U_{1}\,{\cap}\,\sfM.$ 
\par
We can define a smooth $CR$ structure $\sfM_{\ttt}$ on $\sfM$ by requiring that
its canonical bundle has sections 
\begin{equation*} 
\begin{cases} \sq_{\ttt,0}=
 \sfd\zetaup_{1}\wedge\sfd\zetaup_{2}\wedge\cdots\wedge\sfd \zetaup_{m}, & \text{on $U_{0}\cap\sfM
 {\backslash}\supp(\betaup)$,}\\
 \sq_{\ttt,1}=
 (\sfd \xiup_{1}+\ttt\betaup) \wedge\sfd\xiup_{2}\wedge\cdots\wedge\sfd\xiup_{m}, &
 \text{on $U_{1}\cap\sfM.$}
\end{cases}
\end{equation*}
\par
Note that 
\begin{align*}
 \sfd\sq_{\ttt,1}&=\ttt \left( -\frac{\sfd\xiup_{1}}{\xiup_{1}^{2}}\wedge\piup^{*}(\alphaup)
 +\frac{1}{\xiup}\piup^{*}(\sfd\alphaup)
 \right) \wedge\sfd\xiup_{2}\wedge\cdots\wedge\sfd\xiup_{m}
 \\ &=
 -\ttt\frac{\sfd\xiup_{1}}{\xiup_{1}}\wedge\betaup\wedge\sfd\xiup_{2}\wedge\cdots\wedge\sfd\xiup_{m}
 = \dfrac{\ttt\;}{\xiup_{1}}\betaup\wedge\sq_{\ttt,1}
\end{align*}
because  
\begin{equation*}\tag{$\dag$}\left\{
\begin{aligned}
 \piup^{*}(\sfd\alphaup)\wedge\sfd\xiup_{2}\wedge\cdots\wedge\sfd\xiup_{m}=
  \piup^{*}(\sfd\alphaup\wedge\sfd\xiup_{2}\wedge\cdots\wedge\sfd\xiup_{m})\qquad \\
  =
   \piup^{*}((\bar\partial\alphaup)\wedge\sfd\xiup_{2}\wedge\cdots\wedge\sfd\xiup_{m})=0.
\end{aligned}\right.
\end{equation*}
Note that  the form $\betaup/\xiup_{1}\,{=}\,\piup^{*}\alphaup/\xiup_{1}^{2}$ 
extends to a smooth form $\betaup',$ defined on $\sfM$ and
vanishing to infinite order on $D_{0},$ so that 
\begin{equation*} \tag{$\ddag$}
\sfd\sq_{\ttt,0}=0\,\;\text{on $U_{0}\cap\sfM{\backslash}\supp(\betaup)$}\;\;\text{and}
\;\; 
 \sfd\sq_{\ttt,1}=\ttt\,{\cdot}\,\betaup'\wedge\sq_{\ttt,1}\;\;\text{on $U_{1}\cap\sfM$}
\end{equation*}
 shows that the line bundles 
 $\Kt_{\sfM_{\ttt}}$ 
 define
formally integrable complex valued distributions.\par

 \par \smallskip
These  $CR$ structures agree to infinite order on $\sfM{\cap}D_{0}$ 
with the $CR$ structure $\sfM_{0}$ on $\sfM$ which is induced by its embedding into $\CP^{m}$ 

Their canonical bundles are isomorphic  
complex line bundles,
since the transition functions are in all cases $g_{0,1}{=}\,\sq_{\ttt,0}{/}\sq_{\ttt,1}\,{=}\,
{-}\zetaup_{1}^{m+1}\,{=}\,{-}\xiup_{1}^{-m-1},$ $g_{1,0}{=}g_{0,1}^{-1}$ 
on $\sfM{\backslash}(D_{0}{\cup}D_{1}{\cup}\supp(\betaup)).$ \par
We note that $\xiup_{2},\hdots,\xiup_{m}$ are $CR$ functions on $\sfM_{\ttt}{\backslash}{D}_{0}$
for all $\ttt.$ A necessary  condition for $\sfM_{\ttt}$ being locally
embeddable at a point $\pct$ of $D_{0}$ is that one could find an open neighbourhood 
$U'$ of $\pct$ in $U_{1}\,{\cap}\,\sfM$ and a function $\uq\,{\in}\,\Ci(U')$ such that 
\begin{equation*}
 \sfd(e^{\uq}\sq_{\ttt,1})=0 \;\;\text{on $U'$}. 
\end{equation*}
This equation can be rewritten, by factoring out $e^{\uq}\zetaup_{1}$ 
and taking into account $(\dag),$ as
\begin{equation*}\tag{$*$}
 \left(\xiup_{1}\sfd\uq +\dfrac{\ttt\;}{\xiup_{1}}\piup^{*}\alphaup\right)
  \wedge\sfd\xiup_{2}\wedge\cdots\wedge\sfd\xiup_{m}=0\;\; \text{on $U'{\backslash}{D}_{0}.$}
\end{equation*}

The fibers of $\piup$ above points of $\CP^{m-1}{\backslash}\bar{\Bs}$ are circles and 
reduce to single points over $\partial\Bs.$  Since the projection $\piup$ is proper, 
by shrinking we may assume that $\piup^{-1}(\piup(U'))=U'.$ 
By integrating over the fibers we define 
\begin{equation*}
 \vq(\qct)=\frac{-1}{2\pi{i}}\oint_{\piup^{-1}(\qct)}\uq\, \xiup_{1} d\xiup_{1},\;\;\text{for $\qct\in\piup(U'){\backslash}
 \partial\Bs.$} 
\end{equation*}
This is a smooth function on $\piup(U'){\backslash}
 \partial\Bs,$ which extends to a continuous function, that we still denote by $\vq,$
 vanishing on $\piup(U')\cap\partial\Bs.$ 
Then from $(*)$ we obtain that 
\begin{equation*}
 \sfd\vq\wedge\sfd\xiup_{2}\wedge\cdots\wedge\sfd\xiup_{m}=
 \ttt\,\alphaup\wedge\sfd\xiup_{2}\wedge\cdots\wedge\sfd\xiup_{m},\;\;\;\text{on $\piup(U'){\backslash}\bar{\Bs}.$}
\end{equation*}
This is equivalent to 
\begin{equation*}
 \bar\partial\vq = \ttt\,\alphaup,\;\;\; \text{on $\piup(U'){\backslash}\bar{\Bs}$}
\end{equation*}
and hence contradicts the choice of $\alphaup$ if $\ttt{\neq}0,$ 
proving that $\sfM_{\ttt}$ cannot be locally embedded at any point $\pct\,{\in}\,D_{0}$
if $\ttt{\neq}0.$ 
\par 
\smallskip
This example from \cite{HN2} shows that we can construct 
a smooth deformation   
$\{\sfM_{\ttt}\}$ of the $CR$ structure $\sfM_{0}$ of the quadric $\sfM$ such that 
no $\sfM_{\ttt}$ 
can be locally embedded into a complex manifold at any point of $D_{0}$ 
if $\ttt{\neq}0.$ \par
The $CR$ structure of the $\sfM_{\ttt}$'s are of type $(m{-}1,1)$ and $\sfM_{0}$
is $1$-pseu\-do\-con\-cave. Hence the example
shows that Theorem~\ref{main} cannot be valid if we only require
$\sfM_{0}$ being $1$-pseudoconcave. 
\par\smallskip

\begin{rmk}
Equations $(\ddag)$ is of the type of those we need to solve to construct 
projective embeddings in \S\ref{sec6}. It 
can be rewritten as 
\begin{equation*} 
\begin{cases}
 \bar{\partial}^{\,\Kt}_{\sfM_{\ttt}}\sq_{\ttt,0}=0,\\ 
 \bar{\partial}^{\,\Kt}_{\sfM_{\ttt}}\sq_{\ttt,1}=
 \ttt{\cdot}\betaup'\,\sq_{\ttt,1}.
\end{cases}
\end{equation*}
We also have 
\begin{equation*}
 \bar{\partial}_{\sfM_{\ttt}}\uq = \bar{\partial}_{\sfM_{0}}\uq 
 -\ttt\,{\cdot}\,\frac{\partial{u}}{\partial\xiup_{1}}\,\betaup.
\end{equation*}
Then we obtain 
\begin{equation*}
 \bar{\partial}^{\,\Kt}_{\sfM_{\ttt}}\left(\frac{1}{\xiup_{1}}\sq_{\ttt,1}^{-1}\right)=
 -\ttt\left( \frac{1}{\xiup_{1}^{2}} \betaup+\frac{1}{\xiup_{1}}\betaup'\right)\,\sq_{\ttt,1}^{-1}
 =-2\,\frac{\ttt\;}{\xiup_{1}^{3}}
 \piup^{*}\alphaup\,\sq_{\ttt,1}^{-1}.
\end{equation*}
An argument similar to the one used above shows that the equation  
\begin{equation*}\tag{$**$}
 \bar{\partial}(\wq \cdot \sq_{\ttt,1}^{-1})= (\bar{\partial}_{\sfM_{0}}\wq-\ttt\frac{\partial\wq}{\partial\xiup_{1}}\betaup
-\ttt\cdot\wq \betaup') \sq_{\ttt,1}^{-1}=-2\,\frac{\ttt\;}{\xiup_{1}^{3}}
 \piup^{*}\alphaup\,\sq_{\ttt,1}^{-1}
\end{equation*}
has no smooth solution on $\sfM.$ 
Indeed  $(**)$ can be rewritten in the form 
\begin{equation*}
\left(\sfd\wq-\frac{\ttt}{\xiup_{1}^{2}}\frac{\partial(\xiup_{1}\wq)}{\partial\xiup_{1}}\piup^{*}\alphaup+
\frac{2\ttt}{\xiup_{1}^{3}}\piup^{*}\alphaup\right)\sfd\xiup_{2}\wedge\cdots\wedge\sfd\xiup_{m}=0
\end{equation*}
Multiplying by $\xiup^{2}_{1}$ and integrating on the fiber, we see that the term in the middle
vanishes, while 
\begin{equation*}
 \vq(\qct)=\frac{-1}{4\pi{i}}\oint_{\piup^{-1}(\qct)} (\xiup_{1}^{2}\wq)\sfd\xiup_{1}
\end{equation*}
 yields a solution in $\Ci(\CP^{m-1}{\backslash}\bar{\Bs})\,{\cap}\,\Cc^{0}(\CP^{m-1}{\backslash}\Bs),$
 vanishing on $\partial\Bs,$ of 
 the equation $\bar{\partial}\vq\,{=}\,\alphaup$ on $\CP^{m-1}{\backslash}\bar{\Bs},$
 contradicting the choice of $\alphaup.$ \par
 This adds a further argument for the need of conditions stronger than 1-pseudoconcavity
 for the validity of the conclusions of Theorem~\ref{main}.
\end{rmk}

\section{Vanishing results}\label{sec5}
We keep the notation introduced in \S\ref{secnot}, \S\ref{secprel}. 
\subsection{Embedded $CR$ manifolds}
We have the following vanishing result:
\begin{thm} \label{vanish}
Assume that $\sfM$ is a 2-pseudoconcave generic smooth 
$CR$ submanifold of type $(n,k)$ of a 
complex variety $\Xs.$ Let $\Ft_{\Xs}$ be a positive line bundle on $\Xs$ and $\Ft_{\sfM}$ its
restriction to $\sfM.$ Then we can find a positive integer $\ell_{0}$ such that 
\begin{equation}
 H^{\pq,\qq}(\sfM,\Ft^{\,-\ell}_{\!\!\sfM})=0,\quad \forall \ell{\geq}\ell_{0},\; 0{\leq}\pq{\leq}n{+}k,\;\qq=0,1.
\end{equation}
 \end{thm}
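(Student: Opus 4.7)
The plan is to reduce the statement to a Bochner--Kodaira--Nakano type estimate for the tangential $\bar{\partial}_{\sfM}$-complex twisted by the $CR$ line bundle $\Ft^{-\ell}_{\sfM}$, and to combine the 2-pseudoconcavity of $\sfM$ with the positivity of $\Ft_{\Xs}$ so that the curvature term in the identity dominates the zeroth-order errors on $(\pq,\qq)$-forms for $\qq=0,1$ and $\ell$ large. Concretely, I would first fix a smooth Hermitian metric $h$ on $\Ft_{\Xs}$ whose Chern curvature form $\omegaup=ic_{1}(\Ft_{\Xs},h)$ is strictly positive on an open neighbourhood of $\sfM$ in $\Xs_{\mathrm{reg}}$, and then endow $\sfM$ with a Hermitian structure on $\C\Ta_{\sfM}$ compatible with the restriction of $\omegaup$ to the $CR$ directions. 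This determines $L^{2}$-norms on the spaces of smooth $(\pq,\qq)$-forms with values in $\Ft^{-\ell}_{\sfM}$ for which the curvature contribution from $\Ft^{-\ell}_{\sfM}$ is $-\ell\,\omegaup$, growing linearly in $\ell$ with the correct sign for low-degree vanishing.

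The case $\qq=0$ I would handle by extension rather than by pure Hilbert space methods. A $\bar{\partial}^{\Ft^{-\ell}}_{\sfM}$-closed element of $\Fq^{\pq,0}_{\sfM}$ is a $CR$ section of $\Omega^{\pq}_{\sfM}\otimes\Ft^{-\ell}_{\sfM}$; by the Hartogs--Bochner type extension theorem for compact 2-pseudoconcave generically embedded $CR$ submanifolds established in \cite{HN1}, such a section extends uniquely to a holomorphic section of $\Omega^{\pq}_{\Xs}\otimes\Ft^{-\ell}$ on a full open neighbourhood $\Uq$ of $\sfM$ in $\Xs$. Once $\ell$ is chosen so large that $\Ft^{\ell}_{\Xs}$ is very ample (possible by positivity), a pointwise-norm / maximum-principle argument against the strict positivity of $\omegaup$ forces any such holomorphic section of $\Ft^{-\ell}$ to vanish identically on $\Uq$. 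For the case $\qq=1$ I would invoke the Bochner--Kodaira--Nakano identity directly for the twisted tangential complex on $(\pq,1)$-forms: its curvature operator splits as the sum of a fixed Levi-form contribution, bounded below thanks to the 2-pseudoconcavity of $\sfM$ and independent of $\ell$, plus $\ell$ times the restriction of $\omegaup$ to the $CR$ directions, which is strictly positive and grows linearly in $\ell$; hence for $\ell\ge\ell_{0}$ the sum is strictly positive definite on $(\pq,1)$-forms and every $\bar{\partial}^{\Ft^{-\ell}}_{\sfM}$-closed form of that bidegree is $\bar{\partial}^{\Ft^{-\ell}}_{\sfM}$-exact.

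The main obstacle is that the tangential $\bar{\partial}_{\sfM}$-complex is only subelliptic along the characteristic directions $\Ha^{0}_{\sfM}$, so the Bochner--Kodaira--Nakano identity in higher $CR$ codimension carries extra zeroth-order error terms that cannot be ignored. This is precisely where the hypothesis of 2-pseudoconcavity is essential: the Levi-form estimates of \cite{HN1} ensure that in the low bidegrees $\qq=0,1$ those error terms are bounded by a constant depending only on $\sfM$, so the linear-in-$\ell$ positive contribution of the Chern form of $\Ft_{\Xs}$ eventually dominates them and fixes the common threshold $\ell_{0}$, uniform in $\pq$.
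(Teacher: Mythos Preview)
Your strategy is quite different from the paper's, and parts of it do not close.

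The paper never works with a Bochner--Kodaira--Nakano identity on $\sfM$. Its entire argument is a reduction to the ambient space: by \cite{HN1}, a compact $2$-pseudoconcave generic $\sfM\hookrightarrow\Xs$ has tubular neighbourhoods $U\subset\Xs$ that are $(n{+}k{-}2)$-pseudoconcave in the sense of Andreotti--Grauert, and for which the restriction maps $H^{\qq}(U,\Omega^{\pq}(\Ft_{U}))\to H^{\pq,\qq}(\sfM,\Ft_{\sfM})$ are isomorphisms for $\qq=0,1$ and \emph{every} holomorphic line bundle $\Ft_{U}$. Both $\qq=0$ and $\qq=1$ are then handled in one stroke by \cite[Prop.~27]{AG} on $U$: since $U$ (and hence the total space of $\Ft_{U}^{-1}$) is strongly $(n{+}k{-}2)$-concave, $H^{\qq}(U,\Omega^{\pq}\otimes\Ft_{\Xs}^{-\ell})=0$ for $\ell\geq\ell_{0}$ and $\qq=0,1$.

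Your $\qq=0$ step has a genuine gap. You correctly extend a $CR$ section to a holomorphic section of $\Omega^{\pq}\otimes\Ft^{-\ell}$ on an open $\Uq\subset\Xs$, but then invoke a ``pointwise-norm / maximum-principle argument'' to force it to vanish. That argument requires compactness; $\Uq$ is not compact, and a negative line bundle on a non-compact complex manifold can perfectly well carry nonzero holomorphic sections. What actually kills $H^{0}$ here is the Andreotti--Grauert pseudoconcavity of $\Uq$---precisely the mechanism the paper uses and that your sketch omits.

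Your $\qq=1$ step is more problematic than you acknowledge. Even granting a tangential BKN identity, the curvature of $\Ft^{-\ell}$ contributes $-\ell\omegaup$, and on $(\pq,1)$-forms the sign of the resulting zeroth-order operator depends on $\pq$: already in the compact K\"ahler case, Nakano's inequality only yields vanishing of $H^{\pq,1}$ with values in a negative bundle for $\pq+1<\dim$, not for \emph{all} $\pq$ up to $n{+}k$ as the theorem requires. So ``linear-in-$\ell$ positive contribution'' is not correct across the full range of $\pq$, and the Levi-form terms from $2$-pseudoconcavity do not fix this---\cite{HN1} supplies subelliptic estimates, not a curvature inequality of the shape you describe. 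The paper avoids all of this by passing to $U$ and invoking the Andreotti--Grauert coarse vanishing theorem, which is genuinely asymptotic in $\ell$ and uniform in $\pq$.
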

\begin{proof}
We know from \cite{HN1} that a generic $2$-pseudoconcave 
$\sfM$ 
has a fundamental system of
tubular neighborhoods $U$ in $\Xs$ that are 
$(n{-}2)$-pseu\-do\-con\-vex 
and $(n{+}k{-}2)$-pseu\-do\-con\-cave in the sense of 
Andreotti-Grauert\footnote{A real valued smooth function $\phiup$ on an $\Nt$-dimensional
complex manifold $\Xs$ is strongly $\qq$-pseudoconvex in the sense of \cite{AG} 
at points where its complex Hessian has at least $(\Nt{-}\qq{+}1)$ positive eigenvalues. 
Then $\Xs$ is called
\emph{strictly $\qq$-pseudoconvex} if there is an exhaustion function 
$\phiup\,{\in}\,\Ci(\Xs,\R)$ which is 
strictly $\qq$-pseudoconvex outside a compact subset of $\Xs$
and 
strictly $\qq$-pseudoconcave 
if there is an exhaustion function $\phiup\,{\in}\,\Ci(\Xs,\R)$ such that $({-}\phiup)$ is
strictly $\qq$-pseudoconvex outside a compact subset of $\Xs.$} 
and for which, for all
$0{\leq}\pq{\leq}n{+}k,$  
the natural restriction maps
$$H^{\qq}(U, \Omega^{\pq}(\Ft_{U})) \longrightarrow H^{\qq}(M, \Omega^{\pq}(\Ft_{\sfM})),\;\;
\qq=0,1,$$ are 
isomorphisms for any holomorphic line bundle $\Ft_{U}$ over $U$.  
\par 
This was proved in \cite[Thm.4.1]{HN1} for the trivial line bundle. 
The arguments of the proof given there
apply also in general,  yielding the isomorphism for
arbitrary holomorphic vector 
bundles.
The statement is then a consequence of 
 \cite[Prop.27]{AG}, because $U$ is 
 \textit{strongly $(n{+}k{-}2)$-concave} in the sense of Andreotti-Grauert and hence 
 also $\Ft^{-1}_{U}\,{\coloneqq}\,\left.\Ft^{-1}_{\Xs}\right|_{U}$ is 
 strongly $(n{+}k{-}2)$-concave as a complex space, according to \cite{AG}
 and therefore there is a positive integer $\ell_{0}$ such that 
 $H^{\pq,\qq}(U,\Ft^{\,-\ell}_{\Xs})\,{=}\,0$ for $\ell{\geq}\ell_{0},$ 
 $0{\leq}\pq{\leq}n{+}k$ and $\qq\,{=}\,0,1.$ \end{proof}
 \subsection{Some estimates for abstract $CR$ manifolds}\label{sec5.2}
 Let $\sfM$ be an abstract compact 
 $CR$ manifold of type $(n,k)$ and $\Ft_{\sfM}$ a complex $CR$ line
 bundle on $\sfM.$ 
 To discuss its cohomology groups $H^{\pq,\qq}(\sfM,\Ft_{\sfM})$ 
 by using functional analytic methods, it is convenient to fix 
a smooth Riemannian metric on $\sfM$ and a 
smooth Hermitian norm on the fibers of $\Ft_{\sfM}.$  
In this way we are able,   for each real $r{\geq}0,$ 
to define in  a standard way
on the smooth sections of $\Qt_{\sfM}^{\pq,\qq}\,{\otimes}\,\Ft_{\sfM}$  
 the Sobolev 
 $\sfW^{r}$-norm, involving the $L^{2}$-norms of 
 their fractional derivatives up to order $r.$ 
 The completions of $\Fq^{\pq,\qq}(\sfM)$ 
 with respect to these norms are Hilbert spaces,
 that we denote by $\sfW^{r}(\sfM,\Qt_{\sfM}^{\pq,\qq}{\otimes}\Ft_{\sfM}).$ 
We use the notation
\begin{equation*}
 (\vq_{1}|\vq_{2})_{r}\;\;\text{and}\;\; \|\vq\|_{r}=\sqrt{(\vq|\vq)_{r}},\;\;\;\text{for
 $\vq_{1},\vq_{2},\vq\in\sfW^{r}(\sfM,\Qt_{\sfM}^{\pq,\qq}\otimes\Ft_{\sfM})$}
\end{equation*}
for their Hermitian scalar products and norms (see e.g. \cite{He}).

 In  \cite{HN1} subelliptic estimates were proved for the $\bar{\partial}_{\sfM}$
 complex under pseudoconcavity conditions. Since the arguments are local,
 they trivially extend to forms with coefficients in a 
complex $CR$ line bundle. We have in particular  
the following statement.
\begin{prop}\label{p5.2}
 Let $\sfM$ be a compact  smooth 
  abstract 
$CR$ manifold of type $(n,k)$ and
 $\Ft_{\sfM}$ any complex $CR$ line bundle on $\sfM.$ Assume that
 $\sfM$ is $2$-pseu\-do\-con\-cave. Then, for every 
  $0{\leq}\pq{\leq}n{+}k$ and every  
 real $r{\geq}0$ there
 is a positive constant $c_{r,\pq}{>}0$ such that 
\begin{equation}\vspace{-20.5pt} \label{e5.2} \begin{cases}
c_{r,\pq}\cdot\|\uq\|_{r+(1/2)} \leq\|\bar{\partial}^{\Ft}_{\sfM}\uq\|^{2}_{r},\quad\forall 
\uq\in 
 \Fq^{\pq,0}(\sfM),\\
 c_{r,\pq}\cdot\|\vq\|_{r+(1/2)}\leq \|\bar{\partial}^{\Ft}_{\sfM}\vq\|^{2}_{r}
 + \|(\bar{\partial}^{\Ft}_{\sfM})^{\ast}\vq\|^{2}_{r}+\|\vq\|_{0}^{2},
 \;\; \forall \vq\in 
 \Fq^{\pq,1}(\sfM).
 \end{cases}
\end{equation}\qed
\end{prop}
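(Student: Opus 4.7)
The plan is to reduce the bundle-valued estimates to the scalar case treated in \cite{HN1}, which proved \eqref{e5.2} when $\Ft_{\sfM}$ is the trivial line bundle. The key observation is that the subelliptic $(1/2)$-estimate is a statement about the principal symbol of the $\bar{\partial}^{\Ft}_{\sfM}$ complex, and this principal symbol is the same as that of the scalar $\bar{\partial}_{\sfM}$ complex because, in any local trivialization, the two operators differ only by a zeroth-order multiplication term (the connection $1$-form $\phiup_{i}$ from \S\ref{sub3.2a}).

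First, I would cover the compact manifold $\sfM$ by finitely many open sets $\{U_{i}\}_{i\in I}$ on which $\Ft_{\sfM}$ admits local CR trivializations $s_{i}$, and pick a subordinate smooth partition of unity $\{\chiup_{i}\}$. For a section $\sigmaup\in\Fq^{\pq,\qq}_{\sfM}$, write $\sigmaup=\sum_{i}\chiup_{i}\sigmaup$ and express each $\chiup_{i}\sigmaup=\alphaup_{i}\otimes s_{i}$ with $\alphaup_{i}\in\Qq^{\pq,\qq}_{\sfM}(U_{i})$ compactly supported in $U_{i}$. By the definition recalled in \S\ref{sub3.2a},
\begin{equation*}
\bar{\partial}^{\Ft}_{\sfM}(\alphaup_{i}\otimes s_{i})=(\bar{\partial}_{\sfM}\alphaup_{i}+(-1)^{p+q}\alphaup_{i}\wedge\phiup_{i})\otimes s_{i},
\end{equation*}
so $\bar{\partial}^{\Ft}_{\sfM}$ and its formal adjoint have the same principal symbols as $\bar{\partial}_{\sfM}$ and $\bar{\partial}_{\sfM}^{\ast}$, up to multiplication by smooth sections of $\Ft_{\sfM}$ and its dual. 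Hence the scalar subelliptic estimates of \cite[Thm.~5.2 and Thm.~6.1]{HN1} apply to the local scalar forms $\alphaup_{i}$ verbatim, yielding the inequalities \eqref{e5.2} for each $\chiup_{i}\sigmaup$, with the error created by the multiplier $\phiup_{i}$ being of order zero and therefore absorbable.

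Next, I would globalize by summing. Standard commutator estimates control $[\bar{\partial}^{\Ft}_{\sfM},\chiup_{i}]$ and $[(\bar{\partial}^{\Ft}_{\sfM})^{\ast},\chiup_{i}]$ as zeroth-order operators, so
\begin{equation*}
\sum_{i}\|\bar{\partial}^{\Ft}_{\sfM}(\chiup_{i}\sigmaup)\|_{r}^{2}\leq C\bigl(\|\bar{\partial}^{\Ft}_{\sfM}\sigmaup\|_{r}^{2}+\|\sigmaup\|_{r}^{2}\bigr),
\end{equation*}
and analogously for the adjoint. For $\qq=0$, the top equation of \eqref{e5.2} from \cite{HN1} on each $U_{i}$ contains \emph{no} $\|\uq\|_{0}^{2}$ term on the right, which reflects the fact that there are no antiholomorphic CR functions on a $2$-pseudoconcave manifold; combining with standard interpolation to absorb the $\|\sigmaup\|_{r}^{2}$ term into the subelliptic $\|\sigmaup\|_{r+1/2}^{2}$ gain on the left (possibly after increasing constants) yields the first estimate. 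For $\qq=1$ the $\|\vq\|_{0}^{2}$ term is allowed on the right, so absorption is immediate.

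The main difficulty I anticipate is purely bookkeeping: carefully tracking that the zeroth-order bundle terms, the commutators with $\chiup_{i}$, and the transition-function errors all genuinely sit at a lower order than the $(1/2)$-gain provided by the scalar estimate, so that they can be absorbed uniformly in $r$. The $2$-pseudoconcavity hypothesis enters only through the invocation of \cite{HN1} and is what guarantees that the scalar estimate holds in degree $\qq=0$ without the $\|\cdot\|_{0}^{2}$ correction, which is essential for the sharp form of the first inequality in \eqref{e5.2}.
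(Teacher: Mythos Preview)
Your approach is exactly the paper's: the text preceding the proposition says that the subelliptic estimates of \cite{HN1} are local and therefore ``trivially extend to forms with coefficients in a complex $CR$ line bundle,'' and the proposition is then stated with a bare \qed. Your localization/partition-of-unity/zeroth-order-perturbation argument is the standard way to spell this out, and for the second inequality in \eqref{e5.2} it works without difficulty.

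There is, however, a genuine gap in your treatment of the first inequality. Your interpolation step only reduces the commutator error $\|\sigmaup\|_{r}^{2}$ to $\|\sigmaup\|_{0}^{2}$, not to zero; and your appeal to ``no antiholomorphic CR functions'' concerns the trivial bundle, not an arbitrary $CR$ line bundle $\Ft_{\sfM}$, which may well have nonzero $CR$ sections (indeed Proposition~\ref{p5.3a} only asserts that $\cN^{\pq,0}_{\Ft}(\sfM)$ is finite-dimensional, not zero). Taking $\uq$ to be any such nonzero section shows that the first line of \eqref{e5.2}, as literally written, cannot hold. This is almost certainly a typographical slip in the paper: compare with the parallel estimate \eqref{e5.2a} in Proposition~\ref{p5.4}, where the term $\|\uq\|_{0}^{2}$ does appear on the right-hand side for $\qq=0$. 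With that correction the first inequality follows from your argument exactly as the second one does, and nothing further is needed.
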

\vspace{4pt}
Let us define the subspace 
 \begin{equation}\label{e5.3a}
 \cN_{\Ft}^{\pq,1}(\sfM)=\{\vq\in
 \Fq^{\pq,1}(\sfM)
 \mid \bar{\partial}^{\Ft}_{\sfM}\vq=0,\;\;
 (\bar{\partial}^{\Ft}_{\sfM})^{\ast}\vq=0\}.
\end{equation}
 \par
Keeping the assumptions and the notation above, we have
 \begin{prop} \label{p5.3}
 Under the assumptions of Proposition~\ref{p5.2}: \begin{itemize}
 \item The space $\cN_{\Ft}^{\pq,1}(\sfM)$ is finite dimensional
  and equals its weak closure with respect to the $L^{2}$ norm; 
 \item For every pair of real numbers $r,r'$ with $-(1/2){\leq}r'{<}r,$ $r{\geq}0,$ there is
 a constant $C_{r,r',\pq}{>}0$ such that 
 \begin{equation}\label{e5.3a} \left\{\begin{aligned}
 \|\vq\|_{r'+(1/2)}\leq C_{r,r'\pq}\left(\|\bar{\partial}^{\Ft}_{\sfM}\vq\|^{2}_{r}
 {+} \|(\bar{\partial}^{\Ft}_{\sfM})^{\ast}\vq\|^{2}_{r}\right),
 \;\; \qquad \qquad\qquad\\
 \forall \vq\in 
 \Fq^{\pq,1}(\sfM){\cap}(\cN_{\Ft}^{\pq,1}(\sfM))^{\perp}.
 \end{aligned}\right.
\end{equation}\end{itemize} 
\noindent
[The orthogonal is taken with respect to the $L^{2}$-scalar product on~$\Fq^{\pq,1}(\sfM).$]
\begin{itemize}
 \item $H^{\pq,1}(\sfM,\Ft_{\sfM})\simeq\cN_{\Ft}^{\pq,1}(\sfM).$ 
\end{itemize}
 \end{prop}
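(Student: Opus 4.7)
The plan is to run the standard Kohn–Hörmander style Hodge-theoretic argument adapted to the $CR$ setting, taking Proposition~\ref{p5.2} as the starting subelliptic estimate.

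\emph{Finite dimensionality and weak closure.} For $\vq \in \cN_{\Ft}^{\pq,1}(\sfM)$ both $\bar{\partial}^{\Ft}_{\sfM}\vq$ and $(\bar{\partial}^{\Ft}_{\sfM})^{\ast}\vq$ vanish, so the second line of \eqref{e5.2} with $r=0$ reduces to $\|\vq\|_{1/2}^{2}\leq C\,\|\vq\|_{0}^{2}$. Thus the $L^{2}$-unit ball of $\cN_{\Ft}^{\pq,1}$ sits inside a bounded subset of $\sfW^{1/2}(\sfM,\Qt_{\sfM}^{\pq,1}\otimes\Ft_{\sfM})$; by Rellich compactness it is relatively compact in $L^{2}$, forcing $\dim \cN_{\Ft}^{\pq,1}<\infty$. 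A bootstrap using \eqref{e5.2} at arbitrary $r$ shows that elements of $\cN_{\Ft}^{\pq,1}$ are smooth. Since $\cN_{\Ft}^{\pq,1}$ is finite-dimensional inside $L^{2}$, its weak and strong $L^{2}$ closures agree, giving the asserted closure property.

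\emph{The improved estimate \eqref{e5.3a}.} I argue by contradiction. If \eqref{e5.3a} fails for some pair $(r,r')$ with $-1/2\leq r'<r$, choose a sequence $\vq_{n}\in\Fq^{\pq,1}(\sfM)\cap(\cN_{\Ft}^{\pq,1})^{\perp}$ with $\|\vq_{n}\|_{r'+1/2}=1$ and $\|\bar{\partial}^{\Ft}_{\sfM}\vq_{n}\|_{r}^{2}+\|(\bar{\partial}^{\Ft}_{\sfM})^{\ast}\vq_{n}\|_{r}^{2}\to 0$. Since $r'+1/2\geq 0$ the $L^{2}$-norms $\|\vq_{n}\|_{0}$ are bounded, so \eqref{e5.2} at level $r$ makes $\{\vq_{n}\}$ bounded in $\sfW^{r+1/2}$. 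As $r+1/2>r'+1/2$, Rellich yields a subsequence converging in $\sfW^{r'+1/2}$ to a limit $\vq$ of unit $\sfW^{r'+1/2}$-norm. Passing to the distributional limit gives $\bar{\partial}^{\Ft}_{\sfM}\vq=0=(\bar{\partial}^{\Ft}_{\sfM})^{\ast}\vq$, and the bootstrap of the first step places $\vq\in\cN_{\Ft}^{\pq,1}$. But orthogonality $\vq_{n}\perp\cN_{\Ft}^{\pq,1}$ is preserved under $L^{2}$-limits (using the continuous embedding $\sfW^{r'+1/2}\hookrightarrow L^{2}$), so $\vq=0$, contradicting $\|\vq\|_{r'+1/2}=1$.

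\emph{Isomorphism with cohomology.} Applying \eqref{e5.3a} with $r=0$, $r'=0$ gives that $(\bar{\partial}^{\Ft}_{\sfM},(\bar{\partial}^{\Ft}_{\sfM})^{\ast})$ is bounded below on $(\cN_{\Ft}^{\pq,1})^{\perp}$, hence has closed range. Together with the regularity furnished by Proposition~\ref{p5.2} this yields the $L^{2}$ Hodge-type orthogonal decomposition, which restricts to the smooth orthogonal decomposition of $\Fq^{\pq,1}(\sfM)$ into $\cN_{\Ft}^{\pq,1}$, $\bar{\partial}^{\Ft}_{\sfM}\Fq^{\pq,0}(\sfM)$, and a third term in the image of $(\bar{\partial}^{\Ft}_{\sfM})^{\ast}$. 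Define $\Phi:\cN_{\Ft}^{\pq,1}(\sfM)\to H^{\pq,1}(\sfM,\Ft_{\sfM})$ by $h\mapsto[h]$. Injectivity follows because if $h=\bar{\partial}^{\Ft}_{\sfM}u$ with $h\in\cN_{\Ft}^{\pq,1}$ then $\|h\|_{0}^{2}=(\bar{\partial}^{\Ft}_{\sfM}u,h)=(u,(\bar{\partial}^{\Ft}_{\sfM})^{\ast}h)=0$; surjectivity follows because any $\bar{\partial}^{\Ft}_{\sfM}$-closed $\vq$ has its $(\bar{\partial}^{\Ft}_{\sfM})^{\ast}$-component killed by $\bar{\partial}^{\Ft}_{\sfM}$-closedness, hence is of the form $h+\bar{\partial}^{\Ft}_{\sfM}u$.

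The main technical hurdle is the combined closed-range-plus-regularity step in the last paragraph: one must ensure the $L^{2}$ decomposition lifts back to the smooth category. This is where Proposition~\ref{p5.2} is used most delicately, in a further bootstrap to guarantee that the potential $u$ solving $\bar{\partial}^{\Ft}_{\sfM}u=\vq-H\vq$ is itself smooth.
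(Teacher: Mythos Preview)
Your proof is correct and tracks the paper's argument closely: Rellich compactness for finite dimensionality, a contradiction/compactness argument for the improved estimate, and a Hodge-type decomposition for the cohomology isomorphism. One cosmetic slip: when you invoke \eqref{e5.3a} ``with $r=0$, $r'=0$'' the hypothesis $r'<r$ is violated; take $r'=-\tfrac{1}{2}$, $r=0$ to land on the $L^{2}$ lower bound you actually want. The paper's surjectivity step is phrased a bit more concretely than yours---rather than citing an abstract Hodge decomposition, it splits a $\bar{\partial}^{\Ft}_{\sfM}$-closed $f$ as $f_{0}+f_{1}$ with $f_{1}\perp\cN_{\Ft}^{\pq,1}$, applies Riesz representation to the functional $\vq\mapsto(f_{1}\mid\vq)_{0}$ on $(\cN_{\Ft}^{\pq,1})^{\perp}$ equipped with the graph norm, and verifies by direct integration by parts that the representing element $\wq$ satisfies $\bar{\partial}^{\Ft}_{\sfM}(\bar{\partial}^{\Ft}_{\sfM})^{\ast}\wq=f_{1}$ and $\bar{\partial}^{\Ft}_{\sfM}\wq=0$---but this is the same mechanism you are describing.
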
 
\begin{proof}
 The first statement is a consequence of \eqref{e5.2}: this subelliptic estimate for $r{=}0$ 
 implies in particular that
 the $L^{2}$ and $\sfW^{1/2}$-Sobolev norms are equivalent on $\cN_{\Ft}^{\pq,1}(\sfM).$
 Then by Rellich's  theorem the unit $L^{2}$ ball in $\cN_{\Ft}^{\pq,1}(\sfM)$ is compact
 and this implies that $\cN_{\Ft}^{\pq,1}(\sfM)$ is finite dimensional
  and thus also
  equals its weak closure with respect to the $L^{2}$ norm. \par\smallskip
  The second statement can be proved by contradiction: assume that there are  
  real numbers $r,r',$ with $-(1/2){\leq}r'{<}r,$ $r{\geq}0,$ for which \eqref{e5.3a}
  is not valid for any positive constant $C_{r,r',\pq}.$\par 
   Then we can find a sequence
  $\{\vq_{\nuup}\}\,{\subset}\,\Fq^{\pq,1}(\sfM){\cap}(\cN_{\Ft}^{\pq,1}(\sfM))^{\perp}$ with
  \begin{equation*}
 \|\vq_{\nuup}\|_{r'+(1/2)}=1,\quad \|\bar{\partial}^{\Ft}_{\sfM}\vq\|^{2}_{r}
 {+} \|(\bar{\partial}^{\Ft}_{\sfM})^{\ast}\vq\|^{2}_{r}<2^{-\nuup}.
\end{equation*}
By \eqref{e5.2} this sequence is bounded in the $\sfW^{r+(1/2)}$-norm and 
therefore has
a subsequence $\{\vq_{\nuup'}\}$ which, weakly in $\sfW^{r+(1/2)}$ and hence
strongly in $\sfW^{r'+(1/2)},$ converges to a $\vq_{\infty}.$ 
In particluar $\|\vq_{\infty}\|_{r'+(1/2)}\,{=}\,1$ shows that  $\vq_{\infty}{\neq}0.$ 
This limit is a weak
solution of $\bar{\partial}^{\Ft}_{\sfM}\vq_{\infty}\,{=}\,0$ and
$(\bar{\partial}^{\Ft}_{\sfM})^{*}\vq_{\infty}\,{=}\,0$ and therefore, by the hypoellipticity
of $\bar{\partial}^{\Ft}_{\sfM}\,{\oplus}\,(\bar{\partial}^{\Ft}_{\sfM})^{*}$  
following from the subellipticity estimate \eqref{e5.2}, is a smooth section and thus
a nonzero
element of $\cN_{\Ft}^{\pq,1}(\sfM).$ 
\par
This completes the proof of the second statement. \par\smallskip
It is clear that the elements of $\cN_{\Ft}^{\pq,1}(\sfM)$ represent cohomology classes
in $H^{\pq,1}(\sfM,\Ft_{\sfM}).$ Indeed, if $\vq\,{\in}\,\cN_{\Ft}^{\pq,1}$ and 
$\vq\,{=}\,\bar{\partial}^{\Ft}_{\sfM}\uq$ for some $\uq\,{\in}\,\Fq^{\pq,0}(\sfM),$
then 
\begin{equation*}
(\bar{\partial}^{\Ft}_{\sfM})^{*}\bar{\partial}^{\Ft}_{\sfM}\uq=0 \;\Longrightarrow\;
 \|\vq\|_{0}^{2}=(\vq|\bar{\partial}^{\Ft}_{\sfM}\uq)_{0}=
 ((\bar{\partial}^{\Ft}_{\sfM})^{*}\vq | \uq)_{0}=0
 \end{equation*}
 shows that $\vq\,{=}\,0.$ On the other hand, if $f\,{\in}\,\Fq^{\pq,1}(\sfM)$ and 
 $\bar{\partial}^{\Ft}_{\sfM}f\,{=}\,0,$ then we can decompose $f$ into the sum
 $f\,{=}\,f_{0}{+}f_{1},$ with $f_{0}\,{\in}\,\cN_{\Ft}^{\pq,1}(\sfM)$ and 
 $f_{1}\,{\in}\,(\cN_{\Ft}^{\pq,1}(\sfM))^{\perp}.$ \par We get 
\begin{align*}
 |(f_{1}|\vq)_{0}|\leq\|f_{1}\|_{0}\|\vq\|_{0}\leq C_{-1/2,0,\pq}(\|\bar\partial^{\Ft}_{\sfM}\vq\|^{2}
 +\|(\bar\partial^{\Ft}_{\sfM})^{*}\vq\|^{2})^{1/2},\;\;
 \qquad\quad\\
 \forall \vq\in
 \Fq^{\pq,1}(\sfM){\cap}(\cN_{\Ft}^{\pq,1}(\sfM))^{\perp}.
\end{align*}
By Riesz' representation theorem 
there is a unique  
$\wq\,{\in}\,\Fq^{\pq,1}(\sfM){\cap}(\cN_{\Ft}^{\pq,1}(\sfM))^{\perp}$
($\wq$ is smooth because of the subellipticity estimate \eqref{e5.2})
such that  
\begin{equation*}
 (\bar{\partial}^{\Ft}_{\sfM}\wq|\bar{\partial}^{\Ft}_{\sfM}\vq)_{0}+
 \,((\bar{\partial}^{\Ft}_{\sfM})^{*}\wq|(\bar{\partial}^{\Ft}_{\sfM})^{*}\vq)_{0}
 =(f_{1}|\vq),\; \forall \vq\in
 \Fq^{\pq,1}(\sfM){\cap}(\cN_{\Ft}^{\pq,1}(\sfM))^{\perp}.
\end{equation*}
Since $f_{1}$ is $L^{2}$-orthogonal to $\cN_{\Ft}^{\pq,1}(\sfM),$ this equality holds true
for all $\vq$ in $\Fq^{\pq,1}(\sfM)$ and integration by parts yields 
\begin{equation*}
 (\bar{\partial}^{\Ft}_{\sfM})^{*}\bar{\partial}^{\Ft}_{\sfM}\wq=0,\qquad 
 \bar{\partial}^{\Ft}_{\sfM}(\bar{\partial}^{\Ft}_{\sfM})^{*}\wq=f_{1}.
\end{equation*}
This shows that the orthogonal projection of $\ker(\bar{\partial}^{\Ft}_{\sfM}:\Fq^{\pq,1}(\sfM)
\to\Fq^{\pq,2}(\sfM))$ onto $\cN_{\Ft}^{\pq,1}(\sfM)$ yields, by passing to the 
injective quotient, an isomorphism between $H^{\pq,1}(\sfM,\Ft)$ and
$\cN^{\pq,1}_{\Ft}(\sfM).$ \par
The proof is complete.
\end{proof}
Likewise, we have a similar statement for $CR$ 
sections of $\Ft$ on $\sfM.$ Set 
\begin{equation*}
 \cN^{\pq,0}_{\Ft}(\sfM)=\{\uq\in\Fq^{\pq,0}(\sfM)\mid \bar\partial^{\Ft}_{\sfM}\uq=0\}.
\end{equation*}
 \begin{prop} \label{p5.3a}
 Under the assumptions of Proposition~\ref{p5.2}: \begin{itemize}
 \item The space $\cN^{\pq,0}_{\Ft}(\sfM)$ is finite dimensional
  and equals its weak closure with respect to the $L^{2}$ norm; 
 \item For every pair of real numbers $r,r'$ with $-(1/2){\leq}r'{<}r,$ $r{\geq}0,$ there is
 a constant $C_{r,r',\pq}{>}0$ such that 
 \begin{equation}\label{e5.3a}
 \|\uq\|_{r'+(1/2)}\leq C_{r,r'\pq}\|\bar{\partial}^{\Ft}_{\sfM}\uq\|^{2}_{r} ,\;\;\;
 \forall \uq\in 
 \Fq^{\pq,0}(\sfM){\cap}(\cN_{\Ft}^{\pq,0}(\sfM))^{\perp}.
\end{equation}\end{itemize} 
 \end{prop}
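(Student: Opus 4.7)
The plan is to mimic the two-part argument for Proposition \ref{p5.3}, which simplifies in the present $(\pq,0)$ setting since the equation $\bar{\partial}^{\Ft}_{\sfM}\uq=0$ involves no adjoint and so only the first inequality of \eqref{e5.2} enters. For the first bullet, I would apply the subelliptic estimate of Proposition \ref{p5.2} in degree $(\pq,0)$: it makes the $\sfW^{1/2}$-Sobolev norm equivalent to the $L^{2}$-norm on $\cN^{\pq,0}_{\Ft}(\sfM)$. Rellich's compactness theorem then forces the closed unit $L^{2}$-ball in $\cN^{\pq,0}_{\Ft}(\sfM)$ to be compact, so $\cN^{\pq,0}_{\Ft}(\sfM)$ is finite dimensional; any finite dimensional subspace of a Hilbert space is automatically weakly closed and thus coincides with its weak $L^{2}$-closure.

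For the second bullet, I would argue by contradiction exactly as in Proposition \ref{p5.3}. Suppose the stated estimate fails for some pair $(r,r')$ with $-1/2\leq r'<r$ and $r\geq 0$. Then there is a sequence $\{\uq_{\nuup}\}\subset\Fq^{\pq,0}(\sfM)\cap(\cN^{\pq,0}_{\Ft}(\sfM))^{\perp}$ with $\|\uq_{\nuup}\|_{r'+(1/2)}=1$ and $\|\bar{\partial}^{\Ft}_{\sfM}\uq_{\nuup}\|_{r}^{2}<2^{-\nuup}$. The first inequality of \eqref{e5.2} keeps $\{\uq_{\nuup}\}$ bounded in $\sfW^{r+(1/2)}$; by passing to a subsequence, I may assume convergence weakly there and strongly in $\sfW^{r'+(1/2)}$ to some $\uq_{\infty}$ with $\|\uq_{\infty}\|_{r'+(1/2)}=1$. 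The limit weakly solves $\bar{\partial}^{\Ft}_{\sfM}\uq_{\infty}=0$, and hypoellipticity of $\bar{\partial}^{\Ft}_{\sfM}$ on $\Fq^{\pq,0}$ — a consequence of the subelliptic estimate via the standard Friedrichs-mollifier bootstrap — upgrades $\uq_{\infty}$ to a smooth section, so $\uq_{\infty}\in\cN^{\pq,0}_{\Ft}(\sfM)$. Since $(\cN^{\pq,0}_{\Ft}(\sfM))^{\perp}$ is weakly $L^{2}$-closed, $\uq_{\infty}$ also lies there, forcing $\uq_{\infty}=0$ and contradicting $\|\uq_{\infty}\|_{r'+(1/2)}=1$.

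The main technical step, just as in Proposition \ref{p5.3}, is the hypoellipticity that promotes the weak limit to a smooth section; here it is genuinely simpler than in the $(\pq,1)$ case, because only $\bar{\partial}^{\Ft}_{\sfM}$ needs to be regularized and the adjoint $(\bar{\partial}^{\Ft}_{\sfM})^{\ast}$ plays no role. Likewise, no cohomology-representation step is required: by definition $\cN^{\pq,0}_{\Ft}(\sfM)$ is precisely the kernel of $\bar{\partial}^{\Ft}_{\sfM}$ on $\Fq^{\pq,0}(\sfM)$, and so it already coincides with $H^{\pq,0}(\sfM,\Ft_{\sfM})$.
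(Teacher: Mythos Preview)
Your proposal is correct and matches the paper's approach: the paper does not write out a separate proof for Proposition~\ref{p5.3a} but simply prefaces it with ``Likewise, we have a similar statement,'' indicating that the argument of Proposition~\ref{p5.3} carries over verbatim with the obvious simplifications in degree $(\pq,0)$. Your write-up spells out precisely those simplifications (no adjoint term, no cohomology-representation step), so it is exactly what the paper intends.
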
 
\subsection{Deformation of tangential $CR$ complexes} \label{sub5.3}
Let us consider the situation described at the beginning of \S\ref{sec5.2}.
Here we make the additional assumption that $\sfM$ is a generic
compact smooth 
$CR$ submanifold of a complex variety $\sfX$ of 
dimension $n{+}k$ 
and that the complex $CR$ line bundle $\Ft_{\sfM}$ is the pullback
of a holomorphic line bundle $\Ft_{\sfX}$ on $\sfX,$ associated to a global
meromorphic function on $\sfX,$ whose pole divisor has support $D.$
We indicate by $\sfM_{0}$ the $CR$ structure, of type $(n,k),$ 
induced on $\sfM$ by the embedding $\sfM\,{\hookrightarrow}\,\sfX$ 
and consider a smooth one-parameter deformation $\{\sfM_{\ttt}\}$ 
on $\sfM$ of
this $CR$ structure $\sfM_{0},$ with the constraint:  
\begin{equation}
 \text{The $CR$ structures of $\sfM_{\ttt}$ agree to infinite order on $\sfM\cap{D}.$}
\end{equation}
 \par 
 We observed in \S\ref{sub3.2} that in this case $\Ft_{\sfM}$ can be also considered
 as a $CR$ line bundle on each $\sfM_{\ttt}.$ We need to  
 consider the different tangential
 Cauchy-Riemann complexes with coefficients in $\Ft_{sfM}$ as 
 complexes of partial differential operators 
 acting on the same spaces of vector valued functions. 
 \par 
 We showed in \S\ref{sub3.1a} that  we can represent the tangential $CR$ complex
 (on the trivial line bundle) by fixing a complement $\Et_{\sfM}{=}\Et^{0,1}_{\sfM}$ of 
 $\Ta^{1,0}_{\sfM}.$ This yields isomorphisms between 
 $\Et_{\sfM}^{\pq,\qq}{=}\Lambda^{\qq}(\Et_{\sfM}){\wedge_{\sfM}}\Lambda^{\pq}(\Ta^{1,0}_{\sfM})$
and $\Qt^{\pq,\qq}_{\sfM}$ for all integers $\pq,\qq$ with 
$0{\leq}\pq{\leq}n{+}k,$ $0{\leq}\qq{\leq}n.$ \par
We will be interested in deformations $\sfM_{\ttt}$ corresponding to \textit{small} values
of the parameter $\ttt.$ In particular, after shrinking, we can assume that 
$\Et_{\sfM}$ is transversal to $\Ta^{1,0}_{\sfM_{\ttt}}$ for all values of the parameter $\ttt.$ 
Then the tangential $CR$ complexes
\begin{equation*} \vspace{3pt}
\begin{CD}
 0 @>>>\Eq^{0,0}_{\sfM}@>{\sfd''_{\sfM_{\ttt}}}>> \Eq^{0,1}_{\sfM}@>{\sfd''_{\sfM_{\ttt}}}>> \cdots\to
 \Eq^{0,n-1}_{\sfM}
 @>{\sfd''_{\sfM_{\ttt}}}>> \Eq^{0,n}_{\sfM}@>>>0
\end{CD}
\end{equation*}
are defined as complexes of linear partial
 differential operators, smoothly depending on the parameter
$\ttt,$ which act on the germs of sections of the same fiber bundles. \par
To obtain an analogous presentation for positive values of $\pq,$ we note that 
every $\tauup$ in $\Ta^{1,0}_{\sfM_{\ttt}}$ uniquely decomposes into the sum
$\tauup'{+}\tauup''$ of a $\tauup'\,{\in}\,\Ta^{1,0}_{\sfM_{0}}$ and $\tauup''\,{\in}\,\Et_{\sfM}.$ 
The correspondence $\tauup\,{\leftrightarrow}\,\tauup'$ 
yields smooth isomorphisms $\Qt^{\pq,0}_{\sfM_{\ttt}}\,{\simeq}\,\Qt^{\pq,0}_{\sfM_{0}}$
of complex vector bundles, from which we obtain 
also for positive $\pq$ 
representations 
\begin{equation*} \vspace{3pt}
\begin{CD}
 0 @>>>\Eq^{\pq,0}_{\sfM}@>{\sfd''_{\sfM_{\ttt}}}>> \Eq^{\pq,1}_{\sfM}@>{\sfd''_{\sfM_{\ttt}}}>> \cdots\to
 \Eq^{0,n-1}_{\sfM}
 @>{\sfd''_{\sfM_{\ttt}}}>> \Eq^{\pq,n}_{\sfM}@>>>0
\end{CD}
\end{equation*}
of the tangential $CR$ complexes on $\sfM_{\ttt}$  by
linear partial differential operators  acting on the same complex vector bundles and smoothly
depending on the parameter $\ttt.$ \par
Finally, for the line bundle $\Ft_{\sfM},$ we use the trivialization given by the sections $\sigmaup_{i}$
of \S\ref{sub3.2} and \eqref{e3.3} to compute their $\bar{\partial}^{\,\Ft}_{\sfM_{\ttt}}.$ 
The form $\phiup_{i}^{(\ttt)}\,{\in}\,\Qq^{0,1}(U_{i})$ decomposes into the sum
of a $\psiup_{i}^{(\ttt)}\,{\in}\,\Eq^{0,1}(U_{i})$ and a form in $\Gamma^{\infty}(U_{i},\Ta^{1,0}_{\sfM_{\ttt}}).$ 
\par 
Then we can define for every $0{\leq}\pq{\leq}n{+}k$ and $0{\leq}\qq{<}n$ and 
all values of the parameter $\ttt$ a  
linear partial differential operators $\sfd''_{\sfM_{\ttt}}{}^{\!\!\!\!\Ft}$ 
mapping germs of smooth sections of 
$\Ft_{\sfM}\,{\otimes}\,\Et^{\pq,\qq}_{\sfM}$ to 
germs of smooth sections of 
$\Ft_{\sfM}\,{\otimes}\,\Et^{\pq,\qq+1}_{\sfM}$ in such a way that 
\begin{equation*}
\sfd''_{\sfM_{\ttt}}{}^{\!\!\!\!\Ft}(\alphaup\cdot\sigmaup_{i})=(\sfd''_{\sfM_{\ttt}}\alphaup+
(-1)^{\pq+\qq}\psiup_{i}^{(\ttt)}\wedge\alphaup)\cdot\sigmaup_{i},\;\; \forall i,
\forall \alphaup\in\Eq^{\pq,\qq}_{\sfM}(U_{i}).
\end{equation*}
In this way we represent the different tangential $CR$ complexes for forms with coefficients in
$\Ft_{\sfM}$ as complexes of differential operators, smoothly depending on the parameter
$\ttt,$ but acting on the same smooth complex vector bundles: 
\begin{equation} \label{e5.6}
\vspace{8pt}
\begin{aligned}
\begin{CD}
 0 \to \Eq^{\pq,0}(\sfM,\Ft_{\sfM}) @>{\sfd''_{\sfM_{\ttt}}{}^{\!\!\!\!\Ft}}>> \Eq^{\pq,1}(\sfM,\Ft_{\sfM})
 \to \cdots\end{CD}\qquad\qquad\qquad\\
 \begin{CD}
   \cdots  \to 
 \Eq^{\pq,n-1}(\sfM,\Ft_{\sfM}) @>{\sfd''_{\sfM_{\ttt}}{}^{\!\!\!\!\Ft}}>> \Eq^{\pq,n}(\sfM,\Ft_{\sfM})
 \to 0. 
\end{CD}
\end{aligned}
\end{equation}

\subsection{Estimates for deformations}
We keep the notation and assumptions of \S\ref{sub5.3}. Our goal is to show 
that the results of Propositions~\ref{p5.2},\ref{p5.3}
remain valid for small deformations. 
\begin{prop}\label{p5.4}
 Let $\sfM$ be a compact  generic $CR$ submanifold 
of type $(n,k)$ of a complex variety $\sfX.$ 
Let $\Ft_{\sfX}$ be the
holomorphic line bundle associated to a global meromorphic function on $\sfX$
and $D$ the support of its polar divisor. We assume that $\sfM$ is $2$-pseudoconcave.
Let $\{\sfM_{\ttt}\}$ be a deformation of the $CR$ structure induced on $M{=}M_{0}$
by the embedding into $\sfX,$ smoothly depending on the parameter $\ttt$
and with $CR$ structures agreeing to infinite order on~$D{\cap}\sfM.$
Then we can find $\epi_{0}{>}0$ such that for all $|\ttt|{<}\epi_{0}$ 
\begin{enumerate}
 \item for every 
  $0{\leq}\pq{\leq}n{+}k$ and every  
 real $r{\geq}0$ there
 is a positive constant $c_{r,\pq}{>}0$ such that 
\begin{equation} \label{e5.2a}\begin{cases}
 c_{r,\pq}\cdot\|\uq\|^{2}_{r+(1/2)}\leq \|\sfd''_{\sfM_{\ttt}}{}^{\!\!\!\!\Ft}\uq\|^{2}_{r}
+\|\uq\|_{0}^{2},
 \;\; \forall \uq\in 
 \Fq^{\pq,0}(\sfM)\\[4pt]
 c_{r,\pq}\cdot\|\vq\|^{2}_{r+(1/2)}\leq \|\sfd''_{\sfM_{\ttt}}{}^{\!\!\!\!\Ft}\vq\|^{2}_{r}
 + \|(\sfd''_{\sfM_{\ttt}}{}^{\!\!\!\!\Ft})^{\ast}\vq\|^{2}_{r}+\|\vq\|_{0}^{2},
 \;\; \forall \vq\in 
 \Fq^{\pq,1}(\sfM).
 \end{cases}
\end{equation}
\item for every 
  pair $r',r$ of real numbers with $-(1/2){\leq}r'{<}r,$ 
$r{\geq}0$ and  \par\noindent
$0{\leq}\pq{\leq}n{+}k,$ 
there
 is a positive constant $C_{r,r,'\pq}{>}0$ such that 
\begin{equation} \label{e5.2b} \begin{cases}
 \|\uq\|^{2}_{r'+(1/2)}\leq C_{r,r',\pq}
 \|\sfd''_{\sfM_{\ttt}}{}^{\!\!\!\!\Ft}\uq\|^{2}_{r}
\;\;\;
 \forall \uq\in 
 \Fq^{\pq,0}(\sfM)\cap (\cN^{\pq,0}_{\Ft}(\sfM_{\ttt}))^{\perp}\\[4pt]
\begin{aligned}
 \|\vq\|^{2}_{r'+(1/2)}\leq C_{r,r',\pq}\left(
 \|\sfd''_{\sfM_{\ttt}}{}^{\!\!\!\!\Ft}\vq\|^{2}_{r}
 + \|(\sfd''_{\sfM_{\ttt}}{}^{\!\!\!\!\Ft})^{\ast}\vq\|^{2}_{r}\right)
 \;\; \qquad\qquad\qquad\\
 \forall \vq\in 
 \Fq^{\pq,1}(\sfM)\cap (\cN^{\pq,1}_{\Ft}(\sfM_{\ttt}))^{\perp}.
 \end{aligned}
 \end{cases}
\end{equation}
\item 
$\dim_{\C}(H^{\pq,0}(\sfM_{\ttt},\Ft_{\sfM})){\leq}
\dim_{\C}(H^{\pq,0}(\sfM_{0},\Ft_{\sfM})){<}+\infty$
and \par \noindent
$\dim_{\C}(H^{\pq,1}(\sfM_{\ttt},\Ft_{\sfM})){\leq}
\dim_{\C}(H^{\pq,1}(\sfM_{0},\Ft_{\sfM})){<}+\infty$
for all $0{\leq}\pq{\leq}n{+}k.$ 
\end{enumerate}
\end{prop}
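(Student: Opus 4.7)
The plan is to prove the three parts in the order (1), (3), (2): first establish uniform subelliptic estimates on the fixed bundles, then deduce upper semicontinuity of the cohomology dimensions as a consequence, and finally use both to obtain the uniform refined estimate on orthogonal complements of the harmonic spaces.

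\medskip

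\textbf{Part (1): Uniform subelliptic estimates.}  The hypothesis of $2$-pseudoconcavity depends only on the Levi form of the CR structure and is an open condition (requiring at least $2$ positive and $2$ negative eigenvalues at every characteristic covector). Since the CR structures $\sfM_\ttt$ depend smoothly on $\ttt$, so do their Levi forms, and hence $\sfM_\ttt$ remains $2$-pseudoconcave for $|\ttt|<\epi_0$ with $\epi_0$ small enough. For each such $\ttt$ one may apply Proposition~\ref{p5.2} to $\sfM_\ttt$ and obtain \eqref{e5.2a} with a constant $c_{r,\pq}(\ttt)>0$. The proof in \cite{HN1} is local: it patches microlocal estimates through a finite partition of unity, and in each coordinate patch the estimate depends continuously on the coefficients and the principal symbol of the operator. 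By the presentation of \S\ref{sub5.3} these coefficients depend smoothly on $\ttt$, so $\ttt\mapsto c_{r,\pq}(\ttt)$ is continuous on the compact interval $[-\epi_0/2,\epi_0/2]$ and is bounded below by a positive constant, producing the uniform estimates.

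\medskip

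\textbf{Part (3): Upper semicontinuity of dimensions.}  Set $d_\qq = \dim_{\C} H^{\pq,\qq}(\sfM_0,\Ft_\sfM)<\infty$, which is finite by Propositions~\ref{p5.3} and \ref{p5.3a}. Suppose for contradiction that there is a sequence $\ttt_\nu\to 0$ with $\dim\cN^{\pq,\qq}_\Ft(\sfM_{\ttt_\nu})\geq d_\qq+1$. Choose $L^2$-orthonormal harmonic forms $h^\nu_1,\hdots,h^\nu_{d_\qq+1}\in\cN^{\pq,\qq}_\Ft(\sfM_{\ttt_\nu})$. Since both $\sfd''_{\sfM_{\ttt_\nu}}{}^{\!\!\!\!\Ft} h^\nu_i$ and $(\sfd''_{\sfM_{\ttt_\nu}}{}^{\!\!\!\!\Ft})^\ast h^\nu_i$ vanish, the uniform estimate from (1) bounds $\|h^\nu_i\|_{r+1/2}$ by $\|h^\nu_i\|_0=1$ for every $r\geq 0$, uniformly in $\nu$. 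By Rellich's theorem and a diagonal extraction we obtain smooth limits $h^\ast_1,\hdots,h^\ast_{d_\qq+1}$ that are still $L^2$-orthonormal and satisfy $\sfd''_{\sfM_0}{}^{\!\!\!\!\Ft} h^\ast_i=0$, $(\sfd''_{\sfM_0}{}^{\!\!\!\!\Ft})^\ast h^\ast_i=0$ (smoothness follows from the hypoellipticity implied by \eqref{e5.2}). This contradicts $\dim\cN^{\pq,\qq}_\Ft(\sfM_0)=d_\qq$.

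\medskip

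\textbf{Part (2): Uniform refined estimate.}  Mimic the contradiction argument in the proof of Proposition~\ref{p5.3}. If the uniform estimate fails, extract sequences $\ttt_\nu\to\ttt_\ast\in[-\epi_0,\epi_0]$ and $\vq_\nu\in\Fq^{\pq,\qq}(\sfM)\cap(\cN^{\pq,\qq}_\Ft(\sfM_{\ttt_\nu}))^\perp$ with $\|\vq_\nu\|_{r'+1/2}=1$ and the right-hand side of \eqref{e5.2b} tending to zero. By (1), $\{\vq_\nu\}$ is bounded in $\sfW^{r+1/2}$; Rellich gives strong $\sfW^{r'+1/2}$-convergence (along a subsequence) to a nonzero $\vq_\infty$ which, as a weak limit, is annihilated by $\sfd''_{\sfM_{\ttt_\ast}}{}^{\!\!\!\!\Ft}$ and $(\sfd''_{\sfM_{\ttt_\ast}}{}^{\!\!\!\!\Ft})^\ast$ and hence belongs to $\cN^{\pq,\qq}_\Ft(\sfM_{\ttt_\ast})$. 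The delicate step, which is also the principal obstacle in the whole proof, is transferring the orthogonality $\vq_\nu\perp\cN^{\pq,\qq}_\Ft(\sfM_{\ttt_\nu})$ to the limit: given $h\in\cN^{\pq,\qq}_\Ft(\sfM_{\ttt_\ast})$, one must approximate it in $L^2$ by elements $h_\nu\in\cN^{\pq,\qq}_\Ft(\sfM_{\ttt_\nu})$. This is the standard convergence of finite-dimensional kernels of a smooth family of (formally self-adjoint) Fredholm complexes, and it is available here precisely because the harmonic spaces have uniformly bounded dimension (part (3)) and satisfy uniform Sobolev bounds (part (1)): any $L^2$-orthonormal basis of $\cN^{\pq,\qq}_\Ft(\sfM_{\ttt_\nu})$ subconverges as in part (3) to an $L^2$-orthonormal family in $\cN^{\pq,\qq}_\Ft(\sfM_{\ttt_\ast})$, and combining with (3) one sees that every $h\in\cN^{\pq,\qq}_\Ft(\sfM_{\ttt_\ast})$ is an $L^2$-limit of such $h_\nu$. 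Passing to the limit in $(\vq_\nu\mid h_\nu)=0$ yields $\vq_\infty\in\cN^{\pq,\qq}_\Ft(\sfM_{\ttt_\ast})\cap(\cN^{\pq,\qq}_\Ft(\sfM_{\ttt_\ast}))^\perp=\{0\}$, a contradiction. The finiteness claim in (3) then upgrades to the dimension inequality via the same compactness reasoning applied at $\ttt_\ast=0$.
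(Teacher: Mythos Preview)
Your argument for (1) is the paper's. Your proof of (3) is correct and standard, though the paper reaches it by a different and slightly cleverer route: rather than extracting convergent orthonormal bases, the paper first proves an auxiliary estimate (their (5.9))
\[
\|\vq\|^{2}_{r'+1/2}\leq C\bigl(\|\sfd''_{\sfM_{\ttt}}{}^{\!\!\!\!\Ft}\vq\|_{r}^{2}+\|(\sfd''_{\sfM_{\ttt}}{}^{\!\!\!\!\Ft})^{*}\vq\|_{r}^{2}\bigr)
\quad\text{for }\vq\in(\cN^{\pq,1}_{\Ft}(\sfM_{0}))^{\perp},
\]
uniform for all small $|\ttt|$, where the orthogonal complement is taken with respect to the \emph{fixed} harmonic space at $\ttt=0$. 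The contradiction argument is then trivial: one may force $\ttt_{\nu}\to 0$, and the orthogonality passes to the limit because the subspace does not move. From (5.9) one gets $\cN^{\pq,1}_{\Ft}(\sfM_{\ttt})\cap(\cN^{\pq,1}_{\Ft}(\sfM_{0}))^{\perp}=\{0\}$, hence an injection $\cN^{\pq,1}_{\Ft}(\sfM_{\ttt})\hookrightarrow\cN^{\pq,1}_{\Ft}(\sfM_{0})$, which is (3).

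Your treatment of (2), on the other hand, has a genuine gap at precisely the point you flag as delicate. You claim that every $h\in\cN^{\pq,\qq}_{\Ft}(\sfM_{\ttt_{\ast}})$ is an $L^{2}$-limit of $h_{\nu}\in\cN^{\pq,\qq}_{\Ft}(\sfM_{\ttt_{\nu}})$; for that the limiting orthonormal family would have to \emph{span} $\cN^{\pq,\qq}_{\Ft}(\sfM_{\ttt_{\ast}})$, which requires $\liminf_{\nu}\dim\cN^{\pq,\qq}_{\Ft}(\sfM_{\ttt_{\nu}})\geq\dim\cN^{\pq,\qq}_{\Ft}(\sfM_{\ttt_{\ast}})$. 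But (3) and upper semicontinuity give only the opposite inequality. Kernels of a smooth family of subelliptic Laplacians need not vary continuously in general---the smallest nonzero eigenvalue can drift to zero and the kernel can jump \emph{up} at the limit---so this step cannot be completed as written. The paper sidesteps the whole issue by working on the fixed complement $(\cN^{\pq,1}_{\Ft}(\sfM_{0}))^{\perp}$ and then dispatches (2) with a one-line ``similar argument''; in the only case actually used later (Theorem~\ref{t5.5}, where $\cN^{\pq,1}_{\Ft}(\sfM_{0})=0$ and hence all $\cN^{\pq,1}_{\Ft}(\sfM_{\ttt})=0$ by (3)), the two orthogonal complements coincide with the whole space and (2) is literally (5.9), so no transfer of orthogonality is needed.
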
 
\begin{proof}
The proof of the subelliptic estimates in \cite{HN1} yield \eqref{e5.2a}, the constants being 
uniform for $|\ttt|{<}\epi_{0}$ because they depend upon the coefficients of the linear differential
operators $\sfd''_{\sfM_{\ttt}}{}^{\!\!\!\!\Ft}$ and their derivatives. \par \smallskip 
We prove (2) and (3) in the case of forms ($\qq\,{=}\,1$), as the case of sections ($\qq{=}0$)
is analogous and even simpler. \par 
We begin by proving that, for any pair $r,r'$ with $(-1/2){<}r'{<}r,$  $r{\geq}0$ and for
all $0{\leq}\pq{\leq}
 n{+}k,$ we can find $\epi>0$ and $C_{r,r',\pq}{>}0$ such that 
 \begin{equation}\label{e5.9}
\begin{aligned}
 \|\vq\|_{r'{+}(1/2)}^{2}\leq C_{r,r',\pq}\left(\|\sfd''_{\sfM_{\ttt}}{}^{\!\!\!\!\Ft} \vq\|^{2}_{r}+
 \|(\sfd''_{\sfM_{\ttt}}{}^{\!\!\!\!\Ft})^{*} \vq\|^{2}_{r}\right),\;\;\;\;\qquad\qquad
 \\
 \forall |\ttt|<\epi,\;\;\forall\vq\in\Eq^{\pq,1}(\sfM,\Ft_{\sfM})\cap(\cN^{\pq,1}_{\Ft}(\sfM_{0}))^{\perp}.
\end{aligned}
\end{equation}
We argue by contradiction: if this was false, we could find a sequence $\{\vq_{\nuup}\}$
in $\Eq^{\pq,1}(\sfM,\Ft_{\sfM})\cap(\cN^{\pq,1}_{\Ft}(\sfM_{0}))^{\perp}$ and a sequence 
$\{\ttt_{\nuup}{\to}0\}$ such that 
\begin{equation*}
 \|\vq_{\nuup}\|_{r'{+}(1/2)}^{2}=1,\;\;\; \|\sfd''_{\sfM_{\ttt_{\nuup}}}{}^{\!\!\!\!\Ft} \vq_{\nuup}\|^{2}_{r}+
 \|(\sfd''_{\sfM_{\ttt_{\nuup}}}{}^{\!\!\!\!\Ft})^{*} \vq_{\nuup}\|^{2}_{r}<2^{-\nuup}.
\end{equation*}
By (1), the sequence $\{\vq_{\nuup}\}$ is uniformly bounded in $\sfW^{r+{1/2}}$ and then,
by the compactness of the inclusion $\sfW^{r+(1/2)}\,{\hookrightarrow}\,\sfW^{r'+(1/2)},$ 
contains a subsequence which strongly converges in $\sfW^{r'+(1/2)}$ to a $\vq_{\infty},$
which on one hand should be
different from $0$ because $\|\vq_{\infty}\|_{r'+(1/2)}{=}1,$ 
on the other hand should be zero because it belongs to $\cN_{\Ft}^{\pq,1}(\sfM_{0})\,{\cap}\,
(\cN_{\Ft}^{\pq,1}(\sfM_{0}))^{\perp}.$ \par 
\smallskip
Estimate \eqref{e5.9} implies that $\cN_{\Ft}^{1,\pq}(\sfM_{\ttt})\,{\cap}\,
(\cN^{\pq,1}_{\Ft}(\sfM_{0}))^{\perp}\,{=}\,\{0\}$ for $|\ttt|{<}\epi.$ 
This yields (3), because the dimension of $H^{1,\pq}(\sfM_{\ttt},\Ft)$
equals that of $\cN_{\Ft}^{1,\pq}(\sfM_{\ttt}).$   \par\smallskip
We can prove (2) by an argument similar to the one used for \eqref{e5.9}.
\end{proof}
\subsection{Vanishing theorems}
We will use 
Proposition \ref{p5.4} to solve  
${\sfd''}^{\Ft}_{\!\!\!\sfM_{\ttt}}\uq_{\ttt}{=}f,$ for forms $f$ in $\Eq^{0,1}(\sfM,\Ft_{\sfM}),$
with $\uq_{\ttt}$ satisfying uniform estimates in the So\-bolev norms
$\sfW^{r},$ for sufficiently large $r,$ yielding by the 
immersion theorems control upon the derivatives of $\uq_{\ttt}.$ 
\begin{thm}\label{t5.5}
 Let us keep the assumptions of Proposition~\ref{p5.4}. Assume moreover that $\Ft_{\sfM}$ is 
 a positive line bundle on $\sfX.$ Then we can find $\epi_{0}>0$ such that 
\begin{equation}
 H^{\pq,\qq}(\sfM_{\ttt},\Ft_{\sfM}^{-\ell})=0,
  \;\;\forall \ell\geq\ell_{0},\; 0\leq\pq\leq{n}+k,\;\qq=0,1,\;\; |\ttt|<\epi_{0}.
\end{equation}
Moreover, for every $\fq\,{\in}\,\Eq^{0,1}(\sfM,\Ft_{\sfM}^{-\ell})$ satisfying ${\sfd''}^{\Ft}_{\!\!\!\sfM_{\ttt}}f\,{=}\,0$
there is a unique $\uq_{\ttt}$ in $\Gamma^{\infty}(\sfM,\Ft^{-\ell})$ such that, for $r'{<}r$ and $r{\geq}1/2,$   
\begin{equation}\label{e5.11}
 {\sfd''}^{\Ft}_{\!\!\!\sfM_{\ttt}}\uq_{\ttt}{=}\fq\;\;\;\text{and}\;\;\; \|\uq_{\ttt}\|_{r'}\leq C_{r',r}\|\fq\|_{r-(1/2)}.
\end{equation}
\end{thm}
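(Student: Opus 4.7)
The statement splits into two parts: the vanishing of the cohomology groups and the construction of the solution $\uq_\ttt$ with the prescribed Sobolev bound.

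\textbf{Vanishing.} I would first apply Theorem \ref{vanish} to the positive line bundle $\Ft_\sfX$ on $\sfX$, obtaining a threshold $\ell_0 > 0$ such that $H^{\pq,\qq}(\sfM_0, \Ft^{-\ell}_\sfM) = 0$ for $\qq \in \{0,1\}$, $0 \leq \pq \leq n{+}k$, and $\ell \geq \ell_0$. Since $\Ft^{-\ell}_\sfX$ is the line bundle associated with the $\ell$-th power of the meromorphic function underlying $\Ft_\sfX$, whose polar support is again contained in $D$, Proposition \ref{p3.2} (applied to these powers) equips $\Ft^{-\ell}_{\sfM_\ttt}$ with a CR line bundle structure over each deformed manifold, the resulting structures agreeing to infinite order on $D \cap \sfM$. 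Proposition \ref{p5.4}(3) applied to $\Ft^{-\ell}_\sfM$ then yields $\dim_\C H^{\pq,\qq}(\sfM_\ttt, \Ft^{-\ell}_\sfM) \leq \dim_\C H^{\pq,\qq}(\sfM_0, \Ft^{-\ell}_\sfM) = 0$ for $|\ttt| < \epi_0$.

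\textbf{Solution and estimate.} The previous step in degrees $(0,0)$ and $(0,1)$ gives $\cN^{0,0}_{\Ft^{-\ell}}(\sfM_\ttt) = \cN^{0,1}_{\Ft^{-\ell}}(\sfM_\ttt) = 0$. Consequently the Kohn Laplacian
\[
\Box_\ttt \,=\, {\sfd''}^{\Ft^{-\ell}}_{\!\!\!\sfM_\ttt}\bigl({\sfd''}^{\Ft^{-\ell}}_{\!\!\!\sfM_\ttt}\bigr)^{\!*} + \bigl({\sfd''}^{\Ft^{-\ell}}_{\!\!\!\sfM_\ttt}\bigr)^{\!*}{\sfd''}^{\Ft^{-\ell}}_{\!\!\!\sfM_\ttt}
\]
on $(0,1)$-forms is invertible; let $N_\ttt$ denote its Green operator. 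For $\fq \in \Eq^{0,1}(\sfM, \Ft^{-\ell})$ with ${\sfd''}^{\Ft^{-\ell}}_{\!\!\!\sfM_\ttt}\fq = 0$, I set $\uq_\ttt := \bigl({\sfd''}^{\Ft^{-\ell}}_{\!\!\!\sfM_\ttt}\bigr)^{\!*} N_\ttt \fq$. Since $N_\ttt$ commutes with ${\sfd''}^{\Ft^{-\ell}}_{\!\!\!\sfM_\ttt}$ and $\fq$ is $\bar\partial$-closed, one directly verifies ${\sfd''}^{\Ft^{-\ell}}_{\!\!\!\sfM_\ttt}\uq_\ttt = \Box_\ttt N_\ttt \fq = \fq$. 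Uniqueness follows from $\cN^{0,0}_{\Ft^{-\ell}}(\sfM_\ttt) = 0$, which identifies the kernel of ${\sfd''}^{\Ft^{-\ell}}_{\!\!\!\sfM_\ttt}$ in bidegree $(0,0)$ with the zero space. The Sobolev inequality $\|\uq_\ttt\|_{r'} \leq C_{r',r}\|\fq\|_{r-1/2}$ is then precisely estimate \eqref{e5.2b} of Proposition \ref{p5.4}(2) in bidegree $\pq = 0$, after the variable shift $(r, r') \mapsto (r - 1/2, r' - 1/2)$: the hypothesis $r \geq 1/2$ corresponds to $r \geq 0$ in Proposition \ref{p5.4}(2), and the condition $r' < r$ is preserved.

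\textbf{Main obstacle.} The delicate point is the uniformity of $\epi_0$ and of $C_{r',r}$ in $\ell \geq \ell_0$. Applied one line bundle at a time, Proposition \ref{p5.4} produces constants which a priori degenerate as $\ell \to \infty$. To control this I would inspect the proofs of the subelliptic estimates \eqref{e5.2a}--\eqref{e5.2b}: the principal part of ${\sfd''}^{\Ft^{-\ell}}_{\!\!\!\sfM_\ttt}$ is independent of $\ell$, while the zeroth-order correction coming from the connection scales linearly in $\ell$ and is dominated by a Bochner--Kodaira curvature contribution from $\Ft^{-\ell}$ that also scales linearly in $\ell$, with the correct sign thanks to the positivity of $\Ft$. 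This renders the constants in \eqref{e5.2a}--\eqref{e5.2b} uniform in $\ell \geq \ell_0$ for $|\ttt|$ sufficiently small, and the compactness-contradiction argument of Proposition \ref{p5.4}(3) then produces a single $\epi_0 > 0$ valid for all such $\ell$, together with constants $C_{r',r}$ uniform in $\ell$ and $\ttt$.
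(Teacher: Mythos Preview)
Your vanishing argument and your construction of $\uq_\ttt$ match the paper's: what you call $N_\ttt\fq$ is obtained there via Riesz representation on the completion of $\Eq^{0,1}(\sfM,\Ft^{-\ell}_\sfM)$ with respect to the norm $(\|\sfd''_{\sfM_\ttt}\vq\|_0^2 + \|(\sfd''_{\sfM_\ttt})^*\vq\|_0^2)^{1/2}$, the identity $\sfd''_{\sfM_\ttt}N_\ttt\fq=0$ is verified by an explicit integration-by-parts computation, and then $\uq_\ttt=(\sfd''_{\sfM_\ttt})^*N_\ttt\fq$. Where you diverge is in deriving the Sobolev estimate: you read \eqref{e5.11} directly off the first line of \eqref{e5.2b} via your variable shift (legitimate, since $\cN^{0,0}_{\Ft^{-\ell}}(\sfM_\ttt)=\{0\}$ makes the orthogonality hypothesis vacuous), while the paper instead first extracts a base bound $\|\uq_\ttt\|_0\leq C''_r\|\fq\|_{-r}$ for $0<r<\tfrac12$ from the Riesz construction and then bootstraps to higher Sobolev norms by commuting $\sfd''_{\sfM_\ttt}$ with pseudodifferential operators of small positive order. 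Your route is shorter; the paper's gives a slightly sharper initial bound (negative Sobolev index on the right) before climbing.

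Your ``main obstacle'' is unnecessary worry: the estimate \eqref{e5.11} is asserted and proved for a \emph{fixed} $\ell$, and the application in \S\ref{sec6} fixes a single $\ell$ at the outset, so no uniformity of $\epi_0$ or of $C_{r',r}$ in $\ell$ is required downstream. No Bochner--Kodaira argument appears in the paper; the first statement of the theorem is simply recorded as a consequence of Theorem~\ref{vanish} together with Proposition~\ref{p5.4}(3), applied one line bundle at a time.
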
 
\begin{proof} The first statement is a consequence of Theorem~\ref{vanish} and
Proposition~\ref{p5.4}. These results allow us to fix $\epi_{0}$ in such a way that,
by \eqref{e5.2b}, 
\begin{equation*}
 \|| \vq\||_{\ttt} = 
  \left(\| {\sfd''}^{\Ft}_{\!\!\!\sfM_{\ttt}}\vq\|^{2}_{0}+\|( {\sfd''}^{\Ft}_{\!\!\!\sfM_{\ttt}})^{*}\vq\|_{0}\right)^{1/2}
\end{equation*}
is a pre-Hilbertian norm on $\Et^{0,1}(\sfM_{\ttt},\Ft_{\sfM}^{-\ell}).$ 
Let  $\mathbf{S}_{\ttt}$ be its completion  with respect to this norm. Given 
$f\,{\in}\,\Et^{0,1}(\sfM,\Ft_{\sfM}^{-\ell}),$ we can consider $\vq{\to}(\vq|f)_{0}$  as
a linear continuous functional on $\mathbf{S}_{\ttt}$ and hence  by Riesz representation 
Theorem
there is a unique 
$\wq\,{\in}\,\mathbf{S}_{\ttt}$ such that 
\begin{equation*}
 (\vq|f)_{0}=( {\sfd''}^{\Ft}_{\!\!\!\sfM_{\ttt}}\vq| {\sfd''}^{\Ft}_{\!\!\!\sfM_{\ttt}}\wq)_{0}
 +(( {\sfd''}^{\Ft}_{\!\!\!\sfM_{\ttt}})^{*}\vq | ( {\sfd''}^{\Ft}_{\!\!\!\sfM_{\ttt}})^{*}\wq)_{0},\;\;
 \forall \vq\in \Et^{0,1}(\sfM,\Ft_{\sfM}^{-\ell}).
\end{equation*}
By the subelliptic estimate \eqref{e5.2a}, in fact 
$\wq\,{\in}\,\Et^{0,1}(\sfM,\Ft^{-\ell}_{\sfM}),$
i.e. $\wq$ is smooth, because we assumed that 
$f$ is smooth. If $f$ satisfies the integrability condition 
${\sfd''}^{\Ft}_{\!\!\!\sfM_{\ttt}}f=0,$
then integration by parts yields
\begin{align*}
 0&=({\sfd''}^{\Ft}_{\!\!\!\sfM_{\ttt}}\wq|{\sfd''}^{\Ft}_{\!\!\!\sfM_{\ttt}}f)_{0}=
 (({\sfd''}^{\Ft}_{\!\!\!\sfM_{\ttt}})^{*}{\sfd''}^{\Ft}_{\!\!\!\sfM_{\ttt}}\wq|f)\\
 &=
 ( {\sfd''}^{\Ft}_{\!\!\!\sfM_{\ttt}}({\sfd''}^{\Ft}_{\!\!\!\sfM_{\ttt}})^{*}{\sfd''}^{\Ft}_{\!\!\!\sfM_{\ttt}}\wq| 
 {\sfd''}^{\Ft}_{\!\!\!\sfM_{\ttt}}\wq)_{0}
 +(( {\sfd''}^{\Ft}_{\!\!\!\sfM_{\ttt}})^{*}({\sfd''}^{\Ft}_{\!\!\!\sfM_{\ttt}})^{*}{\sfd''}^{\Ft}_{\!\!\!\sfM_{\ttt}}\wq | 
 ( {\sfd''}^{\Ft}_{\!\!\!\sfM_{\ttt}})^{*}\wq)_{0}\\
 &= (({\sfd''}^{\Ft}_{\!\!\!\sfM_{\ttt}})^{*}{\sfd''}^{\Ft}_{\!\!\!\sfM_{\ttt}}\wq|
 ({\sfd''}^{\Ft}_{\!\!\!\sfM_{\ttt}})^{*}{\sfd''}^{\Ft}_{\!\!\!\sfM_{\ttt}}\wq)_{0} \;\; \Longrightarrow\;\; 
 ({\sfd''}^{\Ft}_{\!\!\!\sfM_{\ttt}})^{*}{\sfd''}^{\Ft}_{\!\!\!\sfM_{\ttt}}\wq=0
 \\
 &\Longrightarrow 0=(({\sfd''}^{\Ft}_{\!\!\!\sfM_{\ttt}})^{*}{\sfd''}^{\Ft}_{\!\!\!\sfM_{\ttt}}\wq|\wq)_{0}=
 \|{\sfd''}^{\Ft}_{\!\!\!\sfM_{\ttt}}\wq\|_{0}^{2}\;\;\;\Longrightarrow {\sfd''}^{\Ft}_{\!\!\!\sfM_{\ttt}}\wq=0.
\end{align*}
 By \eqref{e5.2b}, for all
$0 < r < 1/2$
\begin{align*}
 (\uq_{\ttt}|\uq_{\ttt})_{0} & = (({\sfd''}^{\Ft}_{\!\!\!\sfM_{\ttt}})^{*}\wq,({\sfd''}^{\Ft}_{\!\!\!\sfM_{\ttt}})^{*}\wq)_{0} = (\wq, {\sfd''}^{\Ft}_{\!\!\!\sfM_{\ttt}}({\sfd''}^{\Ft}_{\!\!\!\sfM_{\ttt}})^{*}\wq)
=  (\wq,f)_{0}\\
 &\leq  \|f\|_{-r}\|\wq\|_{r} \leq {C}''_{r}\|f\|_{-r}\|\uq_{\ttt}\|_{r-\frac{1}{2}} \leq {C}''_{r}\|f\|_{-r}\|\uq_{\ttt}\|_{0} 
\end{align*}
Hence 
\begin{equation*}
  \uq_{\ttt}= ({\sfd''}^{\Ft}_{\!\!\!\sfM_{\ttt}})^{*}\wq\;\;\;\text{solves}\;\;\;
  {\sfd''}^{\Ft}_{\!\!\!\sfM_{\ttt}}\uq_{\ttt}=f\;\;\;\text{and}\;\;\; \|\uq_{\ttt}\|_{0}\leq{C}''_{r}\|f\|_{-r},\;\;
  \forall 0 < r < 1/2,
\end{equation*}
with constants $C_{r}''$ independent of $|\ttt|{<}\epi_{0}.$
Since $\cO(\sfM_{\ttt},\Ft_{\sfM})=\{0\},$ the solution is unique and therefore we obtain the
estimates in \eqref{e5.11} by recurrence, using, for any pseudodifferential operator $P$
on $\sfM,$  the identity
\begin{equation*}
 {\sfd''}^{\Ft}_{\!\!\!\sfM_{\ttt}}P(u_{\ttt})=[{\sfd''}^{\Ft}_{\!\!\!\sfM_{\ttt}},P](u_{\ttt})+P(f).
\end{equation*}
 If $P$ has order $r',$ with $0{<}r'{<}1/2,$ then 
\begin{equation*}
 \| P(u_{\ttt})\|_{0}\leq C''_{r'}\left(\|[{\sfd''}^{\Ft}_{\!\!\!\sfM_{\ttt}},P](u_{\ttt})\|_{-r'}+\|P(f)\|_{-r'}\right)
 \leq \text{const}_{P} C''_{r'} \left(\|u_{\ttt}\|_{0}+\|f\|_{0}\right),
\end{equation*}
with a constant $\text{const}_{P}$ which only depends on $P.$ This yields 
\begin{equation*}
 \|\uq_{\ttt}\|_{r'} \leq C_{r',(1/2)}\|f\|_{0}.
\end{equation*}
with constants independent of $\ttt.$ 
Repeating this argument we obtain the estimate in the statement.
\end{proof}

\section{Proof of Theorem \ref{main}}\label{sec6}
\subsection{The general case}\label{sub6.1}
We keep the notation of \S\ref{sub3.2}. The $(f''_{i})$ define a global section $\sigmaup$
of $\Ft_{\sfM},$ which is $CR$ for the structure on $\sfM_{0}$ and is $\neq{0}$ on $\sfM{\backslash}D.$
\par
Fix an integer $\ell$ such that $\Ft_{\sfX}^{\ell}$ is very ample and
$H^{0,1}(\sfM_{\ttt},\Ft^{-h}_{\sfM}){=}0$ for $h{\geq}\ell$ and $|\ttt|{<}\epi_{0}$, which is possible by Theorem \ref{t5.5}.

For every $\ttt$ we
set 
\begin{equation*}
 \gq_{\ttt}= 
\begin{cases}
  \sfd''_{\sfM_{\ttt}}{}^{\!\!\!\Ft^{-\ell}}\sigmaup^{-\ell}, &\text{on $\sfM{\backslash}D,$}\\
 0, &\text{on $D.$}
\end{cases}
\end{equation*}
Since the $CR$ structure 
of $\sfM_{\ttt}$ agrees with that of $\sfM_{0}$ to infinite order on
$D,$ the $\gq_{\ttt}$'s are smooth sections in $\Eq^{0,1}(\sfM,\Ft^{-\ell})$ that vanish to infinite
order on $D$ and satisfy the integrability condition $ \sfd''_{\sfM_{\ttt}}{}^{\!\!\!\Ft^{-\ell}}\gq_{\ttt}\,{=}\,0.$
By Theorem~\ref{t5.5}, for $|\ttt|{<}\epi_{0},$ 
 we can find $\vq_{\ttt}\,{\in}\,\Gamma^{\infty}(\sfM,\Ft_{\sfM}^{-\ell})$ to solve 
\begin{equation*}
  \sfd''_{\sfM_{\ttt}}{}^{\!\!\!\Ft^{-\ell}}\vq_{i,\ttt}=\gq_{\ttt},\;\;
  \text{and we have}\;\; \|\vq_{\tauup}\|_{r'}\leq C_{r',r}\|\gq_{\ttt}\|_{r-(1/2)}
\end{equation*}
for $0{\leq}r'{<}r,$ with a constant $C_{r,r'}$ independent of $\ttt.$ 
Then 
\begin{equation*}
\sigmaup^{-\ell}-\vq_{\ttt}\in\cO_{\sfM_{\ttt}}(\sfM_{\ttt}\backslash D,\Ft^{-\ell}_{\sfM}).
\end{equation*}
By Sobolev's embedding theorems, the $\sup$-norms of the
first derivatives of the
$\vq_{i,\ttt}$ are bounded by their $\sfW^{r'}$-norms for $r'{>}(2n{+}k{+}2)/2.$ 
As $\|\gq_{i,\ttt}\|_{r-(1/2)}{\to}0$ for $\ttt{\to}0,$ we obtain that, 
for
$|\ttt|{\ll}{1},$ the sections 
\begin{equation*}
\etaup_{\ttt} = (\sigmaup^{-\ell}-\vq_{\ttt})^{-1}\in\cO_{\sfM_{\ttt}}(\sfM_{\ttt}\backslash D,\Ft^{\ell}_{\sfM}).
\end{equation*}
extend to smooth  sections of $\cO_{\sfM_{\ttt}}(\sfM_{\ttt},\Ft^{\ell}_{\sfM})$, defining the divisor $D$.

Since $\Ft_{\sfX}^{\ell}$ is very ample, there are
sections $\sigmaup_{0},\sigmaup_{1},\hdots,\sigmaup_{m}{\in}\cO_{\sfM_{0}}(\sfX,\Ft^{\ell}_{\sfX})$
providing 
a holomorphic embedding of $\sfX$ into $\CP^{m}.$ \par
Their restrictions $(\sigmaup_{0.0},\sigmaup_{1,0},\hdots,\sigmaup_{m,0})$
to $\sfM$ provide 
a $CR$ embedding of $\sfM_{0}$ into $\CP^{m}.$ For every $\ttt$ and $1{\leq}i{\leq}m$
set 
\begin{equation*}
 \fq_{i,\ttt}= 
\begin{cases}
  \sfd''_{\sfM_{\ttt}}{}^{\!\!\!\Ft^{-\ell}}(\etaup_{\ttt}^{-2}\sigmaup_{i}), &\text{on $\sfM{\backslash}D,$}\\
 0, &\text{on $D.$}
\end{cases}
\end{equation*}
Then, arguing as above and invoking Theorem~\ref{t5.5}
 for $|\ttt|{<}\epi_{0},$ 
 we can find $\uq_{i,\ttt}\,{\in}\,\Gamma^{\infty}(\sfM,\Ft_{\sfM}^{-\ell})$ to solve 
\begin{equation*}
  \sfd''_{\sfM_{\ttt}}{}^{\!\!\!\Ft^{-\ell}}\uq_{i,\ttt}=f_{i,\ttt},\;\;
  \text{and we have}\;\; \|\uq_{i,\tauup}\|_{r'}\leq C_{r',r}\|f_{i,\ttt}\|_{r-(1/2)}
\end{equation*}
for $0{\leq}r'{<}r,$ with a constant $C_{r,r'}$ independent of $\ttt.$ 
Then 
\begin{equation*}
 \sigmaup_{i,\ttt}= \sigmaup_{i}-\uq_{i,\ttt}\etaup_{\ttt}^{2}\in\cO_{\sfM_{\ttt}}(\sfM_{\ttt},\Ft^{\ell}_{\sfM}).
\end{equation*}
As $\|f_{i,\ttt}\|_{r-(1/2)}{\to}0$ for $\ttt{\to}0,$ we obtain that, 
for
$|\ttt|{\ll}{1},$ the sections \\
$(\sigmaup_{0,\ttt},\sigmaup_{1,\ttt},\hdots,\sigmaup_{m,\ttt}),$
being a small $\Cc^{1}$-perturbation of
$(\sigmaup_{0,0},\sigmaup_{1,0},\hdots,\sigmaup_{m,0}),$
still provide a $CR$ immersion of $\sfM_{\ttt}$ into $\CP^{m}.$  \par
The last part of the statement follows from \cite{HNmero}, where it is shown that
the maximum degree of transcendence of the field of $CR$ meromorphic functions on $\sfM_{\ttt}$
is $n{+}k$: this implies that  all pseudoconcave 
$CR$ manifold of type $(n,k)$ having a projective embedding, can
be embedded into a projective complex variety of 
dimension~$n{+}k.$ 

\subsection{Generic $CR$ submanifolds of the projective space}
The line bundles on the projective space $\CP^{n+k}$ are parametrized 
(modulo equivalence) by the integers. They are all holomorphically equivalent
to integral powers 
of the line bundle   
$\Ot_{\CP^{n+k}},$ which can be described,
in the covering $\{U_{i}\,{=}\,\{z_{i}{\neq}0\}\}$ 
by the
transition functions  $\{g_{i,j}\,{=}z_{j}^{-1}z_{i}\}.$
The support of the pole divisor of a meromorphic function $\textsf{f}$ on $\CP^{n+k}$
is the set of common zeros of a homogeneous polynomial 
\begin{equation}\label{e6.1}
 D=\{\wp(z_{0},z_{1},\hdots,z_{n+k})=0\}.
\end{equation}
If $\wp(z)$ has degree $d,$ this is the zero locus of a section of 
$\Ot_{\CP^{n+k}}^{\,d}.$ 
 Replacing
 $\wp$ by one of its powers if necessary,
we can as well assume that~$d{>}n{+}k.$ 
Then we have
\begin{equation*}
 H^{\qq}(\CP^{n+k},\Ot_{\CP^{n+k}}^{d})=\begin{cases}
\{ \psiup\in\C[z_{0},z_{1},\hdots,z_{n+k}]\mid \psiup(\lambdaup{z})=\lambdaup^{d}\psiup(z)\}, 
\;\;
\text{if $\qq{=}0,$}\\
 \{0\}\;\;
 \text{for $\qq{>}0.$}\end{cases}
\end{equation*}
\par \smallskip
The fact that the pullback 
$\Ot^{\,d}_{\sfM}$ of $\Ot^{\,d}_{\CP^{n+k}}$ is a $CR$ line bundle on $\sfM_{\ttt}$ 
implies that
also all pullbacks $\Ot^{\,\pq}_{\sfM}$ of $\Ot^{\,\pq}_{\CP^{n+k}}$ to $M,$ for any $\pq\,{\in}\,Z,$
are $CR$ line bundles on $\sfM_{\ttt}.$  Indeed the fact
that $\Ot^{\,d}_{\sfM}$ is a $CR$ line bundle
on $\sfM_{\ttt}$ means that we can find an open covering of $\sfM,$ that we can take of
the form $\{U_{i,\muup}\}$ 
with $U_{i,\muup}\,{\subseteq}\,U_{i}\,{=}\,\{z_{i}{\neq}0\}$ and $\sfd''_{\sfM_{\ttt}}$-closed
forms $\phiup_{i,\muup}\,{\in}\,\Eq^{0,1}(U_{i,\muup})$ such that 
\begin{equation*}
 \left(\frac{z_{j}}{z_{i}}\right)^{d}\sfd''_{\sfM_{\ttt}} \left(\frac{z_{i}}{z_{j}}\right)^{d}
 = \phiup_{i,\muup}-\phiup_{j,\nuup}\;\;\;\text{on $U_{i,\muup}\cap{U}_{j,\nuup}.$}
\end{equation*}
This is equivalent to 
\begin{equation*}
 \left(\frac{z_{j}}{z_{i}}\right)\sfd''_{\sfM_{\ttt}} \left(\frac{z_{i}}{z_{j}}\right)
 = \frac{1}{d}\phiup_{i,\muup}-\frac{1}{d}\phiup_{j,\nuup}\;\;\;\text{on $U_{i,\muup}\cap{U}_{j,\nuup},$} 
\end{equation*}
showing that $\Ot_{\sfM},$ and therefore all $\Ot_{\sfM}^{\pq}$ with $\pq\,{\in}\,\Z$ 
are $CR$ line bundles on~$\sfM_{\ttt}.$ 
\par \smallskip
By repeating the proof in \S\ref{sub6.1} by using the sections $\sigmaup_{i}\,{=}\,z_{i}$ of
$\Ot_{\sfM},$ we obtain 
\begin{thm}  \label{projective}
 Let $\sfM$ be a smooth  
 compact generic $CR$ submanifold of type $(n,k)$ of $\CP^{n+k}$ and $D$ the zero locus
 in $\CP^{n+k}$ of a homogeneous polynomial of degree $d$ in $\C[z_{0},z_{1},\hdots,z_{n+k}].$
 Let $\{\sfM_{\ttt}\}$ be a family of $CR$ structures of type $(n,k)$ on $\sfM,$ smoothly 
 depending on a real parameter $\ttt.$ If 
\begin{enumerate}
 \item $\sfM_{0}$ is induced by the embedding $\sfM\,{\hookrightarrow}\,\CP^{n+k}$;
 \item $\sfM_{0}$ is $2$-pseudoconcave;
 \item the $CR$ structures of all $\sfM_{\ttt}$ agree to infinite order on $\sfM\,{\cap}\,D.$
\end{enumerate}
Then we can find $\epi_{0}>0$ such that, for every $|\ttt|<\epi_{0},$ $\sfM_{\ttt}$ admits a
generic $CR$ embedding into $\CP^{n+k}.$ \qed
\end{thm}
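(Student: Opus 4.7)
The plan is to follow the construction of \S\ref{sub6.1} verbatim, with $\sfX{=}\CP^{n+k}$ and $\Ft_\sfX{=}\Ot_{\CP^{n+k}}$, but using the low-degree tautological sections $z_0,\hdots,z_{n+k}$ of $\Ot_\sfM$ themselves in place of sections of a large power $\Ft_\sfX^\ell$. This is possible precisely because of the computation preceding the theorem statement: \emph{every} integer power $\Ot_\sfM^{p}$, and in particular $\Ot_\sfM^{1}$, carries a $CR$ line bundle structure on each $\sfM_\ttt$. Without that remark one would be forced to embed by all sections of $\Ot^\ell$ for large $\ell$ and land in a Veronese image inside some $\CP^m$ with $m\gg n{+}k$.

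Concretely, I would fix a positive integer $\ell$ divisible by $d$ and large enough that $2\ell{-}1\ge\ell_0$, where $\ell_0$ is the constant supplied by Theorem~\ref{t5.5} applied to the positive line bundle $\Ot_{\CP^{n+k}}$; in particular $H^{0,1}(\sfM_\ttt,\Ot_\sfM^{-h})=0$ for every $h\ge\ell_0$ and $|\ttt|<\epi_0$, together with the uniform smoothing estimates of \eqref{e5.11}. Taking $\sigmaup=\wp^{\ell/d}|_{\sfM}\in\cO_{\sfM_0}(\sfM,\Ot_\sfM^\ell)$, the recipe of \S\ref{sub6.1} produces a global $CR$ section $\etaup_\ttt\in\cO_{\sfM_\ttt}(\sfM_\ttt,\Ot_\sfM^\ell)$ cutting out $D$: solve $\bar\partial^{\Ot^{-\ell}}_{\sfM_\ttt}\vq_\ttt=\bar\partial^{\Ot^{-\ell}}_{\sfM_\ttt}\sigmaup^{-1}$ and invert $\sigmaup^{-1}-\vq_\ttt$.

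For each $i=0,\hdots,n{+}k$ I then form
\begin{equation*}
\fq_{i,\ttt}=\begin{cases}\bar\partial^{\Ot^{1-2\ell}}_{\sfM_\ttt}(\etaup_\ttt^{-2}z_i), & \text{on }\sfM\setminus D,\\ 0, & \text{on } D.\end{cases}
\end{equation*}
Since $\etaup_\ttt$ is $CR$ on $\sfM_\ttt$, on $\sfM\setminus D$ this expression reduces to $\etaup_\ttt^{-2}\,\bar\partial^{\Ot}_{\sfM_\ttt}z_i$; because $z_i$ is $CR$ for $\sfM_0$ and the CR structures of the family agree to infinite order on $D$, the factor $\bar\partial^{\Ot}_{\sfM_\ttt}z_i$ vanishes to infinite order on $D\cap\sfM$, which absorbs the pole of order $2\ell$ of $\etaup_\ttt^{-2}$ and makes $\fq_{i,\ttt}$ a smooth, $\bar\partial^{\Ot^{1-2\ell}}_{\sfM_\ttt}$-closed section in $\Eq^{0,1}(\sfM,\Ot_\sfM^{1-2\ell})$ with $\|\fq_{i,\ttt}\|_{r-1/2}\to 0$ as $\ttt\to 0$ for every $r$. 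Theorem~\ref{t5.5} then supplies $\uq_{i,\ttt}\in\Gamma^\infty(\sfM,\Ot_\sfM^{1-2\ell})$ solving $\bar\partial^{\Ot^{1-2\ell}}_{\sfM_\ttt}\uq_{i,\ttt}=\fq_{i,\ttt}$ with uniform bounds $\|\uq_{i,\ttt}\|_{r'}\le C_{r',r}\|\fq_{i,\ttt}\|_{r-1/2}$. Setting $\sigmaup_{i,\ttt}=z_i-\uq_{i,\ttt}\etaup_\ttt^{\,2}\in\Gamma^\infty(\sfM,\Ot_\sfM)$, a short computation using that $\etaup_\ttt$ is $CR$ gives $\bar\partial^{\Ot}_{\sfM_\ttt}(\uq_{i,\ttt}\etaup_\ttt^{\,2})=\etaup_\ttt^{\,2}\fq_{i,\ttt}=\bar\partial^{\Ot}_{\sfM_\ttt}z_i$, so each $\sigmaup_{i,\ttt}$ is a genuine $CR$ section of $\Ot_{\sfM_\ttt}$.

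Choosing $r'>(2n{+}k{+}2)/2$ and applying Sobolev embedding yields $\|\sigmaup_{i,\ttt}-z_i\|_{\Cc^1}\to 0$. Since $(z_0,\hdots,z_{n+k})$ realises the original generic $CR$ embedding $\sfM\hookrightarrow\CP^{n+k}$, the perturbed tuple $(\sigmaup_{0,\ttt},\hdots,\sigmaup_{n+k,\ttt})$ still defines a generic $CR$ embedding $\sfM_\ttt\hookrightarrow\CP^{n+k}$ for all sufficiently small $|\ttt|$: being a generic immersion is an open condition in $\Cc^1$, and global injectivity is preserved under small $\Cc^0$ perturbations of a compact embedding. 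The one delicate point in the whole argument is the smoothness of $\fq_{i,\ttt}$ across $D$, which requires both the \emph{quadratic} power $\etaup_\ttt^{-2}$ (so that after applying $\bar\partial_{\sfM_\ttt}$ only a pole multiplied by the infinitely vanishing $\bar\partial_{\sfM_\ttt}z_i$ remains) and the infinite-order agreement hypothesis~(3) on $D$; once this is verified, everything else reduces to bookkeeping on top of the estimates established in \S\ref{sec5} in the presentation of \S\ref{sub5.3}.
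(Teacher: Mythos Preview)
Your proposal is correct and is precisely the argument the paper has in mind: its one-line proof simply says to repeat \S\ref{sub6.1} with $\sigmaup_i=z_i$, and you have carried this out in detail, correctly identifying that the corrected sections live in $\Ot_{\sfM}$ itself and that the $\bar\partial$-equation must therefore be posed in $\Ot_{\sfM}^{\,1-2\ell}$. The only trivial adjustment is that you should also require $\ell\ge\ell_0$ (not merely $2\ell{-}1\ge\ell_0$) so that the first step producing $\etaup_\ttt$ goes through, which is of course automatic once $\ell$ is chosen large.
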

\subsection{Generic $CR$ submanifolds of Fano varieties}
We recall that a complex variety $\sfX$ is \emph{Fano} if its anticanonical bundle
$\Kt_{\sfX}^{-1}$ is ample.  If we add to the assumptions of Theorem~\ref{main}
the requirement that $\sfX$ is Fano, then we can define a projective embedding
of $\sfM_{0}$ by using $CR$ sections $\sigmaup_{0},\sigmaup_{1},\hdots,\sigmaup_{m}$
of a power $\Kt_{\sfM}^{-\ell}$ of the pullback on $\sfM$ of the anticanonical bundle. Indeed,
by \cite{Kl}, the tensor product $\Kt_{\sfX}^{-\ell}\,{\otimes}\,\Ft_{\sfX}^{-k}$ is the dual
of a positive bundle and we can apply the arguments in \S\ref{sub6.1} to obtain 
sections $\sigmaup_{i,\ttt}$ in
$\cO_{\sfM_{\ttt}}(\sfM_{\ttt},\Kt_{\sfM}^{-\ell}),$ which, for $|\ttt|{\ll}{1}$
yield projective $CR$ 
embeddings $\sigmaup_{\ttt}=(\sigmaup_{0,\ttt},\sigmaup_{1,\ttt},\hdots,\sigmaup_{m,\ttt})$
of $\sfM_{\ttt}$ into $\CP^{m}.$ The images $\sfM'_{\ttt}\,{=}\,\sigmaup_{\ttt}(\sfM_{\ttt})$
are generic submanifolds of complex $(n{+}k)$-dimensional 
subvarieties $\sfX_{\ttt}$ of $\CP^{m}$ which have, by construction, an ample anticanonical
bundle. Indeed we know from \cite[Thm.5.2]{HNmero} that, since the $\sfM_{\ttt}$ are
pseudoconcave, 
global $CR$ meromorphic functions on
$\sfM_{\ttt}$ are restrictions of global meromorphic functions on $\sfX_{\ttt}.$ 
Thus we have 
\begin{thm}  \label{Fano}
Add to the assumptions of Theorem~\ref{main} the fact that $\sfX$ is Fano.
Then there is $\epi_{0}{>}0$ such that for all $|\ttt|\,{<}\,\epi_{0}$ the abstract
$CR$ manifold 
$\sfM_{\ttt}$ has a generic $CR$ embedding
into a complex Fano variety $\sfX_{\ttt}.$ 
\end{thm}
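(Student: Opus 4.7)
The plan is to run the construction of \S\ref{sub6.1} with the pullback of the anticanonical bundle $\Kt_{\sfX}^{-1}$ playing the role of the positive line bundle used to produce the embedding, while keeping $\Ft_{\sfX}$ as the auxiliary bundle attached to the meromorphic function that cuts out $D$.

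First I would invoke the Fano hypothesis together with Kleiman's theorem to choose positive integers $\ell$ and $k$ such that $\Kt_{\sfX}^{-\ell}$ is very ample on $\sfX$ and $\Kt_{\sfX}^{\ell}\otimes\Ft_{\sfX}^{k}$ is positive; equivalently, $\Kt_{\sfX}^{-\ell}\otimes\Ft_{\sfX}^{-k}$ is the dual of a positive bundle. Both $\Kt_{\sfM}$ and $\Ft_{\sfM}$ carry canonical $CR$ structures on every $\sfM_{\ttt}$ which agree to infinite order with those of $\sfM_{0}$ on $\sfM\cap D$ -- intrinsically for $\Kt$, and by Proposition~\ref{p3.2} for $\Ft$ -- so the same holds for their tensor powers. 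The arguments of \S\ref{sec5.2}--\S\ref{sub5.3} then apply verbatim to yield
\begin{equation*}
H^{0,1}(\sfM_{\ttt},\Kt_{\sfM}^{-\ell N}\otimes\Ft_{\sfM}^{-kN})=0
\end{equation*}
for all sufficiently large $N$ and all $|\ttt|<\epi_{0}$, together with the uniform Sobolev bounds of Theorem~\ref{t5.5}.

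Next I would imitate \S\ref{sub6.1} step by step. Choose holomorphic sections $\sigmaup_{0},\hdots,\sigmaup_{m}$ of $\Kt_{\sfX}^{-\ell N}$ providing an embedding $\sfX\hookrightarrow\CP^{m}$; use a preliminary $\bar{\partial}$-solution to produce $\etaup_{\ttt}\in\cO_{\sfM_{\ttt}}(\sfM_{\ttt},\Ft_{\sfM}^{kN})$ with zero locus $D$; form $f_{i,\ttt}={\sfd''}_{\sfM_{\ttt}}(\etaup_{\ttt}^{-2}\sigmaup_{i})$, extended by zero across $D$, and solve ${\sfd''}_{\sfM_{\ttt}}\uq_{i,\ttt}=f_{i,\ttt}$ in $\Kt_{\sfM}^{-\ell N}\otimes\Ft_{\sfM}^{-kN}$, with solutions that go to zero in $\Cc^{1}$ as $\ttt\to 0$. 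Then $\sigmaup_{i,\ttt}=\sigmaup_{i}-\uq_{i,\ttt}\etaup_{\ttt}^{2}\in\cO_{\sfM_{\ttt}}(\sfM_{\ttt},\Kt^{-\ell N}_{\sfM_{\ttt}})$ is a small $\Cc^{1}$-perturbation of $\sigmaup_{i}$ and hence still defines a generic $CR$ embedding of $\sfM_{\ttt}$ into $\CP^{m}$ for $|\ttt|\ll{1}$.

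The hard part, and the genuinely new step compared with Theorem~\ref{main}, is to identify the ambient projective variety $\sfX_{\ttt}\subset\CP^{m}$ containing the image as \emph{Fano}. By \cite[Thm.5.2]{HNmero} the pseudoconcavity of $\sfM_{\ttt}$ forces every global $CR$ meromorphic function on $\sfM_{\ttt}$ to extend to a meromorphic function on an $(n{+}k)$-dimensional complex subvariety $\sfX_{\ttt}$ of $\CP^{m}$ containing $\sigmaup_{\ttt}(\sfM_{\ttt})$ as a generic $CR$ submanifold. Since the embedding is realized by sections of $\Kt^{-\ell N}_{\sfM_{\ttt}}$, the pullback of $\cO_{\CP^{m}}(1)$ to $\sfM_{\ttt}$ coincides with $\Kt^{-\ell N}_{\sfM_{\ttt}}$; using once more the extension of meromorphic transition data from $\sfM_{\ttt}$ to $\sfX_{\ttt}$, this lifts to an isomorphism $\cO_{\CP^{m}}(1)|_{\sfX_{\ttt}}\simeq\Kt^{-\ell N}_{\sfX_{\ttt}}$. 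Therefore $\Kt^{-\ell N}_{\sfX_{\ttt}}$, and hence $\Kt^{-1}_{\sfX_{\ttt}}$, is ample, so $\sfX_{\ttt}$ is Fano, as required.
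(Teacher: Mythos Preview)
Your proposal is correct and follows essentially the same route as the paper: use Kleiman's criterion to make $\Kt_{\sfX}^{\ell}\otimes\Ft_{\sfX}^{k}$ positive, run \S\ref{sub6.1} with the embedding sections taken in a power of the anticanonical bundle, and then invoke \cite{HNmero} to pass to the ambient variety $\sfX_{\ttt}$. Your write-up is in fact more explicit than the paper's on the final step, spelling out why $\cO_{\CP^{m}}(1)|_{\sfX_{\ttt}}\simeq\Kt_{\sfX_{\ttt}}^{-\ell N}$ (the paper simply says ``by construction''). One small bookkeeping slip: since $\etaup_{\ttt}\in\cO_{\sfM_{\ttt}}(\sfM_{\ttt},\Ft_{\sfM}^{kN})$ and $\sigmaup_{i}\in\cO(\sfX,\Kt_{\sfX}^{-\ell N})$, the form $f_{i,\ttt}=\sfd''_{\sfM_{\ttt}}(\etaup_{\ttt}^{-2}\sigmaup_{i})$ and its primitive $\uq_{i,\ttt}$ live in $\Kt_{\sfM}^{-\ell N}\otimes\Ft_{\sfM}^{-2kN}$, not $\Kt_{\sfM}^{-\ell N}\otimes\Ft_{\sfM}^{-kN}$; this does not affect the argument, as the required vanishing still follows from Theorem~\ref{t5.5} applied to the positive bundle $\Kt_{\sfX}^{\ell}\otimes\Ft_{\sfX}^{2k}$.
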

\par\bigskip
\vspace{0.5cm}
\noindent
{\small {\bf Acknowledgements.} 
The first author was supported by Deutsche 
For\-schungs\-ge\-mein\-schaft
(DFG, German Research Foundation, \mbox{grant BR 3363/2{-}2}).}

\bibliographystyle{amsplain}

\begin{thebibliography}{10}
\bibitem{AH} \textsc{A. Andreotti, C.D. Hill:} \emph{Complex characteristic coordinates and tangential Cauchy-Riemann equations.} Ann. Scuola Norm. Sup. Pisa \textbf{26}, 299--324 (1972)

\bibitem{Ak} \textsc{T. Akahori:} \emph{Deformation Theory of CR-Structures
and Its Application to
Deformations of Isolated Singularities I}, 
Advanced Studies in Pure Mathematics \textbf{25}, 
CR-Geometry and Overdetermined Systems,
pp. 1-40 (1997).

\bibitem{AMN}
\textsc{A. Altomani, C. Medori, and M. Nacinovich:}, \emph{The CR structure of minimal orbits in
complex flag manifolds}, J. Lie Theory \textbf{16}, no. 3, 483--530,  (2006).

\bibitem{AF} \textsc{A.Andreotti and G.A.Fredricks:} \emph{Embeddability of
real analytic {C}auchy-{R}iemann manifolds}, Ann. Scuola Norm. Sup. Pisa (4) \textbf{6}, 285--304, 
(1979).

\bibitem{AG}
\textsc{A. Andreotti and H. Grauert:} \emph{Theor\`emes de Finitude pour la cohomologie des
espaces complexes}, Bulletin de la Societ'e Math\'ematique de France, 
\textbf{90}, pp. 193--259  (1962).

\bibitem{AKQ}
\textsc{H. Azad, R. Kobayashi, M.N. Qureshi:} \emph{Quasi-potentials and Kähler–Einstein metrics on flag manifolds},
J. Algebra \textbf{196},  620--629 (1997).

\bibitem{AY}
\textsc{Au-Yeung, Yik-Hoi:}
\emph{On Matrices Whose Nontrivial Real Linear Combinations Are Nonsingular.}
 Proc. of the A.M.S. \textbf{29}, no. 1, 17--22 (1971).
 
\bibitem{Bo-Hi} 
\textsc{A. Borel,  F. Hirzebruch:} \emph{Characteristic classes and homogeneous spaces},
Amer. J. Math \textbf{80},
459--538 (1958).

\bibitem{BH} \textsc{J. Brinkschulte, C.D. Hill:} \emph{Non locally trivializable CR line bundles 
over compact Lorentzian CR manifolds} Ann. Inst. Fourier {\bf 68}, 101--108 (2018).

\bibitem{BH1} \textsc{J. Brinkschulte, C.D. Hill:} \emph{Inflexible $CR$ submanifolds}, Math. Z. {\bf 287}, 461--472 (2017).

\bibitem{BH2} \textsc{J. Brinkschulte, C.D. Hill:}  \emph{Flexible and inflexible $CR$ submanifolds}, 
Ark. Mat. {\bf 57}, 23--33 (2019).

\bibitem{BHN} \textsc{J. Brinkschulte, C.D. Hill, M. Nacinovich:} 
\emph{Obstructions to generic embeddings}. Ann. Inst. Fourier {\bf 52}, 1785--1792 (2002).

\bibitem{BHLN}  \textsc{J. Brinkschulte, C.D. Hill, J. Leiterer, M. Nacinovich:} 
\emph{Aspects of the Levi form.} to appear in B.U.M.I.

\bibitem{C} \textsc{\'E. Cartan:} \emph{Les syst\`emes diff\'erentiels exterieurs et leurs applications
g\'eom\'etriques}, Actualit\'es Scientifiques et Industriels, n$^{o}$ 994, Hermann, Paris, 1945.

\bibitem{CL} \textsc{D. Catlin, L. Lempert:} \emph{A note on the instability of embeddings of Cauchy-Riemann manifolds.} J. Geom. Analysis {\bf 2}, pp. 99--104  (1992).

\bibitem{D}
\textsc{G. Darboux:} \emph{Sur le probl\`eme de Pfaff}  Bull. Sci. Math. \textbf{6},  pp. 14-36, 49-68 (1882).

\bibitem{DBM} \textsc{P. De Bartolomeis and F. Meylan:}
\emph{Intrinsic deformation theory of $CR$ structures}.
Ann. Scuola Norm. Sup. Pisa Cl. Sci. (5) \textbf{IX}, pp.459--494 (2010).

\bibitem{FK} \textsc{G.B. Folland, J.J. Kohn:} \emph{The Neumann problem f
or the Cauchy-Riemann complex.} Ann. Math. Studies {\bf 75}, Princeton University Press, Princeton, N. J. (1972).

\bibitem{GH} \textsc{Ph. A. Griffiths, J. Harris:} \emph{Principles of Algebraic Geometry}, 
John Wiley \& Sons, New York, 1978.

\bibitem{He} \textsc{E. Hebey:} \emph{Sobolev spaces on Riemannian manifolds}, Lecture Notes
in Mathematics, v.1635, Springer-Verlag, Berlin, (1996).


\bibitem{HN1} \textsc{C.D. Hill, M. Nacinovich:} \emph{Pseudoconcave $CR$ manifolds.} 
Complex analysis and geometry (V. Ancona, E. Ballico, A. Silva, eds), Marcel Dekker, Inc., New York, 275--297 (1996).

\bibitem{HN} \textsc{C.D. Hill, M. Nacinovich:} \emph{A weak pseudoconcavity condition for abstract
almost CR manifolds.} Invent. math. {\bf 142}, 251--283 (2000).

\bibitem{HNmero} \textsc{C.D. Hill, M. Nacinovich:} \emph{Fields of CR meromorphic functions.} 
Rend. Sem. Mat. Univ. Padova {\bf 111}, 179--204 (2004).

\bibitem{HN2} \textsc{C.D. Hill, M. Nacinovich:} \emph{Non completely solvable systems of complex first order PDE's.} 
Rend. Sem. Mat. Univ. Padova {\bf 129}, 129--169 (2013).

\bibitem{HT} \textsc{C.D. Hill, G. Taiani:} \emph{Real analytic approximation of locally embeddable CR manifolds.} Compositio Math. {\bf 44}, 113--131 (1981).

\bibitem{H2} \textsc{L. H\"ormander:} \emph{An introduction 
to complex analysis in several complex variables.} North Holland Mathematical Library (1990).

\bibitem{Hu} \textsc{D. Husem\"oller:} \emph{Fibre Bundles}, 
Graduate Texts in Mathematics, Springer, New York, (1994).


\bibitem{K} \textsc{T. Kajisa:} \emph{On the existence of local frames 
for CR vector bundles.} Hokkaida Math. J. {\bf 42}, 121--130 (2013).

\bibitem{Kl} \textsc{S.L.Kleiman:} \emph{Towards a numerical theory of ampleness.}
Ann. of Math. {\bf 84}, 293-344, (1966).

\bibitem{KS} \textsc{K. Kodaira, D.C. Spencer:} \emph{On deformations 
of complex analytic structures, I-II, III.} 
Ann. of Math. {\bf 67}, 328--466 (1958); {\bf 71}, 43--76 (1960).

\bibitem{L}\textsc{L. Lempert:} \emph{Embeddings 
of three dimensional Cauchy-Riemann manifolds.} Math. Ann. {\bf 300}, 1--15 (1994).

\bibitem{MN} \textsc{C. Medori, M. Nacinovich:} \emph{Pluriharmonic functions
on abstract $CR$ manifolds.} Ann. Mat. Pura e Appl. (IV) \textbf{CLXX}, 377--394 (1996).

\bibitem{Ni} \textsc{L. Nirenberg:} \emph{On a problem of Hans Lewy.} Uspeki Math. Naut. {\bf 292}, pp. 241--251 (1974).

\bibitem{NN}
\textsc{A. Newlander and L. Nirenberg:}  \emph{Complex analytic coordinates in almost complex manifolds.}
 Annals of Mathematics. \textbf{66}, pp. 391-404
(1957).

\bibitem{P} \textsc{P.L. Polyakov} \emph{Versal embeddings of compact 3-pseudoconcave CR submanifolds.} 
Math. Z. {\bf 248}, 267--312 (2004).

\bibitem{R} \textsc{H. Rossi:} \emph{Attaching analytic spaces to an analytic space along a pseudoconcave boundary,} Proc. Conf. Complex Manifolds (Minneapolis), 1964, Springer-Verlag, New York, pp. 242--256 (1965).

\bibitem{Wolf69}
J.~A. Wolf, \emph{The action of a real semisimple group on a complex flag
  manifold. {I}. {O}rbit structure and holomorphic arc components}, Bull.
  Amer.Math. Soc. \textbf{75} (1969), 1121--1237.
\end{thebibliography}
\renewcommand{\MR}[1]{}
\providecommand{\bysame}{\leavevmode\hbox to3em{\hrulefill}\thinspace}
\providecommand{\MR}{\relax\ifhmode\unskip\space\fi MR }

\end{document}